\documentclass[hidelinks,onefignum,onetabnum]{siamart251216}

\usepackage{amsmath,amssymb}
\usepackage{algpseudocode}
\usepackage{graphicx}
\usepackage{xcolor}
\usepackage{enumitem}
\usepackage{multirow}
\IfFileExists{todonotes.sty}{\usepackage{todonotes}}{}
\usepackage{accents}
\newsiamthm{idea}{Idea}
\newsiamthm{assumption}{Assumption}

\newcommand{\xlb}{\underline{x}}
\newcommand{\xub}{\overline{x}}
\newcommand{\xtil}{\tilde{x}}
\newcommand{\xltil}{\underaccent{\tilde}{x}}
\newcommand{\cA}{\mathcal{A}}
\newcommand{\cO}{\mathcal{O}}

\headers{Universal and Parameter-free Gradient Sliding}{Y. Wu, Y. Ouyang, Z. Zhang, and Q. Luo}

\title{Universal and Parameter-free Gradient Sliding for Composite Optimization\thanks{Submitted to the editors on \today.\funding{Wu, Ouyang, and Luo are partially supported by the AFOSR grant FA9550-25-1-0278; Zhang is partially supported by N00014-20-1-2089}}}
\author{Yan Wu\thanks{Department of Industrial Engineering, Clemson University, Clemson, SC}
\and Yuyuan Ouyang\thanks{School of Mathematical and Statistical Sciences, Clemson University, Clemson, SC}
\and Zhe Zhang\thanks{Edwardson School of Industrial Engineering, Purdue University, West Lafayette, IN}
\and Qi Luo\thanks{Department of Business Analytics, Tippie College of Business, University of Iowa, Iowa City, IA (\email{qi-luo-1@uiowa.edu}).  Corresponding author.}}

\ifpdf
\hypersetup{
  pdftitle={Universal and Parameter-free Gradient Sliding for Composite Optimization},
  pdfauthor={Yan Wu, Yuyuan Ouyang, Zhe Zhang, and Qi Luo}
}
\fi

\begin{document}

\maketitle

\begin{abstract}
We propose a Parameter-Free Universal Gradient Sliding (PFUGS) algorithm for computing an approximate solution to the convex composite optimization $\min_{x\in X} \{f(x) + g(x)\}$, where $f$ has $(M_\nu,\nu)$-H\"older continuous subgradient and $g$ has $L$-Lipschitz continuous gradient. 
PFUGS  computes an $\varepsilon$-approximate solution with $\mathcal{O}((M_\nu/\varepsilon)^{{2}/{(1+3\nu)}})$ evaluations of (sub)gradients of $f$  and $\mathcal{O}((L/\varepsilon)^{1/2})$ evaluations of gradients of $g$,  without  prior knowledge of problem constants. 
To the best of our knowledge, PFUGS is the first gradient sliding algorithm for problems involving two functions whose distinct problem constants are both unknown a priori.
\end{abstract}
\begin{keywords}
composite optimization, parameter-free method, universal gradient sliding 
\end{keywords}
\begin{MSCcodes}
90C25, 90C06, 49M37
\end{MSCcodes}
\section{Introduction}
In this paper, we consider a class of composite problems
\begin{equation}
    \label{eq:problem_of_interest}
    \min_{x\in X} f(x)+g(x),
\end{equation}
where $X$ is a simple closed convex set,
$f: \mathbb{R}^n\to \mathbb{R}$ is convex and possibly non-smooth, $g: \mathbb{R}^n\to \mathbb{R}$ is convex and smooth. In addition, we assume that for a pre-specified norm $\|\cdot\|$, there exists $\nu\in [0,1]$ and $M_\nu, L>0$ such that for all $x,y\in\mathbb{R}^n$,
\begin{align}
    \label{eq:holder_descent}
    & f(y) - f(x) - \langle f'(x), y - x\rangle \le \tfrac{M_\nu}{1+\nu}\|y -x\|^{1+\nu},\ \forall f'(x)\in\partial f(x),\text{ and }
    \\
    \label{eq:L}
    & g(y) - g(x) - \langle \nabla g(x), y - x\rangle \le \tfrac{L}{2}\|y -x\|^2.
\end{align}

In convex optimization literature, condition \eqref{eq:holder_descent} is known as the H\"older condition for the (sub)gradient $f'$ with constant $M_\nu$ and exponent $\nu$ (see, e.g., \cite{nesterov2015universal,lan2015bundle}). This assumption covers cases in which $f$ could behave as a nonsmooth function (with $\nu=0$), a smooth function (with $\nu=1$), or exhibit intermediate behavior between nonsmooth and smooth functions (with $\nu\in (0,1)$); the latter case is also referred to as weakly smooth. Condition \eqref{eq:L} is known as the Lipschitz condition for the gradient $\nabla g$ with constant $L$. In some literature, it is also called the $L$-smoothness condition.

We study the composite convex optimization problem due to its broad applicability.
For example, in sparse regression and total variation image reconstruction, $g$ represents a data fidelity term, while $f$ encodes structural regularization regarding prior information about desired solutions. 
When $f$ is proximal-friendly,
the problem has been extensively studied  \cite{nesterov2012gradient}. 
In this setting, algorithms for solving problem \eqref{eq:problem_of_interest} are often referred to as proximal algorithms
\cite{parikh2014proximal}, and 
an $\varepsilon$-solution can be obtained within $\cO(\sqrt{L/\varepsilon})$ gradient evaluations of $g$, which matches the complexity bound for unconstrained smooth  convex optimization.
Hence, adding a proximal-friendly term $f$ does not increase the first-order oracle complexity.
Many parameter-free proximal algorithms for solving \eqref{eq:problem_of_interest} without requiring \emph{prior knowledge} of problem parameters  have been proposed.
In particular, \cite{nesterov2015universal} 
generalizes the assumption on $g$ by relaxing $L$-smoothness to 
H\"older continuity of its (sub)gradient, and proposes a parameter-free proximal algorithm that achieves $\cO((L_\nu/\varepsilon)^{2/(1+3\nu)})$ (sub)gradient evaluations of $g$, where $\nu$ and $L_\nu$ denote the exponent and constant in the  H\"older condition.

The problem becomes significantly more challenging when $f$ is no longer proximal-friendly.
In this case, standard first-order methods with 
linear approximation of $f$
often yield unfavorable (sub)gradient evaluation complexity. 
For example, when $f$ is nonsmooth with $\nu=0$, the subgradient method 
requires $\cO((M_{\nu=0}+L)^2/\varepsilon^2)$ (sub)gradient evaluations of both $f$ and $g$.
The Gradient Sliding (GS) method addresses this issue by 
 exploiting the composite structure of \eqref{eq:problem_of_interest}  and carefully balancing the computational effort devoted to evaluating (sub)gradients of $f$ and $g$:
(1) When $f$ is nonsmooth and Lipschitz continuous with constant $M_{\nu=0}$, the GS method in \cite{lan2016gradient} 
achieves $\cO(\sqrt{L/\varepsilon})$ gradient evaluations of $g$ and $\cO(({M_{\nu=0}/\varepsilon)^2})$ subgradient evaluations of $f$;
(2) When $f$ is smooth with constant $M_{\nu=1}$, an accelerated GS in \cite{lan2022accelerated} 
achieves $\cO(\sqrt{L/\varepsilon})$ gradient evaluations of $g$ and $\cO(\sqrt{M_{\nu=1}/\varepsilon})$ gradient evaluations of $f$. 
These results are obtained by separating the (sub)gradient complexities associated with the two functions in the objective, yielding optimal complexity bounds for the respective function classes.
The central idea of GS is,  at each iteration, linearizing a fixed component in the objective while approximately solving the resulting subproblem with respect to the other component via an inner first-order method.
The key design is the
choice of the number of inner iterations: 
too few leads to excessive gradient evaluations of $g$, while too many increases the computational cost of $f$.
A central contribution of GS methods 
is a proper design of number of inner iterations based on problem constants, thereby attaining optimal and separated (sub)gradient complexities for both components.
As a result, when problem parameters are known, GS can substantially outperform standard methods for composite convex optimization.

There has been a growing interest in parameter-free first-order methods, namely, algorithms that adjust to unknown problem constants 
while retaining strong complexity guarantees. Throughout this paper we use the term ``parameter-free'' to describe any algorithm that does not need knowledge of any problem constants (e.g. H\"older constant and exponent) other than the desired accuracy threshold $\varepsilon$. This line of work includes
methods based on Lipschitz constant/H\"older constants/local curvature estimation \cite{nesterov2015universal,malitsky2020adgd,malitsky2024adapgm,lan2026optimal,li2025simple,lan2024projected,lan2024acpdhg,deng2026uniformly,suh2025adanag,borodich2025graal,lu2023pfcg,lu2023llcg,lu2025pde} and bundle-type or bundle-level-type methods \cite{monteiro2016adaptive,sujanani2025parameterfree,guigues2026universal,guigues2024adaptivebundle,monteiro2024pfbundle,ChenLanOuyangZhang2019bundle,lan2015bundle,chen2020acceleration}. 
Nevertheless, existing works do not address the challenge of balancing computational effort across multiple objective components in \eqref{eq:problem_of_interest} with
different and unknown problem constants. This issue is acute for parameter-free GS methods, where inner and outer iterations must be coordinated 
to preserve the desired separated (sub)gradient evaluation complexities.

\subsection{Challenges of parameter-free GS and our contributions}
Existing GS methods attain optimal complexity bounds only when  problem constants are known a priori.
To the best of our knowledge, the only parameter-free GS-type method with optimal complexity bounds is \cite{ouyang2023universal}, which 
assumes that only one function has an unknown problem constant. 
Our work aims to develop the first fully parameter-free GS method for composite optimization. 

Designing such methods presents two main challenges. 
First, incorporating backtracking linesearch into GS is nontrivial.
In standard first-order optimization, e.g., Accelerated Gradient Descent (AGD), 
backtracking linesearch adaptively estimates one local constant per iterate.
In the GS setting, however, the parameter estimates associated with different components in the objective may evolve at different rates across iterations.
Since optimal complexities rely critically on choosing the appropriate number of inner iterations, maintaining the required balance 
becomes difficult when both estimates change dynamically.
Second, failed backtracking steps are substantially more costly in GS. 
A failed trial in standard first-order optimization incurs only a constant amount of extra (sub)gradient evaluations.
By contrast, in the GS framework, each failed outer constant trial may involve a large number of inner iterations. Specifically, 
if the outer constant is underestimated, then the resulting number of inner iterations may be severely overestimated, and such linesearch failure may destroy the desired complexity bound for the inner component.
On the other hand, if the outer constant is overestimated, the overly conservative estimate leads to 
unnecessarily small progress per outer iteration and thus degrades the overall efficiency. 

In this paper, we address the above issues and develop the first parameter-free GS framework for convex composite optimization.
Our contributions are threefold.
First, we propose a new linesearch-compatible sliding subroutine that determines the number of inner iterations dynamically rather than fixing it in advance.
This allows the method to adapt to the varying local estimates while preserving the balance required by the GS scheme.
Second, we 
combine this subroutine with outer backtracking linesearch. We first identify necessary conditions under which a naive backtracking strategy preserves the desired complexity bounds.
We then develop a new outer backtracking strategy that is \emph{fully} parameter-free and requires no prior information about any problem constants.
This strategy is designed to avoid both under- and over-estimation of the constants, 
which preserves the balance between the outer and inner computational efforts,
recovering the optimal complexity bounds of GS with known parameters.
Third, by incorporating the universal gradient method in  
\cite{nesterov2015universal}, we develop a Parameter-Free Universal Gradient Sliding (PFUGS) algorithm in the most general setting, where 
$f$ can be nonsmooth ($\nu=0$), smooth ($\nu=1$), or weakly smooth ($\nu\in(0,1)$) without requiring any knowledge of problem constants. 

\subsection{Organization and main results}
The remainder of the paper is organized as follows.
Section~\ref{sec:GDS} revisits the GS method under a simplified assumption that $f$ is $M$-smooth, which 
highlights the main technical challenges in the unknown-parameter setting.
Section~\ref{sec:GDS_linesearch} illustrates our strategies and develops a parameter-free GS framework via a modified sliding subroutine and outer linesearch schemes.
Section \ref{sec:universal} extends these parameter-free strategies to more general 
settings and includes acceleration to achieve optimal complexities. 
Section \ref{sec:PFUGS} proposes the full parameter-free algorithm PFUGS and demonstrates its optimal complexity.
Section~\ref{sec:numerical_experiments} presents numerical experiments demonstrating the performance of PFUGS. 
Section \ref{sec:conclusion} concludes our paper with potential directions for future research.
For better readability, some of the proofs are postponed to the appendix.

\section{Preliminary: Gradient Descent Sliding (GDS)}\label{sec:GDS}
In this section, we review the GS technique
through a simplified gradient-descent-sliding scheme and state at a high level the difficulties that arise when the problem constants of $f$ and $g$ are unknown.
The material and analysis closely follow \cite{lan2016gradient}; we intentionally restrict attention to non-accelerated gradient-descent-type schemes to keep the exposition compact. Specifically, we consider the following algorithmic design question: suppose that $f$ is $M$-smooth and $g$ is $L$-smooth. 
Naive gradient descent requires $\cO((M+L)/\varepsilon)$ gradient evaluations of both $f$ and $g$ to compute an $\varepsilon$-solution. If $M\ge L$, this may introduce an additional and potentially unnecessary $\cO((M+L)/\varepsilon)$ gradient evaluations of $\nabla g$. Can we design a scheme that reduces the number of $\nabla g$  evaluations to $\cO(L/\varepsilon)$? Moreover, can we do so in a parameter-free manner, without prior knowledge of $M$ and $L$, by estimating them on the fly? In this section, we revisit the answer in \cite{lan2016gradient} to the first question and highlight the technical challenges in the second question. To the best of our knowledge, the second question is still open in the literature.

To keep the exposition compact and focused, we make the following simplifications in this section and Section \ref{sec:GDS_linesearch}.
We assume that $\|\cdot\|$ is the Euclidean norm and that $f$ is $M$-smooth, which is consistent with the H\"older \eqref{eq:holder_descent} assumption with exponent $\nu=1$ and constant $M_{\nu=1}$.
For notational convenience,
we drop the subscript in $M$ in these two sections. 
We also assume, without loss of generality, that \(M\ge L\), since the case
\(M\le L\) can be handled by swapping the roles of the two functions.
We refer to the method described in Algorithm \ref{alg:GDS} as GDS. 
To streamline the exposition, we omit the proofs of Lemmas~\ref{thm:GDS_outer_convergence} and~\ref{thm:GDS_subroutine_convergence}, which can be derived from  \cite{lan2016gradient}. Complete proofs are provided in the arXiv version\footnote{\url{https://arxiv.org/abs/2603.23492}} of this paper.
\begin{algorithm}[h]
\caption{Main Procedure for GDS}
\label{alg:GDS}
\begin{algorithmic}[1]
\State Initialization: $\tilde x_0 = x_0\in X$
\For{$k=1,2,\dots, N$}
\State Set $\eta_k = L_k$, approximate $g(x)$ by $\phi_k(x):=g(\xtil_{k-1})+\langle \nabla g(\tilde x_{k-1}), x-\xtil_{k-1}\rangle + ({\eta_k}/{2})\|x - \tilde x_{k-1}\|^2$,
and call subroutine $(x_k, \tilde x_k)=\mathcal{A}(x_{k-1}, \nabla g(\xtil_{k-1}), \eta_{k}, T_k)$ to compute an approximate solution $(x_k, \tilde x_k)$ to the subproblem
\begin{align}
    \label{eq:GDS_subproblem}
    &\min_{x\in X} \phi_k(x) + f(x)
\end{align}
with approximate optimality condition $\langle \nabla\phi_k(\xtil_k), \xtil_k - x\rangle + f(\xtil_k) - f(x) \le \varepsilon_k(x)$ for all $x\in X$\label{state:GDS_subproblem_optimality}
\EndFor
\State Output
$\xub_N = (\sum\nolimits_{k=1}^{N} 1/\eta_k)^{-1}\sum\nolimits_{k=1}^{N} \tilde x_k/\eta_k$
\end{algorithmic}
\end{algorithm}

In Algorithm \ref{alg:GDS}, $\varepsilon_k(x)$ 
denotes the accuracy in solving subproblem \eqref{eq:GDS_subproblem}. 
If $\varepsilon_k(x)\equiv 0$, $\xtil_k$ is an exact solution of \eqref{eq:GDS_subproblem} and Algorithm \ref{alg:GDS} reduces to the proximal gradient method. 
The convergence property of Algorithm \ref{alg:GDS} is stated below.
\begin{lemma}\label{thm:GDS_outer_convergence}
Suppose that the iterates $\tilde x_k$'s in Algorithm \ref{alg:GDS} satisfy for any $k\ge1$,
\begin{align}
&g(\xtil_k)
    \le
    g(\tilde x_{k-1})
    + \langle \nabla g(\tilde x_{k-1}), \tilde x_k - \tilde x_{k-1} \rangle
    + \tfrac{L_k}{2}\|\tilde x_k - \tilde x_{k-1}\|^2.
    \label{eq:GDS_g_ineq}
\end{align}
Then, for any $x\in X$, we have
\begin{equation*}
    \begin{aligned}
    & g(\xub_N) + f(\xub_N)-g(x)-f(x)
    \\
    \le &
    (\sum\nolimits_{k=1}^N\tfrac{1}{\eta_k})^{-1}
    \sum\nolimits_{k=1}^N\tfrac{\varepsilon_k(x)}{\eta_k}
    +
    (2\sum\nolimits_{k=1}^N\tfrac{1}{\eta_k})^{-1}
    (\|x-\xtil_{0}\|^2 -\|x-\xtil_N\|^2).
     \end{aligned}
\end{equation*}
\end{lemma}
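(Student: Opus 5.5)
The argument is the standard one-step inequality for the (inexact) proximal gradient method, summed with weights $1/\eta_k$ and then closed by convexity of $f+g$ at the weighted average $\xub_N$.

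\textbf{Step 1: one-step bound.} Fix $k$ and $x\in X$. Since $\eta_k=L_k$, inequality \eqref{eq:GDS_g_ineq} says exactly that $g(\xtil_k)\le \phi_k(\xtil_k)$. Because $\phi_k$ is a quadratic with Hessian $\eta_k I$, for any $x$ we have the identity
\[
\phi_k(x)=\phi_k(\xtil_k)+\langle\nabla\phi_k(\xtil_k),x-\xtil_k\rangle+\tfrac{\eta_k}{2}\|x-\xtil_k\|^2 .
\]
Combining this with the approximate optimality condition in Line~\ref{state:GDS_subproblem_optimality}, namely $\langle \nabla\phi_k(\xtil_k),\xtil_k-x\rangle\le f(x)-f(\xtil_k)+\varepsilon_k(x)$, gives
\[
g(\xtil_k)+f(\xtil_k)\le \phi_k(x)+f(x)-\tfrac{\eta_k}{2}\|x-\xtil_k\|^2+\varepsilon_k(x).
\]

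\textbf{Step 2: remove the linearization of $g$.} By convexity of $g$, $g(\xtil_{k-1})+\langle\nabla g(\xtil_{k-1}),x-\xtil_{k-1}\rangle\le g(x)$. Hence $\phi_k(x)\le g(x)+\tfrac{\eta_k}{2}\|x-\xtil_{k-1}\|^2$, and therefore
\[
g(\xtil_k)+f(\xtil_k)-g(x)-f(x)\le \varepsilon_k(x)+\tfrac{\eta_k}{2}\bigl(\|x-\xtil_{k-1}\|^2-\|x-\xtil_k\|^2\bigr).
\]

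\textbf{Step 3: weight, sum, and average.} Divide by $\eta_k$ and sum over $k=1,\dots,N$. The distance terms telescope to $\tfrac12(\|x-\xtil_0\|^2-\|x-\xtil_N\|^2)$. Then divide by $\sum_k 1/\eta_k$ and apply Jensen's inequality to the convex function $f+g$ at $\xub_N$, which is a convex combination of the $\xtil_k$ with weights proportional to $1/\eta_k$. This yields exactly the stated bound.

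\textbf{Main point to check.} The only delicate part is that the weights must be $1/\eta_k$, not constant. Dividing the one-step inequality by $\eta_k$ is what makes the distance terms carry a common coefficient $\tfrac12$, so that they telescope even though $\eta_k$ varies with $k$. The remaining steps follow directly.
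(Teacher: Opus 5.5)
Your proposal is correct and takes essentially the same approach as the paper. Both derive the one-step bound $g(\xtil_k)+f(\xtil_k)-g(x)-f(x)\le\varepsilon_k(x)+\tfrac{\eta_k}{2}(\|x-\xtil_{k-1}\|^2-\|x-\xtil_k\|^2)$ from the approximate optimality condition, \eqref{eq:GDS_g_ineq} with $\eta_k=L_k$, and convexity of $g$, then weight by $1/\eta_k$, telescope, and apply convexity at $\xub_N$. The only difference is cosmetic: you use the exact quadratic expansion of $\phi_k$ together with $g(\xtil_k)\le\phi_k(\xtil_k)$, whereas the paper expands $\nabla\phi_k(\xtil_k)$ and applies the three-point identity directly.
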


The above lemma shows that, with properly chosen stepsize $\eta_k$ and subproblem error $\varepsilon_k(x)$, the output $\xub_N$ becomes an $\varepsilon$-solution.
To solve subproblem \eqref{eq:GDS_subproblem} with a suitable error $\varepsilon_k(x)$, 
Algorithm~\ref{alg:GDS} invokes
subroutine $\mathcal A$.
In view of the structure of subproblem \eqref{eq:GDS_subproblem}, it is natural to employ a proximal gradient method as the subroutine. Algorithm~\ref{alg:GDS_subroutine} presents one such implementation of $\mathcal A$.
\begin{algorithm}[h]
    \caption{Proximal-Gradient Subroutine 
    $\mathcal{A}$
    }
    \label{alg:GDS_subroutine}
    \begin{algorithmic}[1]
        \State Input: $x_{k-1}, \nabla g(\xtil_{k-1}), \eta_{k}, T_k$
        \State Set $x_k^0 = x_{k-1}$
        \For{$t=1,\dots,T_k$}
        \State Set $p_k^t = M_k^t$ and compute
        \vspace{-5pt}
\begin{equation}\label{eq:GDS_subroutine_update}
            x_k^t = \arg\min \limits_{x\in X} \langle \nabla g(\xtil_{k-1}) + \nabla f(x_k^{t-1}), x\rangle + \tfrac{\eta_k}{2}\|x-\xtil_{k-1}\|^2+\tfrac{p_k^t}{2}\|x-x_k^{t-1}\|^2
        \end{equation}
        \vspace{-10pt}
        \EndFor
        \State Output $x_k = x_k^{T_k}$ and $\xtil_k = (\sum\nolimits_{\tau=1}^{T_k} 1/p_k^\tau )^{-1}\sum\nolimits_{t=1}^{T_k} x_k^t/p_k^t $
    \end{algorithmic}
\end{algorithm}

The main idea of GS is already evident from Algorithms~\ref{alg:GDS} and \ref{alg:GDS_subroutine}. Specifically, the total numbers of gradient evaluations of $\nabla g$ and $\nabla f$ are $N$ and $\sum_{k=1}^{N} T_k$, respectively. Suppose that the Lipschitz constants $L$ and $M$ are known, and that $L_k\equiv L$ and $M_k^t\equiv M$. If, ideally, $T_k\equiv\infty$, then $\varepsilon_k(x)\equiv 0$, and Algorithm~\ref{alg:GDS} reduces to a proximal gradient method with $\cO(L/\varepsilon)$ evaluations of $\nabla g$ but infinitely many gradient evaluations of $\nabla f$. If instead $T_k\equiv1$, then $x_k=\xtil_k$, and Algorithm~\ref{alg:GDS} reduces to projected gradient descent with gradient complexity $\cO((M+L)/\varepsilon)$ for both $\nabla g$ and $\nabla f$. The goal of GS is therefore to choose $T_k>1$ appropriately so as to attain gradient complexity $\cO(L/\varepsilon)$ for $\nabla g$ while preserving gradient complexity $\cO((M+L)/\varepsilon)$ for $\nabla f$. To determine such a choice of $T_k$, we next study the convergence behavior of Subroutine~\ref{alg:GDS_subroutine} and its dependence on $T_k$.
\begin{lemma}\label{thm:GDS_subroutine_convergence}
Fix any $x\in X$, and $k\ge1$. Suppose that iterates $x_k^t$'s in Subroutine \ref{alg:GDS_subroutine} satisfy for any $t\ge1$,
\begin{equation}\label{eq:GDS_f_ineq}
\begin{aligned}
    &f(x_k^t)
    \le
    f(x_k^{t-1})
    + \langle \nabla f(x_k^{t-1}), x_k^t - x_k^{t-1} \rangle
    + \tfrac{M_k^t}{2}\|x_k^t - x_k^{t-1}\|^2.
\end{aligned}
\end{equation}
The approximate solution $(x_k,\tilde x_k)$ produced by Subroutine \ref{alg:GDS_subroutine} satisfies the approximate optimality condition described in Algorithm \ref{alg:GDS} with $\varepsilon_k(x) = (2\sum\nolimits_{\tau=1}^{T_k} 1/p_k^\tau )^{-1}(\|x-x_{k-1}\|^2-\|x-x_k\|^2)$ after $T_k$ iterations.
\end{lemma}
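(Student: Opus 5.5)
The plan is the standard one-step analysis of a proximal gradient iteration, summed with weights $1/p_k^t$ and then combined by convexity. Write $h_k(x) := \langle \nabla g(\xtil_{k-1}), x\rangle + \tfrac{\eta_k}{2}\|x-\xtil_{k-1}\|^2$. This agrees with $\phi_k$ up to a constant, so $\nabla\phi_k = \nabla h_k$. The function $h_k$ is $\eta_k$-strongly convex.

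\textbf{Step 1 (one-step inequality).} Fix $t$. The update \eqref{eq:GDS_subroutine_update} minimizes $\langle \nabla f(x_k^{t-1}), x\rangle + h_k(x) + \tfrac{p_k^t}{2}\|x-x_k^{t-1}\|^2$ over $X$. Its optimality condition, together with the three-point identity for $\tfrac{p_k^t}{2}\|\cdot - x_k^{t-1}\|^2$, gives for every $x\in X$
\begin{equation*}
\langle \nabla f(x_k^{t-1}) + \nabla h_k(x_k^t), x_k^t - x\rangle \le \tfrac{p_k^t}{2}\bigl(\|x-x_k^{t-1}\|^2 - \|x-x_k^t\|^2 - \|x_k^t-x_k^{t-1}\|^2\bigr).
\end{equation*}

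\textbf{Step 2 (pass to function values).} By \eqref{eq:GDS_f_ineq} with $p_k^t = M_k^t$, and by convexity of $f$ at $x_k^{t-1}$,
\begin{equation*}
f(x_k^t) - f(x) \le \langle \nabla f(x_k^{t-1}), x_k^t - x\rangle + \tfrac{p_k^t}{2}\|x_k^t - x_k^{t-1}\|^2 .
\end{equation*}
Adding this to Step 1 cancels the $\|x_k^t-x_k^{t-1}\|^2$ terms. We obtain
\begin{equation*}
\langle \nabla h_k(x_k^t), x_k^t - x\rangle + f(x_k^t) - f(x) \le \tfrac{p_k^t}{2}\bigl(\|x-x_k^{t-1}\|^2-\|x-x_k^t\|^2\bigr).
\end{equation*}

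\textbf{Step 3 (weighted sum and averaging).} Divide by $p_k^t$ and sum over $t=1,\dots,T_k$. The right-hand side telescopes to $\tfrac12(\|x-x_{k-1}\|^2 - \|x-x_k\|^2)$, using $x_k^0 = x_{k-1}$ and $x_k^{T_k} = x_k$.

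The main point is to recognize the left-hand side as a convex function of $x_k^t$ evaluated at the weighted average $\xtil_k$. Define $\psi(y) := \langle \nabla h_k(y), y - x\rangle + f(y) - f(x)$ for fixed $x$. Since $\nabla h_k(y) = \nabla g(\xtil_{k-1}) + \eta_k(y-\xtil_{k-1})$, we can write
\begin{equation*}
\psi(y) = \langle \nabla g(\xtil_{k-1}), y-x\rangle + \eta_k\langle y-\xtil_{k-1}, y-x\rangle + f(y) - f(x).
\end{equation*}
The middle term is a convex quadratic in $y$, since its Hessian is $2\eta_k I$. Hence $\psi$ is convex, and Jensen's inequality with weights $(1/p_k^t)/\sum_\tau (1/p_k^\tau)$ gives $\psi(\xtil_k) \le \bigl(\sum_\tau 1/p_k^\tau\bigr)^{-1}\sum_t \psi(x_k^t)/p_k^t$.

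Combining the telescoped sum with this Jensen bound yields $\langle \nabla\phi_k(\xtil_k), \xtil_k - x\rangle + f(\xtil_k) - f(x) \le \varepsilon_k(x)$ with the stated $\varepsilon_k(x)$. I expect the only delicate step to be this convexity check of $\psi$, which holds because $\phi_k$ is quadratic. The remaining steps are routine.
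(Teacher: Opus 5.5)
Your proof is correct and follows the paper's argument. You combine the descent inequality with convexity of $f$, apply the three-point inequality to the prox step, weight by $1/p_k^t$ and telescope, then use convexity to pass to the weighted average $\tilde{x}_k$. The only cosmetic difference is in the $\eta_k$-term: the paper expands it into squared distances by applying the three-point lemma to both prox terms, and recombines them into $\langle \nabla\phi_k(\tilde{x}_k), \tilde{x}_k - x\rangle$ at the end. You instead keep it as $\langle \nabla h_k(x_k^t), x_k^t - x\rangle$ and verify convexity of $\psi$ directly, which is the same computation.
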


With the help of Lemmas \ref{thm:GDS_outer_convergence} and \ref{thm:GDS_subroutine_convergence},
we can now state the overall convergence of Algorithm \ref{alg:GDS} with Subroutine \ref{alg:GDS_subroutine}.
\begin{theorem}\label{thm:GDS_complexity}
    Under conditions of Lemmas \ref{thm:GDS_outer_convergence} and \ref{thm:GDS_subroutine_convergence}, we have
\begin{equation}
\label{eq:GDS_general_complexity}
\begin{aligned}
        & g(\xub_N) + f(\xub_N)-g(x)-f(x)
        \\
    \le &
    (\sum\nolimits_{k=1}^N\tfrac{1}{\eta_k})^{-1}
    \sum\nolimits_{k=1}^N(2\eta_k\sum\nolimits_{\tau=1}^{T_k} \tfrac{1}{p_k^\tau} )^{-1}(\|x-x_{k-1}\|^2-\|x-x_k\|^2)
    \\
    &+ (2\sum\nolimits_{k=1}^N\tfrac{1}{\eta_k})^{-1}
    (\|x-\xtil_{0}\|^2 -\|x-\xtil_N\|^2).
\end{aligned}
    \end{equation}
    If $T_k$ is chosen so that $\eta_k\sum\nolimits_{\tau=1}^{T_k} 1/p_k^\tau =1/a$ remains constant for some $a>0$, then
\begin{equation}\label{eq:GDS_Tk1_complexity}
        g(\xub_N) + f(\xub_N)-g(x)-f(x)
        \le
        (2\sum\nolimits_{k=1}^N\tfrac{1}{\eta_k})^{-1}(a+1)\|x-x_0\|^2.
    \end{equation}
\end{theorem}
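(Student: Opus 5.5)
The first bound \eqref{eq:GDS_general_complexity} follows by substituting one lemma into the other. Lemma~\ref{thm:GDS_subroutine_convergence} says that Subroutine~\ref{alg:GDS_subroutine} solves subproblem \eqref{eq:GDS_subproblem} to the optimality condition of Algorithm~\ref{alg:GDS}, with
\[
\varepsilon_k(x) = \Bigl(2\sum\nolimits_{\tau=1}^{T_k} 1/p_k^\tau\Bigr)^{-1}\bigl(\|x-x_{k-1}\|^2-\|x-x_k\|^2\bigr).
\]
Since \eqref{eq:GDS_g_ineq} also holds, the hypotheses of Lemma~\ref{thm:GDS_outer_convergence} are met. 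I will plug this expression for $\varepsilon_k(x)$ into the first term of that lemma. The factor $\varepsilon_k(x)/\eta_k$ becomes $(2\eta_k\sum_{\tau} 1/p_k^\tau)^{-1}(\|x-x_{k-1}\|^2-\|x-x_k\|^2)$, which is exactly the summand in \eqref{eq:GDS_general_complexity}. The second term carries over from Lemma~\ref{thm:GDS_outer_convergence} unchanged, so this step is purely algebraic.

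For \eqref{eq:GDS_Tk1_complexity}, impose $\eta_k\sum_{\tau=1}^{T_k}1/p_k^\tau = 1/a$. Then each coefficient $(2\eta_k\sum_\tau 1/p_k^\tau)^{-1}$ equals $a/2$, independent of $k$. The sum over $k$ telescopes to
\[
\tfrac a2\bigl(\|x-x_0\|^2-\|x-x_N\|^2\bigr)\le \tfrac a2\|x-x_0\|^2.
\]
The prefactor is $(\sum_k 1/\eta_k)^{-1}$, so the first term is at most $(2\sum_k 1/\eta_k)^{-1}a\|x-x_0\|^2$.

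For the second term, use the initialization $\tilde x_0=x_0$ and drop the nonpositive $-\|x-\xtil_N\|^2$. This gives at most $(2\sum_k 1/\eta_k)^{-1}\|x-x_0\|^2$. Adding the two bounds yields the factor $(a+1)$.

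The only point needing care is the telescoping step. The subroutine's error at outer iteration $k$ is measured relative to $x_{k-1}$ and $x_k$, which are the subroutine's starting point and output (Algorithm~\ref{alg:GDS_subroutine} sets $x_k^0=x_{k-1}$ and outputs $x_k=x_k^{T_k}$), not the averaged point $\tilde x_k$. The sum telescopes only because each outer call is initialized at the previous call's output $x_{k-1}$. This is also where the constancy of $\eta_k\sum_\tau 1/p_k^\tau$ is essential: with varying weights the sum would not collapse. Beyond checking that consistency, I expect no real obstacle.
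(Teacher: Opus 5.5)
Your proposal is correct and matches the paper's proof. Substitute the subroutine error $\varepsilon_k(x)$ from Lemma~\ref{thm:GDS_subroutine_convergence} into Lemma~\ref{thm:GDS_outer_convergence}, telescope using the constant coefficient $a/2$, set $\xtil_0=x_0$, and drop the nonpositive terms $-\|x-x_N\|^2$ (this uses $a>0$) and $-\|x-\xtil_N\|^2$.
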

\begin{proof}
Bound \eqref{eq:GDS_general_complexity} follows directly from Lemmas~\ref{thm:GDS_outer_convergence} and \ref{thm:GDS_subroutine_convergence}.
We conclude this theorem by recalling  $x_0=\tilde{x}_0$ and dropping some negative terms in \eqref{eq:GDS_general_complexity}.
\end{proof} 

To better understand the role of $T_k$, it is instructive to examine two extreme choices when the Lipschitz constants $L$ and $M$ are known.
If $T_k\equiv\infty$, the resulting method has $\cO(L/\varepsilon)$ gradient complexity for $\nabla g$, at the expense of infinitely many gradient evaluations of $\nabla f$. If instead $T_k\equiv1$, both $\nabla g$ and $\nabla f$ admit gradient complexity of $\cO((M+L)/\varepsilon)$. Both statements follow directly from the above theorem. The corollary below identifies an optimal choice of $T_k$ in the GS framework. Throughout the rest of the paper, we use $x^*$ to denote any optimal solution of problem~\eqref{eq:problem_of_interest}.
\begin{corollary}\label{thmGDS_parameter_know_constant}
Let the parameters of Algorithm \ref{alg:GDS} and Subroutine \ref{alg:GDS_subroutine} be set to 
\(L_k=L\), \(T_k=\lceil M/L\rceil\),
and 
\(M_k^t=M, \forall k\ge1, t\ge1\). To obtain an $\varepsilon$-solution, it requires at most $\mathcal{O}( L\|x^*-x_0\|^2/\varepsilon)$ and $\mathcal{O}((M+L)\|x^*-x_0\|^2/\varepsilon)$ evaluations of $\nabla g$ and $\nabla f$, respectively.
\end{corollary}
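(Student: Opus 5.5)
We apply Theorem~\ref{thm:GDS_complexity} directly, after checking its hypotheses and showing that the constant-ratio requirement holds (up to a harmless inequality) under the stated parameters.

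\textbf{Step 1: the hypotheses of Lemmas~\ref{thm:GDS_outer_convergence} and~\ref{thm:GDS_subroutine_convergence} hold.} With $L_k\equiv L$, inequality \eqref{eq:GDS_g_ineq} is exactly the $L$-smoothness condition \eqref{eq:L} evaluated at $x=\xtil_{k-1}$, $y=\xtil_k$. With $M_k^t\equiv M$, inequality \eqref{eq:GDS_f_ineq} is the H\"older condition \eqref{eq:holder_descent} with $\nu=1$ and $M_{\nu=1}=M$, evaluated at $x_k^{t-1}$ and $x_k^t$.

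\textbf{Step 2: bound the error term in \eqref{eq:GDS_general_complexity}.} We have $\eta_k=L$ and $p_k^t=M$, so
\[
\eta_k\sum\nolimits_{\tau=1}^{T_k}1/p_k^\tau = LT_k/M .
\]
Since $T_k=\lceil M/L\rceil$, this quantity is constant in $k$, equal to $1/a$ with $a=M/(L\lceil M/L\rceil)\le 1$. The telescoping argument behind \eqref{eq:GDS_Tk1_complexity} applies verbatim. The terms $\|x-x_{k-1}\|^2-\|x-x_k\|^2$ sum to at most $\|x-x_0\|^2$, and we use $x_0=\xtil_0$. Also $\sum_{k=1}^N 1/\eta_k=N/L$. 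This yields
\[
g(\xub_N)+f(\xub_N)-g(x^*)-f(x^*)\le \frac{(a+1)L\|x^*-x_0\|^2}{2N}\le \frac{L\|x^*-x_0\|^2}{N}.
\]

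\textbf{Step 3: count gradient evaluations.} Choosing $N=\lceil L\|x^*-x_0\|^2/\varepsilon\rceil$ makes the right-hand side at most $\varepsilon$. The number of evaluations of $\nabla g$ is therefore $N=\mathcal{O}(L\|x^*-x_0\|^2/\varepsilon)$. The number of evaluations of $\nabla f$ is
\[
NT_k\le N(M/L+1)=\mathcal{O}\big((M+L)\|x^*-x_0\|^2/\varepsilon\big).
\]
Note that the ``$+1$'' from the ceiling is absorbed because $N\cdot 1 = \mathcal{O}(L\|x^*-x_0\|^2/\varepsilon)$.

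There is no real obstacle here. The only point needing care is that Theorem~\ref{thm:GDS_complexity} asks for an exactly constant ratio $1/a$. Since $T_k$ does not depend on $k$, the ceiling leaves the ratio constant, and it merely pushes $a$ below $1$, which only improves the bound.
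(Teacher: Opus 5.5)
Your proposal is correct and matches the paper's proof essentially line by line. Like the paper, you verify \eqref{eq:GDS_g_ineq} and \eqref{eq:GDS_f_ineq} from the known constants and note that $\eta_k\sum_{\tau=1}^{T_k}1/p_k^\tau=(L/M)\lceil M/L\rceil\ge 1$, so Theorem~\ref{thm:GDS_complexity} applies with $a\le 1$; you then take $N=\lceil L\|x^*-x_0\|^2/\varepsilon\rceil$ and bound the $\nabla f$ count by $N\lceil M/L\rceil\le (M+L)N/L$.
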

\begin{proof}
\eqref{eq:GDS_g_ineq} and \eqref{eq:GDS_f_ineq} hold when the Lipschitz constants $M$ and $L$ are known.
Moreover, $\eta_k\sum\nolimits_{\tau=1}^{T_k}1/p_k^\tau = (L/M)\lceil M/L\rceil\ge 1$,
hence Theorem \ref{thm:GDS_complexity} holds with $a\le 1$. 
Because $\sum\nolimits_{k=1}^N{1}/{\eta_k} = N/L$ and \eqref{eq:GDS_Tk1_complexity}, we have
\(
g(\xub_N)+f(\xub_N)-g(x^*)-f(x^*)\le L\|x^*-x_0\|^2/N\le \varepsilon
\) if we set $N =\lceil L\|x^*-x_0\|^2/\varepsilon\rceil$. 
Recalling that $T_k = \lceil M/L\rceil$, we have
$\sum\nolimits_{k=1}^{N}T_k =N\lceil M/L\rceil \le (M+L)N/L$. 
\end{proof}

The above corollary highlights the competitiveness of GS when the Lipschitz constants $L$ and $M$ are known.
However, because these problem-dependent constants are rarely known in practice, we need to set $\eta_k=L_k$ and $p_k^t=M_k^t$, where $L_k$ and $M_k^t$ are obtained through backtracking linesearch.
This setting leads to two algorithmic design challenges.
First, the preceding analysis relies on the telescoping structure in \eqref{eq:GDS_general_complexity}, which is the most convenient to exploit when
$\eta_k\sum\nolimits_{\tau=1}^{T_k} 1/p_k^\tau$
is monotone in $k$. 
This is precisely why
$\eta_k\sum\nolimits_{\tau=1}^{T_k} 1/p_k^\tau$ is assumed to be constant in deriving the bound \eqref{eq:GDS_Tk1_complexity}.
However, because the sequences $\{L_k\}$ and $\{M_k^t\}$ may vary with both $k$ and $t$, it becomes challenging to choose the number of iterations $T_k$ in subroutine $\cA$ so that
\(
L_k\sum\nolimits_{t=1}^{T_k}1/M_k^t
\)
remains invariant while still preserving the desired gradient complexity.
Second, when performing linesearch on the estimate of $L$, some trial values of $L$ are inevitably incorrect.
In such cases, we must backtrack and resolve subproblem \eqref{eq:GDS_subproblem} with a new estimate of $L$.
Since each backtracking step incurs an additional call to Subroutine~\ref{alg:GDS_subroutine}, we must ensure 
that the overall complexity is preserved. 
\section{Parameter-free strategy for GDS}\label{sec:GDS_linesearch}
This section addresses the above two critical challenges when problem constants $L$ and $M$ are unknown, and proposes parameter-free strategies for GDS to attain $\cO(L/\varepsilon)$ and $\cO(M/\varepsilon)$ complexity bounds.
\subsection{A modified subroutine for inner backtracking linesearch} The proposed \label{sec:subroutine}Subroutine~\ref{alg:GDS_linesearch_subroutine} modifies Subroutine~\ref{alg:GDS_subroutine} by adding a linesearch strategy for estimating $M$ and proposing a dynamic way to determine the number of inner iterations which addresses the first challenge mentioned above.
\begin{algorithm}[h]
 \caption{A modified subroutine 
$\mathcal{A}$ for parameter-free GDS}
\label{alg:GDS_linesearch_subroutine}
\begin{algorithmic}[1]
\State Input:
$x_{k-1}$,
$\nabla g(\xtil_{k-1})$,
$\eta_k^s$, and $M_k^{0,s}$ such that $M_k^{0,s}/\eta_k^s$ is a positive integer
\State
Set
$x_k^{0,s} = x_{k-1}$, $T_k^{0,s} = M_k^{0,s}/\eta_k^s$, and $t=1$
\State 
Compute
\vspace{-10pt}
\begin{equation*} 
    x_k^{t,s} = \text{argmin}_{x\in X} \langle \nabla g(\xtil_{k-1}) + \nabla f(x_k^{t-1,s}), x\rangle + \tfrac{\eta_k^s}{2}\|x-\xtil_{k-1}\|^2+\tfrac{p_k^{t,s}}{2}\|x-x_k^{t-1,s}\|^2,
\end{equation*}

\vspace{-10pt}
\noindent where $p_k^{t,s}=M_k^{t,s}$,  $M_k^{t,s}=2^{i_k^{t,s}}M_k^{t-1,s}$, and $i_k^{t,s}$ is the smallest nonnegative integer such that
\( 
    f(x_k^{t,s})
    \le 
    f(x_k^{t-1,s})+\langle\nabla f(x_k^{t-1,s}),x_k^{t,s}-x_k^{t-1,s}\rangle+\tfrac{M_k^{t,s}}{2}\|x_k^{t,s}-x_k^{t-1,s}\|^2
\)
\label{state:xkts_line} 
\State Set $T_k^{t,s}=(M_k^{t,s}/M_k^{t-1,s})(T_k^{t-1,s}-t+1)-1+t$

\State If $t = T_k^{t,s}$, terminate and output 
$\xtil_k^s = (\sum\nolimits_{\tau=1}^{T_k^s} 1/p_k^{\tau,s} )^{-1}\sum\nolimits_{t=1}^{T_k^s} x_k^{t,s}/p_k^{t,s} $, $x_k^s = x_k^{T_{k}^s,s}$, $M_k^s = M_k^{T_k^s,s}$; otherwise, set $t\gets t+1$, return to line \ref{state:xkts_line}
\end{algorithmic}
\end{algorithm}

There are three major differences between Subroutine~\ref{alg:GDS_linesearch_subroutine} and \ref{alg:GDS_subroutine}.
First, Subroutine~\ref{alg:GDS_linesearch_subroutine} starts from an initial guess of $M_k^{0,s}$ and outputs an updated estimate $M_k^s\ge M_k^{0,s}$, without requiring the exact value of $M$. 
Second, instead of taking $\eta_k$ as input, it uses $\eta_k^s$, since multiple trial estimates of $L$ may be needed before a suitable value is identified through outer backtracking.
The index $s$ labels these trial estimates $L_k^s$ and the corresponding iterates $x_k^{t,s}$, $x_k^s$, and $\tilde x_k^s$.
Third, the Subroutine~\ref{alg:GDS_linesearch_subroutine} no longer requires a pre-specified iteration count $T_k$ and instead updates it adaptively.
As shown below, this guarantees that
\(
\eta_k^s\sum\nolimits_{t=1}^{T_k^s}1/p_k^{t,s}
\)
stays invariant for all $k$ and $s$, which is essential for recovering the telescoping structure in \eqref{eq:GDS_general_complexity}.
\begin{lemma}
\label{thm:GDS_linesearch_maintain_eq}
    Fix any $k\ge1$ and $s\ge1$. Subroutine~\ref{alg:GDS_linesearch_subroutine} generates positive integers $T_k^{t,s}$ for all $t$, such that
    \(
    \eta_k^s \sum_{t=1}^{T_k^s} 1/p_k^{t,s} = 1
    \)
    holds at the last iteration $T_k^s$.
    \end{lemma}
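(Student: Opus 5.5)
The plan is to find a quantity that the update rule for $T_k^{t,s}$ keeps constant in $t$, and then read off the claim at termination. For $t\ge 0$ define
\begin{equation*}
\Phi_t := \eta_k^s\Big(\sum\nolimits_{\tau=1}^{t} \tfrac{1}{p_k^{\tau,s}} + \tfrac{T_k^{t,s}-t}{M_k^{t,s}}\Big).
\end{equation*}
In words, $\Phi_t$ is the weight already accumulated, plus the weight the remaining $T_k^{t,s}-t$ planned iterations would contribute if each used the current estimate $M_k^{t,s}$. At $t=0$ the sum is empty and $T_k^{0,s}=M_k^{0,s}/\eta_k^s$, so $\Phi_0 = \eta_k^s\cdot (M_k^{0,s}/\eta_k^s)/M_k^{0,s} = 1$.

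The invariance step is to show $\Phi_t=\Phi_{t-1}$ for every $t\ge1$. From line 4 of Subroutine~\ref{alg:GDS_linesearch_subroutine} we have $T_k^{t,s}-t = (M_k^{t,s}/M_k^{t-1,s})(T_k^{t-1,s}-t+1)-1$. Dividing by $M_k^{t,s}$ gives
\begin{equation*}
\tfrac{T_k^{t,s}-t}{M_k^{t,s}} = \tfrac{T_k^{t-1,s}-(t-1)}{M_k^{t-1,s}} - \tfrac{1}{M_k^{t,s}}.
\end{equation*}
Since $p_k^{t,s}=M_k^{t,s}$, the term $-1/M_k^{t,s}$ exactly cancels the new summand $1/p_k^{t,s}$ that enters $\Phi_t$. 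Hence $\Phi_t=\Phi_{t-1}=\dots=\Phi_0=1$. At termination $t=T_k^{t,s}=T_k^s$, the tail term vanishes and we get $\eta_k^s\sum_{t=1}^{T_k^s}1/p_k^{t,s}=1$.

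It remains to check integrality and positivity, and this is the step needing the most care. For integrality, $M_k^{t,s}/M_k^{t-1,s}=2^{i_k^{t,s}}$ is a positive integer, and by induction $T_k^{t-1,s}$ is an integer, so $T_k^{t,s}$ is an integer. For positivity and meaningful termination, I would show by induction that $T_k^{t-1,s}\ge t$ whenever iteration $t$ is executed. This holds at $t=1$ because $T_k^{0,s}\ge1$, and it holds afterwards because the subroutine reaches iteration $t$ only if $t-1\ne T_k^{t-1,s}$. Combined with the induction showing $T_k^{t-1,s}\ge t-1$, this gives $T_k^{t-1,s}-t+1\ge1$. Then $T_k^{t,s}-t\ge 2^{i}\cdot1-1\ge0$, so $T_k^{t,s}\ge t\ge1$. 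The induction hypothesis is therefore preserved, and $T_k^{t,s}$ is a positive integer.

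Finally, termination. The remaining count $T_k^{t,s}-t$ decreases by one whenever no doubling occurs. Doubling can occur only finitely often, assuming $M_k^{t,s}$ stays bounded, e.g.\ by $2\max(M,M_k^{0,s})$ under $M$-smoothness. So $t=T_k^{t,s}$ is eventually reached and the last iteration $T_k^s$ is well defined.
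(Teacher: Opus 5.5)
Your proof is correct and is essentially the paper's argument. The paper tracks the remaining budget $B_k^{t,s}=(T_k^{t-1,s}-t+1)/M_k^{t-1,s}$, shows $B_k^{t+1,s}=B_k^{t,s}-1/M_k^{t,s}$, and uses $B=0$ at termination; your invariant is $\Phi_t=\eta_k^s\bigl(\sum_{\tau\le t}1/M_k^{\tau,s}+B_k^{t+1,s}\bigr)$, the same telescoping in a different form. Your induction for $T_k^{t,s}\ge t$ and your finite-termination argument under $M$-smoothness add detail the paper only calls straightforward or leaves implicit.
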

\begin{proof}
From the description of Subroutine~\ref{alg:GDS_linesearch_subroutine} it is straightforward to observe that $T_k^{t,s}$ is always a well-defined positive integer.
Since $p_k^{t,s}=M_k^{t,s}$ for $t\ge1$, it suffices to prove that $\sum\nolimits_{t=1}^{T_k^s}1/M_k^{t,s}={1}/{\eta_k^s}$.
For $t\ge1$, define $B_k^{t,s} := (T_k^{t-1,s}-t+1)/M_k^{t-1,s}$.
From the definitions of $T_k^{t,s}$ and $B_k^{t,s}$ we can observe that
\(
B_k^{t+1,s}
= B_k^{t,s} - {1}/{M_k^{t,s}}.
\)
Moreover, observe from the termination criteria in Subroutine~\ref{alg:GDS_linesearch_subroutine}
that $B_k^{t+1,s}=0$ when $t$ is the index at termination.
Summarizing the above two observations we conclude that
  $\sum\nolimits_{t=1}^{T_k^s}(1/M_k^{t,s}) = B_k^{1,s}=T_k^{0,s}/M_k^{0,s}=1/\eta_k^s$.
\end{proof}

We conclude this subsection by noting that the convergence analysis of Subroutine~\ref{alg:GDS_linesearch_subroutine} follows directly from that of Subroutine~\ref{alg:GDS_subroutine},
since the two subroutines follow the same core structure, differing only by the additional index $s$.
Specifically, the backtracking procedure in Subroutine~\ref{alg:GDS_linesearch_subroutine} enforces the smoothness condition \eqref{eq:GDS_f_ineq} 
for all $k,s,t\ge1$. 
Therefore, 
Lemma~\ref{thm:GDS_subroutine_convergence} applies directly, with the added superscript $s$.
\subsection{Proposed parameter-free GDS}
The second challenge mentioned at the end of Section~\ref{sec:GDS} is to ensure that the overall complexity bounds are preserved despite the additional gradient evaluations of $f$ incurred when estimating the constant $L$. To address this challenge,
we first consider a naive approach in a special case of Algorithm \ref{alg:GDS} with $\varepsilon_k(x)\equiv 0$. 
In this case, this algorithm reduces to the proximal gradient method.
A natural backtracking linesearch strategy for estimating $L$ starts
from an initial estimate $L_0\le L$ and, at each iteration $k$, tests a trial value $L_k$, 
doubling it until the Lipschitz condition \eqref{eq:GDS_g_ineq} is satisfied.
Motivated by this standard approach, Algorithm \ref{alg:GDS_linesearch_naive} presents a naive parameter-free implementation of GDS.
\begin{algorithm}[h]
\caption{GDS with naive backtracking linesearch}
\label{alg:GDS_linesearch_naive}
\begin{algorithmic}[1]
\State Initialization: $\xtil_0=x_0\in X$, $L_0$ such that $\varepsilon/\|x_0 - x^*\|^2\le L_0 \le L$ and $M_0=L_0$
\For{$k=1,2,\dots, N$}
\State Set $s=1$, and $M_k^0 = M_{k-1}$
\State
Compute
$L_k^s = 2^{s-1}L_{k-1}$,
$\eta_k^s = L_k^s$, and
$M_k^{0,s} = \max\{M_k^{s-1}, \eta_k^s\}$
\label{state:lks_update}
\State Call subroutine $(x_k^s, \xtil_k^{s},M_k^s) = \mathcal{A}(x_{k-1}, \nabla g(\xtil_{k-1}), \eta_k^s, M_k^{0,s})$ in Algorithm~\ref{alg:GDS_linesearch_subroutine}
\State If $g(\xtil_k^s)
    >
    g(\tilde x_{k-1})
    + \langle \nabla g(\tilde x_{k-1}), \tilde x_k^s - \tilde x_{k-1} \rangle
    + \tfrac{L_k^s}{2}\|\tilde x_k^s - \tilde x_{k-1}\|^2,$ update $s\gets s+1$ and return to line \ref{state:lks_update}; otherwise, set
$S_k = s$,
$L_k=L_k^{S_k}$,
$x_k= x_k^{S_k}$,
$\xtil_k = \xtil_k^{S_k}$,
and $M_k = M_k^{S_k}$ \label{state:k1_outer_ineq}
\EndFor

\State Output:
$\xub_N =(\sum\nolimits_{k=1}^{N} 1/\eta_k^{S_k})^{-1}\sum\nolimits_{k=1}^{N} \xtil_k/\eta_k^{S_k} $
\end{algorithmic}
\end{algorithm}

In Algorithm \ref{alg:GDS_linesearch_naive}, we require that the initial guess $L_0$ satisfies $\varepsilon/\|x_0 - x^*\|^2\le L_0 \le L$. Here the condition that $L_0\le L$ can be achieved by selecting two points $y$ and $z$ in $X$ at random and setting $L_0= 2(g(z)-g(y)-\langle \nabla g(y), z-y\rangle)/\|y-z\|^2$. The requirement that $L_0\ge \varepsilon/\|x_0 - x^*\|^2$ is necessary for the complexity analysis, as we will see in the proof of Corollary \ref{thm:GDS_naive_corollary}. This is a drawback of the naive linesearch approach that we will address later.

Similar to the discussion after Lemma~\ref{thm:GDS_linesearch_maintain_eq}, due to the analogy between Algorithms \ref{alg:GDS} and \ref{alg:GDS_linesearch_naive}, Lemma~\ref{thm:GDS_outer_convergence} and Theorem \ref{thm:GDS_complexity} apply to Algorithm \ref{alg:GDS_linesearch_naive} directly with $\eta_k:=\eta_k^{S_k}$ and $T_k := T_k^{S_k}$.
Then the complexity of Algorithm~\ref{alg:GDS_linesearch_naive} can be stated below.
\begin{corollary}\label{thm:GDS_naive_corollary}
    Algorithm \ref{alg:GDS_linesearch_naive}
     with Subroutine~\ref{alg:GDS_linesearch_subroutine} requires at most $\lceil2L\|x_0-x^*\|^2/\varepsilon\rceil$ and $\lceil 8 M\|x_0-x^*\|^2/\varepsilon\rceil$ evaluations of $\nabla g$ and $\nabla f$  to
    compute an $\varepsilon$-solution.
\end{corollary}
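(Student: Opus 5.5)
The plan is to combine Theorem~\ref{thm:GDS_complexity} with Lemma~\ref{thm:GDS_linesearch_maintain_eq}, and then count $\nabla g$ and $\nabla f$ evaluations separately. Let $D:=\|x_0-x^*\|$.

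\emph{Convergence bound and $\nabla g$ count.} By Lemma~\ref{thm:GDS_linesearch_maintain_eq}, every accepted trial satisfies $\eta_k\sum_{t=1}^{T_k}1/p_k^t=1$. The backtracking in Algorithm~\ref{alg:GDS_linesearch_naive} and Subroutine~\ref{alg:GDS_linesearch_subroutine} enforces \eqref{eq:GDS_g_ineq} and \eqref{eq:GDS_f_ineq}. Hence \eqref{eq:GDS_Tk1_complexity} holds with $a=1$, giving
\[
g(\xub_N)+f(\xub_N)-g(x^*)-f(x^*)\le D^2\Big(\sum\nolimits_{k=1}^N 1/L_k\Big)^{-1}.
\]
Next we bound the accepted estimates. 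Since $L_0\le L$ and a trial is always accepted once $L_k^s\ge L$, induction gives $L_{k-1}\le 2L$ and $L_k\le 2L$ for all $k$. We take $N$ to be the first index with $\sum_{k=1}^N 1/L_k\ge D^2/\varepsilon$. As each term is at least $1/(2L)$, this gives $N\le\lceil 2LD^2/\varepsilon\rceil$. The quantity $\nabla g(\xtil_{k-1})$ is computed once per outer iteration $k$, and the failed outer trials only use function values of $g$. So the number of $\nabla g$ evaluations is $N$.

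\emph{Per-trial $\nabla f$ count.} Within trial $(k,s)$, Lemma~\ref{thm:GDS_linesearch_maintain_eq} gives $\sum_{t=1}^{T_k^s}1/M_k^{t,s}=1/\eta_k^s$. Since $M_k^{t,s}\le M_k^s$, this yields $T_k^s\le M_k^s/L_k^s$. Next we show all $M$-estimates stay below $2M$. The inner doubling stops once the estimate is at least $M$, so $M_k^s\le\max\{2M,M_k^{0,s}\}$. Moreover $M_k^{0,s}=\max\{M_k^{s-1},\eta_k^s\}$, where $\eta_k^s\le 2L\le 2M$ (using $M\ge L$), and $M_0=L_0\le M$. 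Induction over the whole run therefore gives $M_k^s\le 2M$, so $T_k^s\le 2M/L_k^s$. Summing the geometric sequence $L_k^s=2^{s-1}L_{k-1}$ over all trials $s$, including the failed ones, gives $\sum_s T_k^s\le 4M/L_{k-1}$.

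\emph{Summing over $k$ (main obstacle).} The hard step is bounding $\sum_{k=1}^N 1/L_{k-1}$, because a priori it is indexed one step behind the sum controlled by the convergence criterion. Shifting the index,
\[
\sum_{k=1}^N \frac{1}{L_{k-1}} = \frac{1}{L_0}+\sum_{k=1}^{N-1}\frac{1}{L_k}.
\]
Here the hypothesis $L_0\ge\varepsilon/D^2$ bounds the first term by $D^2/\varepsilon$. Minimality of $N$ bounds the second sum by $D^2/\varepsilon$. Hence the total number of $\nabla f$ evaluations is at most $4M\cdot 2D^2/\varepsilon=8MD^2/\varepsilon$, which gives the stated bound after rounding up.
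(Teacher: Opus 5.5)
Your proposal is correct and follows the paper's proof essentially step by step. It uses the same per-iteration bound $\sum_s T_k^s\le 2M\sum_s 1/L_k^s\le 4M/L_{k-1}$, the same choice of $N$ as the first index with $\sum_{k\le N}1/L_k\ge \|x_0-x^*\|^2/\varepsilon$, and the same index shift $\sum_{k=1}^{N}1/L_{k-1}=1/L_0+\sum_{k=1}^{N-1}1/L_k$, where $L_0\ge\varepsilon/\|x_0-x^*\|^2$ controls the first term.

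The paper adds one item you skip: a one-line check that $M_k^{0,s}/\eta_k^s$ is a positive integer, needed for Subroutine~\ref{alg:GDS_linesearch_subroutine} to be well defined (all estimates are $L_0$ times powers of two, and $M_k^{0,s}\ge\eta_k^s$). Conversely, your explicit inductions for $L_k\le 2L$ and $M_k^{t,s}\le 2M$ fill in what the paper only asserts.
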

\begin{proof} 
We start by noting that the integrality requirement of \(M_k^{0,s}/\eta_k^s\) stated in Subroutine~\ref{alg:GDS_linesearch_subroutine} is well defined. Indeed, for every \((k,s)\), the
definition of \(M_k^{0,s}\) and the search rule for \(L_k^s\) and \(M_k^{t,s}\) ensure that
\(M_k^{0,s}/\eta_k^s\) is a positive integer.   
We now analyze the complexities. Noting from the descriptions of Algorithm~\ref{alg:GDS_linesearch_naive} and Subroutine~\ref{alg:GDS_linesearch_subroutine}, we have $\eta_k^s = L_k^s$ and $p_k^{t,s}=M_k^{t,s}\le 2M$. 
Then,
utilizing the above relations and  Lemma~\ref{thm:GDS_linesearch_maintain_eq}  
we have
        \(\sum\nolimits_{s=1}^{S_k} T_k^s
        \le
        \sum\nolimits_{s=1}^{S_k} \sum\nolimits_{t=1}^{T_k^s}2M/M_k^{t,s}
        =    2M\sum\nolimits_{s=1}^{S_k} 1/L_k^s \le 4M/L_k^1 =  4M/L_{k-1}
        \).
        Let $N_\varepsilon\ge1$ be the first outer iteration index such that 
        \(\sum\nolimits_{k=1}^{N_\varepsilon}1/L_k\ge \|x_0 - x^*\|^2/\varepsilon 
        > \sum\nolimits_{k=1}^{N_\varepsilon-1}1/L_k 
        \). 
        Then, by   Lemma~\ref{thm:GDS_linesearch_maintain_eq} and the relation $\eta_k=L_k \le 2L$, we have from Theorem \ref{thm:GDS_complexity} that $\xub_{N_\varepsilon}$ is an $\varepsilon$-solution and the required number of evaluations of $\nabla g$ is 
        \(
        N_\varepsilon\le \lceil 2L\|x_0-x^*\|^2/\varepsilon\rceil
        \).
        Recalling that $L_0\ge \varepsilon/\|x_0 - x^*\|^2$, it then follows from the above analysis that the needed number of evaluations of $\nabla f$ is 
\(      \sum\nolimits_{k=1}^{N_\varepsilon}\sum\nolimits_{s=1}^{S_k}T_k^s
        \le 4M  ( {1}/{L_0}+\sum\nolimits_{k=1}^{N_\varepsilon-1} {1}/{L_k}  )
        \le 8M 
        \|x_0-x^*\|^2/\varepsilon.
\)
\end{proof}

The above corollary shows that our proposed GDS with naive linesearch requires at most $\cO(L\|x_0-x^*\|^2/\varepsilon)$ and $\cO( M\|x_0-x^*\|^2/\varepsilon)$ evaluations of $\nabla g$ and $\nabla f$ respectively to compute an $\varepsilon$-solution. It is also easy to show that the total number of function evaluations of $g$ and $f$ are in the same order.
Note that although Algorithm \ref{alg:GDS_linesearch_naive} is motivated by the standard backtracking linesearch strategy in proximal gradient methods, the above analysis is nontrivial due to
the cost incurred by incorrect estimates of $L$.
In standard proximal gradient descent methods, incorrect estimates incur no additional gradient evaluations. In contrast, in Algorithm~\ref{alg:GDS_linesearch_naive}, each failed estimate of $L$ requires rerunning Subroutine \ref{alg:GDS_linesearch_subroutine} and hence incurs $T_k^s$ gradient evaluations of $\nabla f$.
Therefore, extra effort in the above analysis is needed to bound $\sum\nolimits_{k=1}^N\sum\nolimits_{s=1}^{S_k}T_k^s$.

A drawback of such naive linesearch strategy is that the initial guess $L_0$ needs to satisfy $L_0\ge \varepsilon/\|x_0 - x^*\|^2$, which still requires the prior knowledge of the distance to an optimal solution $x^*$.
Reexamining the proof of Corollary \ref{thm:GDS_naive_corollary} reveals that this drawback stems from the need to bound
the total cost $\sum_{s=1}^{S_k}T_k^s$ at the first iteration $k=1$, which is further bounded by $4M/L_0$.
The sole purpose of requiring $L_0\ge \varepsilon/\|x_0 - x^*\|^2$ is to guarantee that the incurred cost at iteration $k=1$ remains in the order of $\cO(M/\varepsilon)$.
This observation suggests that the drawback can be overcome by modifying the first iteration in the naive approach. Our proposed modification, called parameter-free GDS and described in Algorithm~\ref{alg:GDS_linesearch}, removes the dependence on the prior knowledge of $\|x_0 - x^*\|^2$  by modifying the first outer iteration in the naive approach. 
\begin{algorithm}[h]
\caption{\label{alg:GDS_linesearch} Parameter-free GDS}
\begin{algorithmic}[1]
\State In Algorithm \ref{alg:GDS_linesearch_naive}, modify the initialization and $k=1$ iteration as follows:
\State Initialization: $\xtil_0=x_0\in X$, $L_0\le L$
\State Set $k=1$, $s=1$, and
$x_1^{0,s} = x_0$
\State Compute
$x_1^{1,s} = \operatorname{argmin}_{x\in X} \langle \nabla g(\xtil_0) + \nabla f(x_1^{0,s}), x\rangle + \tfrac{\eta_1^s}{2}\|x-\xtil_0\|^2 + \tfrac{p_1^{1,s}}{2}\|x-x_1^{0,s}\|^2$,
where $\eta_1^s=L_1^s$, $p_1^{1,s}=M_1^{1,s}$,  $M_1^{1,s}=L_1^s = 2^{i_1^{1,s}}L_0$, and $i_1^{1,s}$ is the smallest nonnegative integer s.t. $g(x_1^{1,s}) \le g(\xtil_0) + \langle \nabla g(\xtil_0), x_1^{1,s}-\xtil_0\rangle + \tfrac{L_1^s}{2}\|x_1^{1,s}-\xtil_0\|^2$
and
$f(x_1^{1,s}) \le f(x_1^{0,s}) + \langle \nabla f(x_1^{0,s}), x_1^{1,s}-x_1^{0,s}\rangle + \tfrac{M_1^{1,s}}{2}\|x_1^{1,s} -x_1^{0,s}\|^2$\label{state:k1s1}
\State Set $x_1^s = \xtil_1^s = x_1^{1,s}$, $T_1^s = 1$, $M_1^s = M_1^{1,s}$ and $s=2$ 
\vspace{5pt}
\State  Compute
$L_1^s = L_1^1/2^{s-1}$,
$\eta_1^s = L_1^s$,
$M_1^{0,s} = \max\{M_1^{s-1}, \eta_1^s\}$\label{state:k1s>=2}
\State Call subroutine
$(x_1^s, \xtil_1^s, M_1^s)
=
\cA(x_0, \nabla g(\xtil_0), \eta_1^s, M_1^{0,s})$ in Algorithm~\ref{alg:GDS_linesearch_subroutine}
\State If
$g(\xtil_1^s) \le g(\xtil_0) + \langle \nabla g(\xtil_0), \xtil_1^s - \xtil_0\rangle + \tfrac{L_1^s}{2}\|\xtil_1^s - \xtil_0\|^2$,
 set $s\gets s+1$ and return to line \ref{state:k1s>=2}; otherwise, set
$S_1 = s-1$,
$L_1 = L_1^{S_1}$, 
$\xtil_1 = \xtil_1^{S_1}$, $x_1 = x_1^{S_1}$\label{state:outer_condition_for_GDS_linesearch}
\end{algorithmic}
\end{algorithm}

In a nutshell, Algorithm~\ref{alg:GDS_linesearch} modifies the way of estimating $L$ and $M$ in the first outer iteration as follows.
Starting from an arbitrary $L_0<L$, we first increase the estimates of  $L$ and $M$ until both the inner and outer smoothness conditions in line \ref{state:k1s1} are satisfied,
while maintaining $L_1^1=M_1^{1,1}$ to avoid the uncontrolled $\nabla f$ evaluation cost caused by an excessively small $L_0$.
We then decrease the estimate of $L$ until the outer smoothness condition in line \ref{state:outer_condition_for_GDS_linesearch} is violated for the first time. 
As a result, the accepted estimate satisfies $L_1\le 2L$, which ensures a controlled $\nabla g$ evaluation cost. 

As in the discussion following Algorithm~\ref{alg:GDS_linesearch_naive}, Theorem~\ref{thm:GDS_complexity} applies directly to Algorithm~\ref{alg:GDS_linesearch}. We are now ready to present the complexity of Algorithm~\ref{alg:GDS_linesearch}.
\begin{corollary}\label{thm:GDS_corollary}
     Algorithm \ref{alg:GDS_linesearch}
    with Subroutine~\ref{alg:GDS_linesearch_subroutine} requires at most $\lceil 2L\|x_0 - x^*\|^2/\varepsilon\rceil$  and $\lceil 12 M\|x_0 - x^*\|^2/\varepsilon\rceil$ evaluations of $\nabla g$ and $\nabla f$ to compute an $\varepsilon$-solution.
\end{corollary}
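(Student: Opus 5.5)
The proof follows Corollary~\ref{thm:GDS_naive_corollary} for all iterations $k\ge2$. The new work is to bound the cost of the modified first iteration and to check that the accepted estimate satisfies $L_1\le 2L$. That bound is what lets us drop the assumption $L_0\ge\varepsilon/\|x_0-x^*\|^2$.

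\emph{Facts about $k=1$.} In the increasing phase (line~\ref{state:k1s1}), $L_1^1=M_1^{1,1}$ is doubled until both smoothness conditions hold. Since $L_0\le L\le M$, this stops with $L_1^1\le 2\max\{L,M\}=2M$. Each trial costs one evaluation of $\nabla f$ and one of $\nabla g$ at fixed points, plus function values. In the decreasing phase (line~\ref{state:k1s>=2}), we halve $L_1^s$ and rerun Subroutine~\ref{alg:GDS_linesearch_subroutine}. The outer condition at $L_1^s$ is certainly satisfied when $L_1^s\ge L$, so the first violation occurs at some $L_1^{S_1+1}<L$, and hence $L_1=L_1^{S_1}=2L_1^{S_1+1}<2L$. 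I must also check that the accepted trial really satisfies \eqref{eq:GDS_g_ineq}. For $S_1\ge2$ this holds by the loop logic. For $S_1=1$ it holds by line~\ref{state:k1s1}. In that case $T_1^1=1$ and $\eta_1^1/p_1^{1,1}=1$, so Lemma~\ref{thm:GDS_linesearch_maintain_eq}'s identity $\eta_1\sum_t 1/p_1^t=1$ still holds. Integrality of $M_1^{0,s}/\eta_1^s$ holds because all quantities are $L_0$ times powers of two. I also need $M_k^{t,s}\le 2M$ throughout, which holds because both phases only double an estimate that violated the $M$-smoothness check, or take a max with $\eta_1^s\le L_1^1\le 2M$.

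\emph{Outer bound.} Theorem~\ref{thm:GDS_complexity} with $a=1$ applies, and every accepted $L_k\le 2L$: for $k=1$ by the argument above, for $k\ge2$ by doubling from $L_{k-1}$. Taking $N_\varepsilon$ as the first index with $\sum_{k\le N_\varepsilon}1/L_k\ge\|x_0-x^*\|^2/\varepsilon$ gives an $\varepsilon$-solution with $N_\varepsilon\le\lceil 2L\|x_0-x^*\|^2/\varepsilon\rceil$. I will count the extra $\nabla g$ evaluations from backtracking as lower order, as the paper does elsewhere. If the stated count is meant to include failed trials, I would bound them by a logarithmic term and note this.

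\emph{$\nabla f$ count; the main obstacle.} For $k\ge2$, the argument of Corollary~\ref{thm:GDS_naive_corollary} gives $\sum_s T_k^s\le 4M/L_{k-1}$. Summing over $k=2,\dots,N_\varepsilon$ gives $4M\sum_{k=1}^{N_\varepsilon-1}1/L_k\le 4M\|x_0-x^*\|^2/\varepsilon$.

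For $k=1$, the trials $s=2,\dots,S_1+1$ cost at most $\sum_s 2M/L_1^s$ each by Lemma~\ref{thm:GDS_linesearch_maintain_eq}. The $L_1^s$ form a halving sequence ending at $L_1^{S_1+1}=L_1/2$, so this geometric sum is dominated by its smallest term and is at most $2M\cdot 2\cdot(2/L_1)=8M/L_1$. Adding the single step at $s=1$ (about $M/L_1^1\le 1$ after normalization) gives roughly $8M/L_1$ for the first iteration.

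Since $1/L_1\le\|x_0-x^*\|^2/\varepsilon$ whenever $N_\varepsilon\ge2$ (by minimality of $N_\varepsilon$), and the case $N_\varepsilon=1$ is handled by $1/L_1$ being the whole sum up to a factor, the total is at most $(8+4)M\|x_0-x^*\|^2/\varepsilon$, i.e.\ $\lceil 12M\|x_0-x^*\|^2/\varepsilon\rceil$.

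The delicate part is exactly this first-iteration accounting. The key point is that the last, failed trial uses $L_1^{S_1+1}=L_1/2$, which is tied to the accepted $L_1$ rather than to the arbitrary $L_0$. The constants must be tracked carefully so they land at $12$. If they overshoot, I would sharpen the geometric sum by using $M_1^{0,s}\ge\eta_1^s$ and $p\le 2M$ more tightly.
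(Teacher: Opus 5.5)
\textbf{There is a genuine gap in the first-iteration accounting.} Your argument for the case where iteration $1$ accepts $L_1>\varepsilon/\|x_0-x^*\|^2$ (so $N_\varepsilon\ge2$) matches the paper's. The complementary case is not controlled, and that is exactly the case the modified first iteration was designed to handle.

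The decreasing search in line~\ref{state:outer_condition_for_GDS_linesearch} stops only at the first violation of the outer test. Your observation that a violation can occur only at $L_1^{S_1+1}<L$ gives $L_1<2L$, but it gives no lower bound on $L_1$ of the form $c\,\varepsilon/\|x_0-x^*\|^2$. A violation need not occur at all. For affine $g$, which satisfies \eqref{eq:L} for every $L>0$, the test passes at every trial, so $S_1$ is undefined. Each trial costs at least one $\nabla f$ evaluation, so the $\nabla f$ count is unbounded.

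Your remark that ``the case $N_\varepsilon=1$ is handled by $1/L_1$ being the whole sum up to a factor'' does not repair this. When $N_\varepsilon=1$ you have $1/L_1\ge\|x_0-x^*\|^2/\varepsilon$. That is a lower bound on the first-iteration cost $\approx 8M/L_1$, not an upper bound, and $L_1$ can be arbitrarily small.

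\textbf{The missing idea is to stop counting inside the decreasing search rather than at its end.} Let $s_\varepsilon$ be the trial with $\varepsilon/(2\|x_0-x^*\|^2)<L_1^{s_\varepsilon}\le\varepsilon/\|x_0-x^*\|^2$.

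Suppose the outer test passes at $s_\varepsilon$. Theorem~\ref{thm:GDS_complexity} with $N=1$ and $a=1$ (via Lemma~\ref{thm:GDS_linesearch_maintain_eq}) bounds the gap by $L_1^{s_\varepsilon}\|x_0-x^*\|^2\le\varepsilon$. So $\tilde{x}_1^{s_\varepsilon}$ is already an $\varepsilon$-solution, whatever the search does afterwards. It was obtained with one $\nabla g$ evaluation and at most $\sum_{s\le s_\varepsilon}T_1^s\le 2M\sum_{s\le s_\varepsilon}1/L_1^s\le 4M/L_1^{s_\varepsilon}<8M\|x_0-x^*\|^2/\varepsilon$ evaluations of $\nabla f$.

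Suppose instead the test fails at or before $s_\varepsilon$. Then $L_1>\varepsilon/\|x_0-x^*\|^2$, and your $N_\varepsilon\ge2$ argument applies.

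Two smaller points. First, every trial $s$ at outer iteration $k$ reuses the same $\nabla g(\tilde{x}_{k-1})$. So the $\nabla g$ count is exactly $N_\varepsilon$, and there is no backtracking overhead to treat as lower order. Second, to land exactly at $12$, fold the $s=1$ step into the geometric sum via $T_1^1=1\le 2M/L_1^1$ rather than adding it separately.
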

\begin{proof}
The search rules for $L_k^s$ and $M_k^{t,s}$ in Algorithm~\ref{alg:GDS_linesearch} and Subroutine~\ref{alg:GDS_linesearch_subroutine} ensure the integrality requirement of $M_k^{0,s}/\eta_k^s$ in Subroutine~\ref{alg:GDS_linesearch_subroutine} and the relations $\eta_k^s = L_k^s$, $\eta_k=L_k\le2L$ and $p_k^{t,s}=M_k^{t,s}\le 2M$. 
At the first outer iteration ($k=1$),  Algorithm~\ref{alg:GDS_linesearch} performs a decreasing search for an estimate of $L$ over $s\ge2$. During this decreasing search process, 
if the smoothness inequality in line~\ref{state:outer_condition_for_GDS_linesearch} of Algorithm~\ref{alg:GDS_linesearch} is tested and not violated at some trial value $L_1^{s_\varepsilon}$ such that $\varepsilon/(2\|x_0-x^*\|^2) < L_1^{s_\varepsilon}\le \varepsilon/\|x_0-x^*\|^2$, then  Theorem~\ref{thm:GDS_complexity} with Lemma~\ref{thm:GDS_linesearch_maintain_eq} indicates that $\xub_1 :=\xtil_1^{s_\varepsilon}$ is an $\varepsilon$-solution. In this case, the number of evaluations of $\nabla g$ and  $\nabla f$ is $1$ and 
\(\sum_{s=1}^{s_\varepsilon}T_1^s 
\le\sum_{s=1}^{s_\varepsilon}\sum_{t=1}^{T_1^s} 2M/M_1^{t,s} 
= 2M\sum_{s=1}^{s_\varepsilon}1/L_1^s 
\le 4M/L_1^{s_\varepsilon} 
< 8M\|x_0-x^*\|^2/\varepsilon\), respectively.
Otherwise, the decreasing search process accepts some $L_1 > \varepsilon/\|x_0-x^*\|^2$ and hence the number of evaluations of $\nabla f$ at the first outer iteration is $
\sum\nolimits_{s=1}^{S_1+1} T_1^s
         \le
         \sum\nolimits_{s=1}^{S_1+1} \sum\nolimits_{t=1}^{T_1^s} 2M/M_1^{t,s}
         \le
         {2M}/{L_1^{S_1+1}} + 2M\sum\nolimits_{s=1}^{S_1} {1}/{L_1^s} 
         \le
         {8M}/{L_1}
         < 8M\|x_0-x^*\|^2/\varepsilon
$.  Let $N_\varepsilon\ge2$ be the first outer iteration index such that $\sum_{k=1}^{N_\varepsilon} 1/L_k \ge \|x_0-x^*\|^2/\varepsilon$. Then, Theorem~\ref{thm:GDS_complexity} with Lemma~\ref{thm:GDS_linesearch_maintain_eq} indicates that $\xub_{N_\varepsilon}$ is an $\varepsilon$-solution and the required number of evaluations of $\nabla g$ is $N_\varepsilon \le \lceil 2L\|x_0-x^*\|^2/\varepsilon\rceil$. 
Moreover,  for $k\ge 2$,  \(\sum\nolimits_{s=1}^{S_k} T_k^s
        \le
        \sum\nolimits_{s=1}^{S_k} \sum\nolimits_{t=1}^{T_k^s}2M/M_k^{t,s}
        =    2M\sum\nolimits_{s=1}^{S_k} 1/L_k^s \le 4M/L_k^1 =  4M/L_{k-1}
        \). So the total number of evaluations of $\nabla f$ is 
$
\sum_{s=1}^{S_1+1}T_1^s 
+
\sum\nolimits_{k=2}^{N_\varepsilon}\sum\nolimits_{s=1}^{S_k}T_k^s 
< 
8M\|x_0-x^*\|^2/\varepsilon 
+ 
4M\sum\nolimits_{k=1}^{N_\varepsilon-1} 1/L_k
< 12M\|x_0-x^*\|^2/\varepsilon.
$
\end{proof}

Corollary~\ref{thm:GDS_corollary} shows that the proposed parameter-free GDS method achieves complexities of $\cO(L/\varepsilon)$ for $\nabla g$ and $\cO(M/\varepsilon)$ for $\nabla f$. It is also easy to verify that the total number of function evaluations of $g$ and $f$ are of the same order respectively. These complexity bounds match those obtained in Section~\ref{sec:GDS} for GDS with known constants $L$ and $M$. This raises several natural research questions. First, can the framework be extended from the Euclidean setting to general prox structures? Second, can accelerated methods be incorporated into the framework to obtain optimal complexities? Third, what modifications are needed when $\nabla f$ is only H\"older continuous rather than Lipschitz continuous? The next section addresses all these questions.

\section{Prototype: Universal Gradient Sliding (UGS)}
\label{sec:universal}
In this section, we develop a prototypical UGS method that is almost parameter-free for solving problem \eqref{eq:problem_of_interest},
where function $f$ satisfies the H\"older condition \eqref{eq:holder_descent}.
We incorporate a prox-function with respect to a general norm $\|\cdot\|$, accelerated sliding and  universal schemes, and attain the optimal oracle complexities for both $g$ and $f$.
The task is to design a double linesearch procedure without requiring prior knowledge of the problem constants $L$, $M_\nu$ or $\nu$.
We show that the proposed UGS
requires at most $\cO((M_\nu/\varepsilon)^{2/(1+3\nu)})$ and $\cO((L/\varepsilon)^{1/2})$ (sub)gradient evaluations of $f$ and $g$, respectively, under a single condition on the initial estimate $L_0$. 

The proposed UGS applies to problem \eqref{eq:problem_of_interest} under the assumption 
\begin{equation}\label{eq:LMnv_assumption}
    L \le \overline{M}_{\nu,\varepsilon}:=[\tfrac{1-\nu}{1+\nu}(\tfrac{1}{\varepsilon})]^{\tfrac{1-\nu}{1+\nu}} M_\nu^{\tfrac{2}{1+\nu}}.
\end{equation}
It captures the regime in which the $f$-component is more expensive in terms of oracle complexity, making it beneficial to skip evaluations of $\nabla g$. 
Here and throughout the paper we follow the convention that $0^0=1$ in \eqref{eq:LMnv_assumption}, so that $L\le \overline{M}_{\nu=1,\varepsilon} = M_{\nu=1}$. Similar convention concerning $0^0$ was used previously in the analysis of \cite{nesterov2015universal}.
\subsection{Proposed UGS algorithm}\label{sec:universal_base}
The proposed UGS adopts the same structure as in previous sections: at each iteration, Algorithm~\ref{alg:universal_linesearch} calls Subroutine~\ref{alg:universal_subroutine_linesearch} to approximately solve a subproblem.
To accommodate non-Euclidean settings, we use  a Bregman divergence as the proximity measure; see also~\cite{beck2003mirror}. Specifically, $V(\cdot,\cdot)$ is called a Bregman divergence on $X$ with modulus $\sigma_\omega>0$ (assumed $\sigma_\omega=1$ without loss of generality), if it is generated by a strongly convex function $\omega:X\to\mathbb{R}$ via
\begin{equation}\label{eq:Breg_definition}
    V(x,z):=\omega(z)-\omega(x)-\langle\nabla \omega(x),z-x\rangle
    \ge \tfrac{\sigma_\omega}{2}\|z-x\|^2,
    \qquad \forall x,z\in X.
\end{equation}
For notational convenience, in this paper we use the notation
$
    V^*:=V(x_0,x^*).
$

\begin{algorithm}[!h]
\caption{Main Procedure for UGS}
\label{alg:universal_linesearch}
\begin{algorithmic}[1]
\State Initialization: $x_0 = \xtil_0 = \xub_0 \in X$, $L_0$ s.t.
\( \varepsilon/V^*\le L_0\le L\) and  $M_0=L_0$
\For{$k=1,\dots, N$}
\State Set $s=1$ 
\State Set
$L_k^s = 2^{s-1}L_{k-1}$,
$\eta_k^s = L_k^s\gamma_k^s$, and 
$M_{k}^{0,s} = \max\{ M_0, L_k^s\}$\label{state:universal_outeralg_kge2}
\State
Compute 
$\xlb_k^s =  (1-\gamma_k^s)\xub_{k-1} + \gamma_k^s x_{k-1}
$, where 
\begin{equation}\label{eq:gamma_recur}
   \gamma_1^s=1, \quad \gamma_k^s >0,  \text{ s.t. }   L_k^s (\gamma_k^s)^2 = L_{k-1}(\gamma_{k-1}^{S_{k-1}})^2(1- \gamma_k^s), \forall k\ge 2
\end{equation}
\State Call subroutine $ (x_k^s, \tilde x_k^s, \xub_k^s)
=
\mathcal{A}(\nabla g(\xlb_k^s), \eta_k^s, M_{k}^{0,s})$ in Algorithm~\ref{alg:universal_subroutine_linesearch}
\State Set $S_k=s$,
$L_k=L_k^{S_k}$,
$x_k = x_k^{S_k}$,
$\xtil_k = \xtil_k^{S_k}$, $\xub_k=\xub_k^{S_k}$, if
\begin{align}\label{eq:universal_g_ineq_s}
    g(\xub_k^s)\le g(\xlb_k^s)+\langle \nabla g(\xlb_k^s), \xub_k^s-\xlb_k^s\rangle + \tfrac{L_k^s}{2}\|\xub_k^s-\xlb_k^s\|^2
\end{align}
Otherwise, set $s\gets s+1$ and return to line \ref{state:universal_outeralg_kge2}  \label{state:universal_mainalg_indexing}
\EndFor
\State Output
$\xub_N$
\end{algorithmic}
\end{algorithm}
\begin{algorithm}[!h]
    \caption{linesearch subroutine $\cA$}
    \label{alg:universal_subroutine_linesearch}
    \begin{algorithmic}[1]
        \State Input: $\nabla g(\xlb_k^s)$, $\eta_k^s$, $M_{k}^{0,s}$
        \State Set $x_k^{0,s} = x_{k-1}$, $\xtil_k^{0,s} = \xtil_{k-1}$, $c_k^{0,s}=1$, $t=1$
        \State Compute
        \vspace{-8pt}
        \begin{equation}
        \begin{split}
            \xlb_k^{t,s} = & (1-\gamma_k^s)\xub_{k-1}+\gamma_k^s[(1-\alpha_k^{t,s})\xtil_k^{t-1,s}+\alpha_k^{t,s}x_k^{t-1,s}]
            \\
            x_k^{t,s} = & \operatorname{argmin}_{x\in X} \langle \nabla g(\xlb_k^s) + f'(\xlb_k^{t,s}), x\rangle + \eta_k^sV(x_{k-1},x)+p_k^{t,s}V(x_k^{t-1,s},x)\label{eq:universal_subroutine_update}
            \\
            \xtil_k^{t,s} = & (1-\alpha_k^{t,s})\xtil_k^{t-1,s}+\alpha_k^{t,s}x_k^{t,s}
            \\
            \xub_k^{t,s} = & (1-\gamma_k^s)\xub_{k-1}+\gamma_k^s\xtil_k^{t,s}
            \end{split}
        \end{equation}
        Here the parameters are set as follows:
        \vspace{-5pt}
    \begin{equation}\label{eq:alpha_recur}
        \alpha_k^{1,s} =1, \quad \alpha_k^{t,s} >0, 
        \text{ s.t. } M_k^{t,s}(\alpha_k^{t,s})^2 = M_k^{t-1,s}(\alpha_k^{t-1,s})^2(1-\alpha_k^{t,s}), \forall t\ge2
    \end{equation}
    $
    p_k^{t,s} = L_k^s \gamma_k^{s}(1-\alpha_k^{t,s})/\alpha_k^{t,s}
    + \gamma_k^sM_k^{t,s}\alpha_k^{t,s}
    $, and $M_{k}^{t,s}=2^{i_k^{t,s}}c_k^{t-1,s}M_{k}^{t-1,s}$, where $i_k^{t,s}$ is the smallest nonnegative integer such that the following holds:
    \vspace{-5pt}
            \begin{equation}
            \label{eq:universal_f_ineq_s}
            f(\xub_k^{t,s})\le f(\xlb_k^{t,s})+
    \langle f'(\xlb_k^{t,s}), \xub_k^{t,s} - \xlb_k^{t,s}\rangle + \tfrac{M_k^{t,s}}{2}\|\xub_k^{t,s}-\xlb_k^{t,s}\|^2+\tfrac{\gamma_k^s\alpha_k^{t,s}\varepsilon}{2}
\end{equation}\label{state:universal_subroutine_M_linesearch}
    \vspace{-10pt}
    \State\label{state:universal_T_test1} If {$M_k^{t,s}(\alpha_k^{t,s})^2=L_k^s$}, set $T_k^s=t$, \textbf{terminate} and output 
                $x_k^s=x_k^{T_k^s,s}$,
        $\tilde x_k^s = \tilde x_k^{T_k^s,s}$,
        $\xub_k^s = \xub_k^{T_k^s,s}$;
        else if
        {$M_k^{t,s} [(\alpha_k^{t,s})^4+2(\alpha_k^{t,s})^2-(\alpha_k^{t,s})^3\sqrt{(\alpha_k^{t,s})^2+4}]/2 \le L_k^s$},
        set $c_k^{t,s}=L_k^s(\alpha_k^{t,s})^4M_k^{t,s}/[M_k^{t,s}(\alpha_k^{t,s})^2 - L_k^s]^2$; otherwise, set $c_k^{t,s}=1$
        \vspace{5pt}
        \State Set $t\gets t+1$ and return to line \ref{state:universal_subroutine_M_linesearch}
    \end{algorithmic}
\end{algorithm}
To achieve the optimal gradient complexity for the smooth component $g$, Algorithm~\ref{alg:universal_linesearch} incorporates the accelerated gradient method in \cite{nesterov1983method,lan2020first}. Meanwhile, Subroutine~\ref{alg:universal_subroutine_linesearch} combines two ideas from the literature. First, following the universal gradient method~\cite{nesterov2015universal}, it treats the nonsmooth or weakly smooth component $f$ as approximately smooth and applies an accelerated scheme; this is reflected in the linesearch condition \eqref{eq:universal_f_ineq_s}, which permits the error term $\gamma_k^s\alpha_k^{t,s}\varepsilon/2$. Second, following the accelerated gradient sliding method~\cite{lan2022accelerated}, the inner gradient evaluation point $\xlb_k^{t,s}$ is chosen as a convex combination of $\{x_k^{\tau,s}\}_{\tau< t}$ and the outer point $\xub_{k-1}$. This construction is crucial for embedding the doubly accelerated structure into the sliding framework and obtaining the optimal complexity bounds. Note that \cite{lan2022accelerated} considers only a sum of two smooth functions with known Lipschitz constants, while we propose a parameter-free algorithm and extend the structure to the case where the objective consists of a smooth function and a function with H\"older continuous gradient.

The proposed UGS also introduces two new ingredients that require  additional techniques. First, in GDS, the quantity $M_k^{t,s}$ in Algorithm~\ref{alg:GDS_linesearch_subroutine} serves both as the linesearch estimate of the unknown constant $M$ and as the parameter used to verify \eqref{eq:GDS_f_ineq}. As a result, once $M_k^{t,s}$ is sufficiently large, \eqref{eq:GDS_f_ineq} holds globally. For a general function $f$ satisfying the $(M_\nu,\nu)$-H\"older condition in \eqref{eq:holder_descent}, however, the validity of the linesearch must first be related to the pair $(M_\nu,\nu)$. By adapting the strategy of \cite{nesterov2015universal}, we obtain the following proposition; its proof is deferred to the appendix.
\begin{proposition}\label{thm:universal_estimates_bound}
Assume that the relation between $L$ and $M_\nu$ in \eqref{eq:LMnv_assumption} holds.
For any $k\ge 1$ and $s\ge 1$, in Algorithm~\ref{alg:universal_linesearch} and Subroutine~\ref{alg:universal_subroutine_linesearch}, the estimates $L_k$ and $M_k^{t,s}$ of Lipschitz and H\"older constants satisfy the following relations:
{\small \begin{equation}
\label{eq:LM_within_2true}
\begin{aligned}
    & L_k\le 2L, M_k^{1,s}\le 2\overline{M}_{\nu,\varepsilon}{\gamma_k^s}^{ -\tfrac{1-\nu}{1+\nu}}
    , 
    M_k^{t,s}\le 2\overline{M}_{\nu,\varepsilon}{(\gamma_k^s\alpha_k^{t,s})}^{ -\tfrac{1-\nu}{1+\nu}}
    , \forall 2\le t\le T_k^s-1.
\end{aligned}
\end{equation}}
\end{proposition}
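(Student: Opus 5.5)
The plan is to adapt Nesterov's universal inexact-smoothness lemma: for any $\delta>0$, the H\"older condition \eqref{eq:holder_descent} implies
\[
f(y)\le f(x)+\langle f'(x),y-x\rangle+\tfrac{M(\delta)}{2}\|y-x\|^2+\tfrac{\delta}{2},\qquad M(\delta):=\big[\tfrac{1-\nu}{1+\nu}\tfrac{1}{\delta}\big]^{\frac{1-\nu}{1+\nu}}M_\nu^{\frac{2}{1+\nu}}.
\]
This follows from Young's inequality applied to $\tfrac{M_\nu}{1+\nu}r^{1+\nu}\le \tfrac{M(\delta)}{2}r^2+\tfrac{\delta}{2}$, with the convention $0^0=1$ when $\nu=1$. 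With $\delta=\gamma_k^s\alpha_k^{t,s}\varepsilon$, homogeneity gives
\[
M(\delta)=\overline{M}_{\nu,\varepsilon}(\gamma_k^s\alpha_k^{t,s})^{-\frac{1-\nu}{1+\nu}}.
\]
Hence the test \eqref{eq:universal_f_ineq_s} is \emph{guaranteed} to pass whenever $M_k^{t,s}\ge \overline{M}_{\nu,\varepsilon}(\gamma_k^s\alpha_k^{t,s})^{-\frac{1-\nu}{1+\nu}}$. Similarly, \eqref{eq:L} guarantees \eqref{eq:universal_g_ineq_s} once $L_k^s\ge L$.

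\textbf{The bound on $L_k$.} Since $L_k^s=2^{s-1}L_{k-1}$ doubles, starting from $L_0\le L$, it stops at the first $s$ where \eqref{eq:universal_g_ineq_s} holds. If $s=1$, then $L_k=L_{k-1}\le 2L$ by induction. If $s>1$, the previous trial $L_k^{s-1}$ failed, so $L_k^{s-1}<L$ and therefore $L_k=2L_k^{s-1}<2L$.

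\textbf{The bound for $t=1$.} Here $\alpha_k^{1,s}=1$, so the target is $\overline{M}_{\nu,\varepsilon}(\gamma_k^s)^{-\frac{1-\nu}{1+\nu}}$. The initial value is $M_k^{0,s}=\max\{M_0,L_k^s\}$ and $c_k^{0,s}=1$. If no doubling occurs, then $M_k^{1,s}=\max\{M_0,L_k^s\}\le 2L\le 2\overline{M}_{\nu,\varepsilon}$ by assumption \eqref{eq:LMnv_assumption}, using $L_k^s\le 2L$ (which holds for every tried $s$ by the argument above) and $M_0=L_0\le L$. This is at most $2\overline{M}_{\nu,\varepsilon}(\gamma_k^s)^{-\frac{1-\nu}{1+\nu}}$ because $\gamma_k^s\le1$. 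If doubling occurs, the previous trial $M_k^{1,s}/2$ failed, so it was below the threshold, and the factor $2$ follows.

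\textbf{The bound for $2\le t\le T_k^s-1$.} The same dichotomy applies. If $i_k^{t,s}\ge1$, the failed half value lies below the threshold for the current $\alpha_k^{t,s}$. One subtlety is that $\alpha_k^{t,s}$ depends on $M_k^{t,s}$ through \eqref{eq:alpha_recur}, so I must check that halving $M$ changes $\alpha$ in the right direction. A larger $M$ gives a smaller $\alpha$, so at the failed trial $\alpha$ was larger and the threshold was smaller; I will argue via monotonicity that the bound still holds, possibly redoing the threshold comparison with care.

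If $i_k^{t,s}=0$, then $M_k^{t,s}=c_k^{t-1,s}M_k^{t-1,s}$, and I need induction from step $t-1$. When $c_k^{t-1,s}=1$, observe that $M(\alpha^{t})^2=M^{t-1}(\alpha^{t-1})^2(1-\alpha^t)$ forces $\alpha^t<\alpha^{t-1}$. The threshold $\propto\alpha^{-\frac{1-\nu}{1+\nu}}$ is therefore nondecreasing in $t$, and the inductive bound carries over.

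\textbf{Main obstacle.} The hard case is $c_k^{t-1,s}>1$, the correction designed to make $M_k^{t,s}(\alpha_k^{t,s})^2=L_k^s$ exactly at termination. I will show that this case happens only when the next step is the final one. Substituting $M=cM^{t-1}$ into \eqref{eq:alpha_recur}, the specific formula for $c$ yields $M_k^{t,s}(\alpha_k^{t,s})^2=L_k^s$, which triggers termination at $t=T_k^s$. That index is excluded from the claimed range $t\le T_k^s-1$. Verifying this algebraic identity, together with the side condition involving $\sqrt{\alpha^2+4}$ that ensures $c\ge1$, is the computation I expect to be most delicate.
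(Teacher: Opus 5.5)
Your proposal is correct and follows essentially the same route as the paper's proof. Both use Nesterov's inexact-smoothness lemma with $\delta=\gamma_k^s\alpha_k^{t,s}\varepsilon$, the doubling argument for $L_k$, and the bound $M_k^{0,s}\le 2L\le 2\overline{M}_{\nu,\varepsilon}$ for $t=1$. Both then induct on $t$, where the only new case, a correction $c_k^{t-1,s}>1$ accepted without doubling, forces $A_k^{t,s}=L_k^s$ and hence $t=T_k^s$, outside the claimed range. Your explicit check that the failed trial $M_k^{t,s}/2$ has a larger $\alpha$, and hence a smaller threshold, is a detail the paper leaves implicit, and it is what lets the factor-$2$ bound carry over to $\alpha_k^{t,s}$.
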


Second, as in Algorithm~\ref{alg:GDS_linesearch_subroutine}, the number of inner iterations $T_k^s$ in Subroutine~\ref{alg:universal_subroutine_linesearch} must be chosen adaptively to enforce a key balancing condition. Due to the accelerated structure of Subroutine~\ref{alg:universal_subroutine_linesearch}, however, this task is more delicate, and the strategy used in Subroutine~\ref{alg:GDS_linesearch_subroutine} is no longer applicable. 
We therefore introduce a new mechanism involving the coefficient
$c_k^{t,s}$ to enforce the terminal balancing condition
\(
    M_k^{T_k^s,s}(\alpha_k^{T_k^s,s})^2=L_k^s .
\)
Specifically, the procedure starts from
$M_k^{1,s}(\alpha_k^{1,s})^2\ge L_k^s$, decreases
$M_k^{t,s}(\alpha_k^{t,s})^2$ over the inner iterations, and adjusts
$c_k^{t,s}$ when needed so that the balancing equality holds exactly at the final step.
As shown in Proposition~\ref{thm:Ak1s_ge_Lks}, this balancing relation links the number of inner iterations
to the local estimates of the problem constants and is essential for achieving the desired complexity bounds.

\subsection{Convergence analysis of UGS}\label{sec:universal_convergence_properties}
This subsection establishes the convergence properties of UGS. Throughout the rest of the paper, for any $k,s,t\ge1$, define
\begin{align}
\label{eq:AGamma}
A_k^{t,s} := M_k^{t,s}(\alpha_k^{t,s})^2,\
\Gamma_k^{s} := L_k^{s}(\gamma_k^{s})^2,\
\xltil_k^{t,s}
:= (1-\alpha_k^{t,s})\,\xtil_k^{t-1,s}
+ \alpha_k^{t,s} x_k^{t-1,s}.
\end{align}
Noting 
\eqref{eq:alpha_recur}
and \eqref{eq:gamma_recur}, we have, respectively, 
\begin{align}
    \label{eq:AGamma_recur}
    A_k^{t,s} = A_k^{t-1,s}(1-\alpha_k^{t,s}),\ \forall t\ge 2, k,s\ge1,
    \;
    \Gamma_k^s = \Gamma_{k-1}^{S_{k-1}}(1-\gamma_k^s),\ \forall k\ge 2, s\ge1.
\end{align}

The following two lemmas will be used for convergence analysis. The proof of Lemma \ref{thm:recursion_property} is deferred to the appendix, and Lemma \ref{thm:three_point_Breg} is well-known (see, e.g., \cite{lan2020first}).
\begin{lemma}\label{thm:recursion_property}
    Let $\{E_\tau\}_{\tau\ge1}$ be a nondecreasing sequence of positive scalars.
    Set $\lambda_1=1$,
    and for $\tau\ge2$, let $\lambda_\tau\ge0$ satisfy $\Lambda_\tau = \Lambda_{\tau-1}(1-\lambda_\tau)$,
    where $\Lambda_\tau = E_\tau(\lambda_\tau)^2$, $\forall \tau\ge1$. Then, the following statements hold:
   \begin{enumerate}[label=(\alph*)]
       \item For all $\tau \ge 2$, we have $\lambda_\tau \in (0,1)$ and hence $0<\Lambda_\tau < \Lambda_{\tau-1}$. \label{item:lambda_in_01_Lambda_decrease}
       \item Let sequence $\{b_\tau\}_{\tau\ge0}$ and $\{d_\tau\}_{\tau\ge1}$ satisfy $b_\tau = (1-\lambda_\tau)b_{\tau-1}+\lambda_\tau d_\tau$, for all $\tau\ge1$. Then, for any $\tau\ge1$, it holds that $b_\tau= \Lambda_\tau \sum_{i=1}^\tau \lambda_i d_i/\Lambda_i$. Specifically, when $b_\tau=1$, for $\tau\ge0$ and $d_\tau=1$ for all $\tau\ge1$, we have $1=\Lambda_\tau \sum_{i=1}^\tau \lambda_i /\Lambda_i$.\label{item:AT_eq_sum_a_over_A}
       \item For $\tau\ge2$, $\lambda_\tau$ is increasing in $\Lambda_{\tau-1}$ and decreasing in $E_\tau$. Consequently, for fixed $\Lambda_{\tau-1}$, the quantity $\Lambda_\tau=\Lambda_{\tau-1}(1-\lambda_\tau)$ is increasing in $E_\tau$.\label{item:lambda_Lambda_E}
       \item $\lambda_\tau$ strictly decreases in $\tau$.\label{item:lambda_decrease}
        \item For all $\tau\ge1$, $\lambda_\tau\le2/(\tau+1)$.\label{item:lambda_bound}
       \item Fix any  $E_\text{fix}$,
       such that $\Lambda_{\tau}\le E_\text{fix} < \Lambda_{\tau-1}$, for some $\tau\ge2$. Let  $c = E_\text{fix}/[(1-E_\text{fix}/\Lambda_{\tau-1})^2 E_\tau]$, and let $\hat{\lambda}_\tau>0$ solve $cE_\tau (\hat{\lambda}_\tau)^2 = \Lambda_{\tau-1}(1-\hat{\lambda}_\tau)$. Denote $\hat{\Lambda}_\tau := c E_\tau \hat{\lambda}^2$. Then $c\ge1$ and $\hat{\Lambda}_\tau = E_{\mathrm{fix}}$.\label{item:c_def}
       \item For any \(\tau\ge2\), let \(\overline{E}_\tau=qE_\tau\) for some \(q\ge1\), and let \(\overline{\lambda}_\tau>0\) satisfy
\(
\overline{E}_\tau(\overline{\lambda}_\tau)^2=\Lambda_{\tau-1}(1-\overline{\lambda}_\tau).
\)
Then,
\(
\lambda_\tau\le \sqrt{q}\overline{\lambda}_\tau.
\)
\label{item:lambda_compare_L_increase}
\item For any \(\tau\ge1\), if \(E_{\tau+1}=E_\tau\), then
\(
\lambda_{\tau+1}\ge \tfrac{2}{1+\sqrt{5}}\lambda_\tau.
\)
\label{item:lambda_same_E_ratio}
\item For all $\tau\ge1$, we have $\Lambda_\tau\ge E_1/\tau^2$.
\label{item:Gamma_k_ge_L1_over_ksq}
   \end{enumerate}
\end{lemma}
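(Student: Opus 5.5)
The plan is to treat the defining relation $E_\tau\lambda_\tau^2=\Lambda_{\tau-1}(1-\lambda_\tau)$ as a quadratic in $\lambda_\tau$ and use its explicit positive root
\[
\lambda_\tau=\frac{-\Lambda_{\tau-1}+\sqrt{\Lambda_{\tau-1}^2+4E_\tau\Lambda_{\tau-1}}}{2E_\tau}=\frac{2}{1+\sqrt{1+4E_\tau/\Lambda_{\tau-1}}},
\]
so that $\lambda_\tau$ is a decreasing function of the single ratio $r_\tau:=E_\tau/\Lambda_{\tau-1}$. Most items then follow from monotonicity in $r_\tau$.

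For \ref{item:lambda_in_01_Lambda_decrease}, the formula immediately gives $\lambda_\tau\in(0,1)$ whenever $\Lambda_{\tau-1}>0$. Since $\Lambda_1=E_1>0$, an induction yields $0<\Lambda_\tau=\Lambda_{\tau-1}(1-\lambda_\tau)<\Lambda_{\tau-1}$.

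For \ref{item:AT_eq_sum_a_over_A}, divide the recursion by $\Lambda_\tau$ and use $1-\lambda_\tau=\Lambda_\tau/\Lambda_{\tau-1}$. This gives $b_\tau/\Lambda_\tau=b_{\tau-1}/\Lambda_{\tau-1}+\lambda_\tau d_\tau/\Lambda_\tau$ for $\tau\ge2$. At $\tau=1$ we have $\lambda_1=1$, so $b_1=d_1$. Telescoping then gives the claim, and the special case is $b\equiv d\equiv1$.

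For \ref{item:lambda_Lambda_E}, the monotonicity of $\lambda_\tau$ is read off from $r_\tau$. For the consequence, note that $\Lambda_\tau=\Lambda_{\tau-1}(1-\lambda_\tau)$ and $\lambda_\tau$ decreases in $E_\tau$.

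For \ref{item:lambda_decrease}, I would show that $r_{\tau+1}>r_\tau$, which needs $E_{\tau+1}\ge E_\tau$ together with $\Lambda_\tau<\Lambda_{\tau-1}$. The case $\tau=1$ needs a separate check: $\lambda_2<1=\lambda_1$ by \ref{item:lambda_in_01_Lambda_decrease}.

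For \ref{item:lambda_bound}, rewrite the relation as $\lambda_\tau^2/\Lambda_\tau=1/E_\tau$. Combined with the telescoping identity from \ref{item:AT_eq_sum_a_over_A}, this gives $1/\sqrt{\Lambda_\tau}-1/\sqrt{\Lambda_{\tau-1}}\ge \frac{1}{2}\cdot\frac{\lambda_\tau}{\sqrt{\Lambda_\tau}}$, using $\sqrt{\Lambda_{\tau-1}}\le\sqrt{\Lambda_\tau}/(1-\lambda_\tau)$ and the concavity of the square root. I expect a cleaner route, though: prove by induction that $\lambda_\tau\le 2/(\tau+1)$ from the formula. Since $\lambda_\tau$ decreases in $r_\tau$ and $r_\tau\ge E_{\tau-1}/\Lambda_{\tau-1}=1/\lambda_{\tau-1}^2$, we get $\lambda_\tau\le 2/(1+\sqrt{1+4/\lambda_{\tau-1}^2})$. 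Plugging in $\lambda_{\tau-1}\le 2/\tau$ gives $\sqrt{1+\tau^2}>\tau$, hence $\lambda_\tau<2/(\tau+1)$.

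Item \ref{item:Gamma_k_ge_L1_over_ksq} needs a lower bound on $\Lambda_\tau$, and $E$ may grow, so $\Lambda$ can shrink faster. This is the item I expect to be the main obstacle: is the statement even true for growing $E$? If $E_2$ is huge, then $\lambda_2\approx\sqrt{\Lambda_1/E_2}$ and $\Lambda_2\approx\Lambda_1$, so growth of $E$ actually slows the decrease of $\Lambda$. This is consistent with \ref{item:lambda_Lambda_E}: $\Lambda_\tau$ increases in $E_\tau$. So I would argue by comparison. Replacing every $E_\tau$ by $E_1$ can only decrease each $\Lambda_\tau$, by induction using \ref{item:lambda_Lambda_E} together with the fact that $\Lambda_\tau$ is increasing in $\Lambda_{\tau-1}$ for fixed $E$. (The latter holds because $\Lambda_\tau=E\lambda^2$ and $\lambda$ increases in $\Lambda_{\tau-1}$.) For constant $E=E_1$, the standard estimate $1/\sqrt{\Lambda_\tau}-1/\sqrt{\Lambda_{\tau-1}}\le 1/\sqrt{E_1}$ holds: it follows from $\sqrt{\Lambda_{\tau-1}}-\sqrt{\Lambda_\tau}\le\lambda_\tau\sqrt{\Lambda_{\tau-1}}$ and $\lambda_\tau=\sqrt{\Lambda_\tau/E_1}$. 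Summing gives $\Lambda_\tau\ge E_1/\tau^2$.

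For \ref{item:c_def}, plug in directly. Define $\hat\lambda:=1-E_{\rm fix}/\Lambda_{\tau-1}\in(0,1]$. Then $cE_\tau\hat\lambda^2=E_{\rm fix}=\Lambda_{\tau-1}(1-\hat\lambda)$, so $\hat\lambda$ is the unique positive root, and hence $\hat\Lambda_\tau=E_{\rm fix}$. For $c\ge1$, note that $\Lambda_\tau\le E_{\rm fix}$ means $1-\lambda_\tau\le 1-\hat\lambda$, i.e. $\hat\lambda\le\lambda_\tau$. Then $cE_\tau\hat\lambda^2=E_{\rm fix}\ge\Lambda_\tau=E_\tau\lambda_\tau^2\ge E_\tau\hat\lambda^2$.

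For \ref{item:lambda_compare_L_increase}, use $\lambda=2/(1+\sqrt{1+4r})$ and verify that $1+\sqrt{1+4qr}\le\sqrt q\,(1+\sqrt{1+4r})$, which holds since $q\ge1$.

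For \ref{item:lambda_same_E_ratio}, with equal $E$ we have $r_{\tau+1}=E/\Lambda_\tau=1/\lambda_\tau^2$. Hence $\lambda_{\tau+1}=2\lambda_\tau/(\lambda_\tau+\sqrt{\lambda_\tau^2+4})\ge 2\lambda_\tau/(1+\sqrt5)$, using $\lambda_\tau\le1$.
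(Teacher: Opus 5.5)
Your proof is correct. For parts (a)--(h) it follows the paper's argument. Everything is read off from the explicit root $\lambda_\tau=2/(1+\sqrt{1+4E_\tau/\Lambda_{\tau-1}})$. Your "cleaner route" for (e) is exactly the paper's induction via $E_\tau/\Lambda_{\tau-1}\ge 1/\lambda_{\tau-1}^2\ge \tau^2/4$. In (f), your direct check that $\hat\lambda=1-E_{\mathrm{fix}}/\Lambda_{\tau-1}$ is the unique positive root is a tidier version of the paper's substitution into the quadratic formula.

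The genuine difference is part (i). The paper chains it to (e) by induction: $\Lambda_\tau=\Lambda_{\tau-1}(1-\lambda_\tau)\ge \frac{E_1}{(\tau-1)^2}\cdot\frac{\tau-1}{\tau+1}=\frac{E_1}{\tau^2-1}\ge\frac{E_1}{\tau^2}$, which takes two lines. You instead compare with the constant sequence $E\equiv E_1$, using both monotonicities from (c), and then telescope $1/\sqrt{\Lambda}$. That is valid, but the comparison step is unnecessary. Your telescoping estimate works directly on the original sequence: $\sqrt{\Lambda_\tau}=\sqrt{1-\lambda_\tau}\,\sqrt{\Lambda_{\tau-1}}\ge(1-\lambda_\tau)\sqrt{\Lambda_{\tau-1}}$ together with $\lambda_\tau=\sqrt{\Lambda_\tau/E_\tau}$ gives $1/\sqrt{\Lambda_\tau}-1/\sqrt{\Lambda_{\tau-1}}\le 1/\sqrt{E_\tau}\le 1/\sqrt{E_1}$.

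What your route buys is that it only needs $E_\tau\ge E_1$. It also yields the sharper bound $\Lambda_\tau^{-1/2}\le\sum_{i=1}^{\tau}E_i^{-1/2}$. The paper's route passes through (e), which uses $E_{\tau-1}\le E_\tau$ at every step. In exchange, the paper's route is shorter and reuses an item already proved.
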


\begin{lemma}[\cite{lan2020first} Lemma~3.5]\label{thm:three_point_Breg}
    Let the convex function $q:X\to\mathbb{R}$, the points $z,z'\in X$ and the scalars $\mu_1,\mu_2\ge0$ be given. Let $V(\cdot,\cdot)$ be defined in \eqref{eq:Breg_definition}. If $u^*\in\operatorname{argmin}_{u\in X} q(u)+ \mu_1 V(z,u)+\mu_2V(z',u)$,
    then for any $u\in X$, we have bound
    $
        q(u^*) + \mu_1V(z,u^*)+\mu_2V(z',u^*)
        \le
        q(u) + \mu_1V(z,u)+\mu_2V(z',u) -(\mu_1+\mu_2)V(u^*,u).
    $
\end{lemma}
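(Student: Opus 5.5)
The proof combines the first-order optimality condition for $u^*$ with the three-point identity for Bregman divergences, and then uses convexity of $q$. By the definition in \eqref{eq:Breg_definition}, for fixed $z$ the map $u\mapsto V(z,u)=\omega(u)-\omega(z)-\langle\nabla\omega(z),u-z\rangle$ is convex and differentiable, with gradient $\nabla\omega(u)-\nabla\omega(z)$. The objective $q(u)+\mu_1V(z,u)+\mu_2V(z',u)$ is therefore convex on $X$.

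\emph{Step 1 (optimality).} Since $u^*$ minimizes this objective over the closed convex set $X$, the standard optimality condition for convex minimization gives some $q'(u^*)\in\partial q(u^*)$ such that
\begin{equation*}
\langle q'(u^*)+\mu_1(\nabla\omega(u^*)-\nabla\omega(z))+\mu_2(\nabla\omega(u^*)-\nabla\omega(z')),\,u-u^*\rangle\ge 0,\quad\forall u\in X.
\end{equation*}
This step uses the subdifferential sum rule. It applies because $q$ is real-valued convex (hence continuous) and the $V$-terms are differentiable.

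\emph{Step 2 (three-point identity).} For any $w,u^*,u$, expanding the three divergences directly gives
\begin{equation*}
\langle\nabla\omega(u^*)-\nabla\omega(w),\,u-u^*\rangle=V(w,u)-V(w,u^*)-V(u^*,u).
\end{equation*}
All the $\omega$ values and the $\nabla\omega(w)$ inner products cancel in pairs, so this is a one-line computation. Apply it with $w=z$ and with $w=z'$.

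\emph{Step 3 (convexity).} Convexity of $q$ gives $q(u)\ge q(u^*)+\langle q'(u^*),u-u^*\rangle$. Combine this with Step 1 to get $q(u)-q(u^*)\ge-\mu_1\langle\cdot\rangle-\mu_2\langle\cdot\rangle$. Substituting the identities from Step 2 yields
\begin{equation*}
q(u)-q(u^*)\ge \mu_1[V(z,u^*)+V(u^*,u)-V(z,u)]+\mu_2[V(z',u^*)+V(u^*,u)-V(z',u)].
\end{equation*}
Rearranging gives the claimed bound. Note that $\mu_1,\mu_2\ge0$ is not needed for the algebra; it is what makes the objective convex and so justifies Step 1.

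The only delicate point is Step 1. It requires justifying the optimality condition, namely existence of the minimizer (assumed in the statement) and the sum rule with the normal cone of $X$. These hold under the standing assumptions that $\omega$ is differentiable on $X$ and $q$ is finite-valued convex. Everything else is direct algebra.
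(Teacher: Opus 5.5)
Your proof is correct and is the standard argument. The paper does not reprove this lemma but cites \cite{lan2020first}, Lemma~3.5, and that proof combines exactly what you use: the first-order optimality condition at $u^*$, the three-point identity for $V$, and convexity of $q$. The step you flag as delicate can also be done without the sum rule: compare the objective at $u^*$ and at $u^*+\lambda(u-u^*)\in X$, bound $q(u^*+\lambda(u-u^*))\le(1-\lambda)q(u^*)+\lambda q(u)$, divide by $\lambda$ and let $\lambda\downarrow 0$, which gives $q(u)-q(u^*)+\langle \mu_1(\nabla\omega(u^*)-\nabla\omega(z))+\mu_2(\nabla\omega(u^*)-\nabla\omega(z')),u-u^*\rangle\ge 0$ directly; this route shows that neither the sum rule nor the sign of $\mu_1,\mu_2$ is needed there.
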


Proposition~\ref{thm:universal_subroutine_convergence} establishes the convergence of
Subroutine~\ref{alg:universal_subroutine_linesearch}; the resulting bound depends implicitly on
the number $T_k^s$ of inner iterations.
\begin{proposition}\label{thm:universal_subroutine_convergence}
    Fix any $x\in X$, $k\ge1$ and $s\ge1 $. At the termination of Subroutine~\ref{alg:universal_subroutine_linesearch}, we have 
\begin{equation*}
\begin{aligned}
        &f(\xub_k^s) - f(z_k^s) + \langle \nabla g(\xlb_k^s), \xub_k^s-z_k^s\rangle-\tfrac{\gamma_k^s\varepsilon}{2}  +\tfrac{\gamma_k^s\eta_k^s}{2}\|\xtil_k^s-x_{k-1}\|^2
    \\
    \le&
    \gamma_k^s(\eta_k^s+A_k^{T_k^s,s}p_k^{1,s}/A_k^{1,s})V(x_{k-1},x)
    -
    \gamma_k^s\alpha_k^{T_k^s,s}(p_k^{T_k^s,s}+\eta_k^s)V(x_k^s,x),
    \end{aligned}
    \end{equation*}
    where $z_k^s
:= (1-\gamma_k^s)\,\xub_{k-1}
+ \gamma_k^s x$.
\end{proposition}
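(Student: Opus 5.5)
We follow the standard one-step analysis of accelerated gradient sliding, adapted to the universal linesearch condition \eqref{eq:universal_f_ineq_s}, and then telescope over $t=1,\dots,T_k^s$ with weights $1/A_k^{t,s}$. The key affine identities are
\[
\xub_k^{t,s}-\xlb_k^{t,s}=\gamma_k^s\alpha_k^{t,s}(x_k^{t,s}-x_k^{t-1,s})
\]
and
\[
\xub_k^{t,s}-z=(1-\alpha_k^{t,s})(\xub_k^{t-1,s}-z)+\alpha_k^{t,s}\big(\gamma_k^s x_k^{t,s}+(1-\gamma_k^s)\xub_{k-1}-z\big),
\]
where we set $\xub_k^{0,s}:=(1-\gamma_k^s)\xub_{k-1}+\gamma_k^s\xtil_{k-1}$ and $z=z_k^s$. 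Using \eqref{eq:universal_f_ineq_s} and convexity of $f$ at $\xlb_k^{t,s}$, we write $\ell_f(u):=f(\xlb_k^{t,s})+\langle f'(\xlb_k^{t,s}),u-\xlb_k^{t,s}\rangle$. This gives
\begin{align*}
f(\xub_k^{t,s})\le{}&(1-\alpha_k^{t,s})f(\xub_k^{t-1,s})+\alpha_k^{t,s}\,\ell_f\big((1-\gamma_k^s)\xub_{k-1}+\gamma_k^s x_k^{t,s}\big)\\
&+\tfrac{M_k^{t,s}(\gamma_k^s\alpha_k^{t,s})^2}{2}\|x_k^{t,s}-x_k^{t-1,s}\|^2+\tfrac{\gamma_k^s\alpha_k^{t,s}\varepsilon}{2}.
\end{align*}
The $g$-linear term $\langle\nabla g(\xlb_k^s),\cdot\rangle$ is affine, so it splits in the same convex-combination way.

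Next we apply Lemma~\ref{thm:three_point_Breg} to the update \eqref{eq:universal_subroutine_update}. The choices are $q(u)=\langle\nabla g(\xlb_k^s)+f'(\xlb_k^{t,s}),u\rangle$, $\mu_1=\eta_k^s$, $\mu_2=p_k^{t,s}$ and comparison point $u=x$. Multiplying by $\gamma_k^s\alpha_k^{t,s}$ bounds the linear terms at $x_k^{t,s}$ versus $x$. The resulting terms are $\gamma_k^s\alpha_k^{t,s}[\eta_k^sV(x_{k-1},x)+p_k^{t,s}V(x_k^{t-1,s},x)-(\eta_k^s+p_k^{t,s})V(x_k^{t,s},x)]$, minus the nonnegative terms $\gamma_k^s\alpha_k^{t,s}[\eta_k^sV(x_{k-1},x_k^{t,s})+p_k^{t,s}V(x_k^{t-1,s},x_k^{t,s})]$. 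The quadratic error $\tfrac12 M_k^{t,s}(\gamma_k^s\alpha_k^{t,s})^2\|x_k^{t,s}-x_k^{t-1,s}\|^2$ is absorbed by $\gamma_k^s\alpha_k^{t,s}p_k^{t,s}V(x_k^{t-1,s},x_k^{t,s})$, since $p_k^{t,s}\ge\gamma_k^sM_k^{t,s}\alpha_k^{t,s}$ and $V\ge\frac12\|\cdot\|^2$. The extra part $L_k^s\gamma_k^s(1-\alpha_k^{t,s})/\alpha_k^{t,s}$ of $p_k^{t,s}$ is kept to make the prox terms telescope.

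We then divide the per-step inequality by $A_k^{t,s}$ and use \eqref{eq:AGamma_recur}, i.e. $(1-\alpha_k^{t,s})/A_k^{t,s}=1/A_k^{t-1,s}$ for $t\ge2$, with $\alpha_k^{1,s}=1$. The function-value gaps then telescope to $[f(\xub_k^s)+\langle\nabla g(\xlb_k^s),\xub_k^s\rangle-f(z)-\langle\nabla g(\xlb_k^s),z\rangle]/A_k^{T,s}$. Lemma~\ref{thm:recursion_property}\ref{item:AT_eq_sum_a_over_A} gives $A_k^{T,s}\sum_t\alpha_k^{t,s}/A_k^{t,s}=1$, so multiplying back by $A_k^{T,s}$ collapses the $\varepsilon$-terms to $\gamma_k^s\varepsilon/2$ and the $\eta_k^sV(x_{k-1},x)$ terms to $\gamma_k^s\eta_k^sV(x_{k-1},x)$. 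The same identity, combined with convexity of $\|\cdot-x_{k-1}\|^2$ and Lemma~\ref{thm:recursion_property}\ref{item:AT_eq_sum_a_over_A} applied to $\xtil_k^{t,s}$, turns the discarded $\eta_k^sV(x_{k-1},x_k^{t,s})$ terms into $\frac{\gamma_k^s\eta_k^s}{2}\|\xtil_k^s-x_{k-1}\|^2$. Here $\xtil_k^{t,s}$ is a convex combination of the $x_k^{\tau,s}$ with weights $A_k^{T,s}\alpha_k^{\tau,s}/A_k^{\tau,s}$, since the $\xtil_{k-1}$ weight vanishes because $\alpha_k^{1,s}=1$.

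The main obstacle is the telescoping of the $V(x_k^{t,s},x)$ terms. This requires
\[
\alpha_k^{t,s}(p_k^{t,s}+\eta_k^s)/A_k^{t,s}\ \ge\ \alpha_k^{t+1,s}p_k^{t+1,s}/A_k^{t+1,s},
\]
which after substituting $p_k^{t,s}$ reduces to a monotonicity condition on $\alpha_k^{t,s}p_k^{t,s}/A_k^{t,s}$ using $\eta_k^s=L_k^s\gamma_k^s$. We plan to verify it from $\alpha_k^{t,s}p_k^{t,s}/A_k^{t,s}=\gamma_k^s[L_k^s(1-\alpha_k^{t,s})/A_k^{t,s}+1]$, the identity $(1-\alpha_k^{t+1,s})/A_k^{t+1,s}=1/A_k^{t,s}$, and $A_k^{t,s}\ge L_k^s$ before termination. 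The $c_k^{t,s}$ mechanism guarantees the last of these, maintaining $A_k^{t,s}\ge A_k^{T,s}=L_k^s$, but the inequality $L_k^s/A_k^{t,s}\le1$ must be checked carefully. Once it holds, the sum leaves the first term $\gamma_k^s(\alpha_k^{1,s}p_k^{1,s}/A_k^{1,s})V(x_{k-1},x)$ and the last term $-\gamma_k^s\alpha_k^{T,s}(p_k^{T,s}+\eta_k^s)V(x_k^s,x)/A_k^{T,s}$. Multiplying by $A_k^{T,s}$ yields exactly the claimed bound.
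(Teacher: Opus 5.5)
Your proposal is correct and follows essentially the same route as the paper. It uses the same per-step bound from \eqref{eq:universal_f_ineq_s}, convexity and Lemma~\ref{thm:three_point_Breg}, absorbs the quadratic term via $p_k^{t,s}\ge\gamma_k^sM_k^{t,s}\alpha_k^{t,s}$, and telescopes with weights $A_k^{T_k^s,s}/A_k^{t,s}$ using Lemma~\ref{thm:recursion_property}\ref{item:AT_eq_sum_a_over_A}. The step you flag as the main obstacle is in fact an exact identity, which is what the paper uses: for $t\ge2$, $\alpha_k^{t,s}p_k^{t,s}/A_k^{t,s}=\alpha_k^{t-1,s}(p_k^{t-1,s}+\eta_k^s)/A_k^{t-1,s}=\gamma_k^s(1+L_k^s/A_k^{t-1,s})$, so neither $A_k^{t,s}\ge L_k^s$ nor the $c_k^{t,s}$ mechanism is needed here.
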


\begin{proof}
Fix any $t\ge1$. By the definitions of $\xub_k^{t,s}$, $\xlb_k^{t,s}$, $\xtil_k^{t,s}$, $\xltil_k^{t,s}$ and $z_k^s$, we observe that 
$\xub_k^{t,s}-\xlb_k^{t,s}
=
\gamma_k^s\alpha_k^{t,s}(x_k^{t,s}-x_k^{t-1,s})$,
and
$\xub_k^{t,s}-(1-\alpha_k^{t,s})\xub_k^{t-1,s}-\alpha_k^{t,s} z_k^s
=
\gamma_k^s\alpha_k^{t,s}(x_k^{t,s} - x)$.
Utilizing \eqref{eq:universal_f_ineq_s}, convexity of $f(\cdot)$ and the above observations yields
\begin{equation*}\label{eq:universal_f_expend}
\begin{aligned}
&f(\xub_k^{t,s}) - (1-\alpha_k^{t,s})f(\xub_k^{t-1,s}) - \alpha_k^{t,s} f(z_k^s) + \langle \nabla g(\xlb_k^s), \xub_k^{t,s}-(1-\alpha_k^{t,s})\xub_k^{t-1,s}-\alpha_k^{t,s} z_k^s \rangle
\\
\le&
f(\xlb_k^{t,s}) +\langle f'(\xlb_k^{t,s}) , \xub_k^{t,s}-\xlb_k^{t,s}\rangle +\tfrac{M_k^{t,s}}{2}\|\xub_k^{t,s}-\xlb_k^{t,s}\|^2 + \tfrac{\gamma_k^s\alpha_k^{t,s}\varepsilon}{2}
\\
&- (1-\alpha_k^{t,s}) [f(\xlb_k^{t,s}) + \langle f'(\xlb_k^{t,s}), \xub_k^{t-1,s}-\xlb_k^{t,s}\rangle]
-\alpha_k^{t,s}[f(\xlb_k^{t,s}) + \langle f'(\xlb_k^{t,s}), z_k^s-\xlb_k^{t,s}\rangle]
\\
&+ \langle \nabla g(\xlb_k^s), \xub_k^{t,s}-(1-\alpha_k^{t,s})\xub_k^{t-1,s}-\alpha_k^{t,s} z_k^s \rangle
\\
\le&
 \gamma_k^s\alpha_k^{t,s}\langle f'(\xlb_k^{t,s})+\nabla g(\xlb_k^s), x_k^{t,s} - x \rangle
+  \tfrac{M_k^{t,s}(\gamma_k^s\alpha_k^{t,s})^2}{2}\|x_k^{t,s}-x_k^{t-1,s}\|^2+\tfrac{\gamma_k^s\alpha_k^{t,s}\varepsilon}{2}.
\end{aligned}
\end{equation*}
Here we make two observations. First, Lemma~\ref{thm:three_point_Breg} applied to \eqref{eq:universal_subroutine_update} yields
\begin{equation*}
\begin{aligned}
&\langle
f'(\xlb_k^{t,s})+\nabla g(\xlb_k^s),
x_k^{t,s} - x
 \rangle
 -
 p_k^{t,s} (V(x_k^{t-1,s},x) -V(x_k^{t-1,s},x_k^{t,s})- V(x_k^{t,s},x) )
\\
\le\ &
\eta_k^s (V(x_{k-1},x)-V(x_{k-1},x_k^{t,s}) - V(x_k^{t,s},x) ).
\end{aligned}
\end{equation*}
Second, by the definition of $p_k^{t,s}$ in Subroutine~\ref{alg:universal_subroutine_linesearch}
and Lemma~\ref{thm:recursion_property}\ref{item:lambda_in_01_Lambda_decrease},
we have $p_k^{t,s}\ge M_k^{t,s}\gamma_k^s\alpha_k^{t,s}$. Thus, by
\eqref{eq:Breg_definition},
\(
 {M_k^{t,s}(\gamma_k^s\alpha_k^{t,s})^2}\|x_k^{t,s}-x_k^{t-1,s}\|^2/2\le p_k^{t,s}\gamma_k^s\alpha_k^{t,s}V(x_k^{t-1,s}, x_k^{t,s}).   
\)
Applying the two observations we have
{\small 
\begin{equation}\label{eq:universal_single_f}
\begin{aligned}
&f(\xub_k^{t,s}) - (1-\alpha_k^{t,s})f(\xub_k^{t-1,s}) - \alpha_k^{t,s} f(z_k^s)
+  \langle \nabla g(\xlb_k^s), \xub_k^{t,s}-(1-\alpha_k^{t,s})\xub_k^{t-1,s}-\alpha_k^{t,s} z_k^s \rangle
\\
\le&
\gamma_k^s \alpha_k^{t,s}
 [
\eta_k^s (V(x_{k-1},x) - V(x_{k-1},x_k^{t,s}) )
+p_k^{t,s}V(x_k^{t-1,s},x)
- (p_k^{t,s}+\eta_k^s)V(x_k^{t,s},x)
 ]
+ \tfrac{\gamma_k^s\alpha_k^{t,s}\varepsilon}{2}.
\end{aligned}
\end{equation}
}
Noting the definition of $A_k^{t,s}$ in \eqref{eq:AGamma} and its recursion \eqref{eq:AGamma_recur}, given $p_k^{t,s}$ in Subroutine \ref{alg:universal_subroutine_linesearch} and $\eta_k^s$
in Algorithm~\ref{alg:universal_linesearch}, we have
\(
 \alpha_k^{t,s}p_k^{t,s}/A_k^{t,s}
=
 \alpha_k^{t-1,s}(p_k^{t-1,s}+\eta_k^s)/A_k^{t-1,s}
\)
for all $t\ge 2$. 
Hence,  multiplying both sides of \eqref{eq:universal_single_f} by $A_k^{T_k^s,s}/A_k^{t,s}$, summing over $t=1,\dots,T_k^s$, and using $\alpha_k^{1,s}=1$, we have
\begin{align*}
&f(\xub_k^{T_k^s,s}) - f(z_k^s) + \langle \nabla g(\xlb_k^s), \xub_k^{T_k^s,s}-z_k^s\rangle +\gamma_k^s\alpha_k^{T_k^s,s}(p_k^{T_k^s,s}+\eta_k^s)V(x_k^{T_k^s,s},x)
\\
\le & A_k^{T_k^s,s}\sum\nolimits_{t=1}^{T_k^s}\tfrac{\gamma_k^s \alpha_k^{t,s}}{A_k^{t,s}}
 [\eta_k^s(V(x_{k-1},x) - V(x_{k-1},x_k^{t,s}) )+ \tfrac{\varepsilon}{2}]
 + \tfrac{\gamma_k^s A_k^{T_k^s,s}p_k^{1,s}}{A_k^{1,s}}V(x_k^{0,s},x)
\\
\le&
\gamma_k^s\eta_k^s (V(x_{k-1},x)- \tfrac{1}{2}\|\xtil_k^{T_k^s,s}-x_{k-1}\|^2 )
+ \tfrac{\gamma_k^s A_k^{T_k^s,s}p_k^{1,s}}{A_k^{1,s}}V(x_k^{0,s},x)
+ \tfrac{\gamma_k^s\varepsilon}{2}.
\end{align*}
The above derivation also uses the following facts with the help of
Lemma~\ref{thm:recursion_property}~\ref{item:AT_eq_sum_a_over_A}:
(i) \(
A_k^{T_k^s,s}\sum_{t=1}^{T_k^s} \alpha_k^{t,s}/A_k^{t,s} = 1
\), (ii)
\(
\xtil_k^{T_k^s,s}=A_k^{T_k^s,s}\sum_{t=1}^{T_k^s} \alpha_k^{t,s} x_k^{t,s}/A_k^{t,s}
\), (iii)
\(
 \|\xtil_k^{T_k^s,s}-x_{k-1}\|^2/A_k^{T_k^s,s}
\le
\sum\nolimits_{t=1}^{T_k^s} \alpha_k^{t,s}\|x_k^{t,s}-x_{k-1}\|^2/A_k^{t,s}
\le
2\sum\nolimits_{t=1}^{T_k^s} \alpha_k^{t,s}V(x_{k-1},x_k^{t,s})/A_k^{t,s}.
\)
Then, 
\(
\xub_k^s = \xub_k^{T_k^s,s},
\xtil_k^s = \xtil_k^{T_k^s,s},
x_k^s = x_k^{T_k^s,s},
x_{k-1}=x_k^{0,s}
\)
in subroutine
yield the conclusion.
 \end{proof}

Now we establish the convergence property of Algorithm~\ref{alg:universal_linesearch} in Proposition~\ref{thm:universal_outer_convergence}.
\begin{proposition}\label{thm:universal_outer_convergence}
    Fix any $x\in X$. Running $N$ iterations in Algorithm~\ref{alg:universal_linesearch} yields 
    \vspace{-10pt}
{\small \begin{equation*}
    \begin{aligned}
        & ( \Gamma_N^{S_N})^{-1} \left[ f(\xub_N) + g(\xub_N)-f(x)-g(x)-\tfrac{\varepsilon}{2} \right] 
         \\
         \le&
        \sum\nolimits_ {k=1}^N
         [
        \tfrac{\gamma_k^{S_k}}{\Gamma_k^{S_k}}(\eta_k^{S_k}+\tfrac{A_k^{T_k^{S_k},S_k}p_k^{1,S_k}}{A_k^{1,S_k}})
        V(x_{k-1},x)
        -
        \tfrac{\gamma_k^{S_k}\alpha_k^{T_k^{S_k},{S_k}}(p_k^{T_k^{S_k},{S_k}}+\eta_k^{S_k})}{\Gamma_k^{S_k}}V(x_k,x)
        ] .
    \end{aligned}
\end{equation*} }
\end{proposition}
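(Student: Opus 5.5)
The plan is to combine the outer smoothness test \eqref{eq:universal_g_ineq_s} with Proposition~\ref{thm:universal_subroutine_convergence}. This gives a one-step recursion for $f(\xub_k)+g(\xub_k)$, which we then divide by $\Gamma_k^{S_k}$ and telescope using \eqref{eq:AGamma_recur}.

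\textbf{One-step bound.} Fix $k$ and write $s=S_k$, so that \eqref{eq:universal_g_ineq_s} holds. By construction,
\[
\xub_k-\xlb_k^s=\gamma_k^s(\xtil_k-x_{k-1}),
\qquad
z_k^s-\xlb_k^s = (1-\gamma_k^s)(\xub_{k-1}-\xlb_k^s)+\gamma_k^s(x-\xlb_k^s).
\]
Convexity of $g$ then gives
\[
g(\xlb_k^s)+\langle\nabla g(\xlb_k^s),z_k^s-\xlb_k^s\rangle \le (1-\gamma_k^s)g(\xub_{k-1})+\gamma_k^s g(x).
\]
Adding this to \eqref{eq:universal_g_ineq_s}, we obtain
\[
g(\xub_k)\le (1-\gamma_k^s)g(\xub_{k-1})+\gamma_k^s g(x)+\langle\nabla g(\xlb_k^s),\xub_k-z_k^s\rangle+\tfrac{L_k^s(\gamma_k^s)^2}{2}\|\xtil_k-x_{k-1}\|^2 .
\]
Since $\eta_k^s=L_k^s\gamma_k^s$, the last term equals $\tfrac{\gamma_k^s\eta_k^s}{2}\|\xtil_k-x_{k-1}\|^2$. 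This is exactly the term on the left-hand side of Proposition~\ref{thm:universal_subroutine_convergence}, so it cancels when we add the two inequalities. For $f$ we use convexity in the form $f(z_k^s)\le(1-\gamma_k^s)f(\xub_{k-1})+\gamma_k^s f(x)$. Summing, and writing $F=f+g$, we get
\[
F(\xub_k)-F(x)-\tfrac{\gamma_k\varepsilon}{2}\le (1-\gamma_k)[F(\xub_{k-1})-F(x)] + \gamma_k(\eta_k+A_k^{T_k,s}p_k^{1,s}/A_k^{1,s})V(x_{k-1},x)-\gamma_k\alpha_k^{T_k,s}(p_k^{T_k,s}+\eta_k)V(x_k,x),
\]
where all quantities are evaluated at $s=S_k$.

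\textbf{Telescoping.} Subtract $\tfrac{\varepsilon}{2}$ from both sides, using $\tfrac{\varepsilon}{2}=(1-\gamma_k)\tfrac{\varepsilon}{2}+\gamma_k\tfrac{\varepsilon}{2}$. This turns the inequality into a recursion for $\delta_k:=F(\xub_k)-F(x)-\tfrac{\varepsilon}{2}$:
\[
\delta_k\le(1-\gamma_k)\delta_{k-1}+(\text{the two }V\text{ terms above}).
\]
Now divide by $\Gamma_k^{S_k}$. For $k\ge2$, \eqref{eq:AGamma_recur} gives $(1-\gamma_k^{S_k})/\Gamma_k^{S_k}=1/\Gamma_{k-1}^{S_{k-1}}$. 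For $k=1$, we have $\gamma_1=1$, so the $\delta_0$ term vanishes. Summing from $k=1$ to $N$, the $\delta$ terms telescope and leave $\delta_N/\Gamma_N^{S_N}$ on the left. The right-hand side is exactly the claimed sum.

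\textbf{Main obstacle.} The delicate step is the bookkeeping in the one-step bound. Several things must line up: the identity $\eta_k^s=L_k^s\gamma_k^s$, so that the quadratic term from the $g$-linesearch cancels exactly with the $\tfrac{\gamma_k\eta_k}{2}\|\xtil_k-x_{k-1}\|^2$ term; the correct split of $\varepsilon/2$; and the handling of the initial case $k=1$ with $\gamma_1^s=1$. Beyond that, no further inequality is needed.
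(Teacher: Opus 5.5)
Your proposal is correct and follows the paper's proof essentially step for step. You combine the accepted outer test \eqref{eq:universal_g_ineq_s} with convexity of $g$ and $\eta_k^s=L_k^s\gamma_k^s$, then use convexity of $f$ at $z_k^s$ together with Proposition~\ref{thm:universal_subroutine_convergence}, so that the inner-product and quadratic terms cancel; finally you telescope after dividing by $\Gamma_k^{S_k}$ via \eqref{eq:AGamma_recur}. Your explicit $\varepsilon/2$ split and your treatment of $k=1$ through $\gamma_1^s=1$ are simply a more detailed version of the paper's closing appeal to Lemma~\ref{thm:recursion_property}~\ref{item:AT_eq_sum_a_over_A}.
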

\begin{proof}
Fix any $k\ge1$. From the description of Algorithm~\ref{alg:universal_linesearch} we have
\begin{align}\label{eq:universal_g_ineq}
         g(\xub_k) \le g(\xlb_k^{S_k}) + \langle \nabla g(\xlb_k^{S_k}),\xub_k-\xlb_k^{S_k}\rangle + \tfrac{L_k^{S_k}}{2}\|\xub_k-\xlb_k^{S_k}\|^2.
\end{align}
By definitions of $\xub_k$, $\xtil_k$ and $\xlb_k^{S_k}$, we observe
$
\xub_k-\xlb_k^{S_k} =
\gamma_k^{S_k}(\xtil_k-x_{k-1}).
$
Applying such observation, 
 inequalities \eqref{eq:Breg_definition} 
and \eqref{eq:universal_g_ineq}, the convexity of $g(\cdot)$, and the parameter choice $\eta_k^s = L_k^s\gamma_k^s$ in Algorithm~\ref{alg:universal_linesearch}, we have
{\small 
\begin{equation}\label{eq:universal_g_sum_ineq}
\begin{aligned}
    &g(\xub_k) - (1-\gamma_k^{S_k})g(\xub_{k-1})-\gamma_k^{S_k}g(x)
    \\
    \le & g(\xlb_k^{S_k}) + \langle \nabla g(\xlb_k^{S_k}), \xub_k-\xlb_k^{S_k}\rangle +  \tfrac{L_k^{S_k}(\gamma_k^{S_k})^2}{2}\|\xtil_k-x_{k-1}\|^2
    \\
    -& (1-\gamma_k^{S_k})( g(\xlb_k^{S_k}) + \langle \nabla g(\xlb_k^{S_k}), \xub_{k-1}-\xlb_k^{S_k}\rangle )
    - \gamma_k^{S_k} (g(\xlb_k^{S_k}) + \langle \nabla g(\xlb_k^{S_k}), x-\xlb_k^{S_k}\rangle)
    \\
    \le & \langle \nabla g(\xlb_k^{S_k}), \xub_k-(1-\gamma_k^{S_k})\xub_{k-1} - \gamma_k^{S_k} x\rangle +  \tfrac{\gamma_k^{S_k}\eta_k^{S_k}}{2}\|\xtil_k-x_{k-1}\|^2
    .
\end{aligned}
\end{equation}
}
Also, from convexity of $f$, Proposition~\ref{thm:universal_subroutine_convergence} (with $s=S_k$) and the relations $\xub_k=\xub_k^{S_k}$, $x_k=x_k^{S_k}$ and  $\xtil_k=\xtil_k^{S_k}$ introduced in Algorithm~\ref{alg:universal_linesearch}, we have
{\small
\begin{equation}
\begin{aligned}
    & f(\xub_k) - (1-\gamma_k^{S_k})f(\xub_{k-1}) - \gamma_k^{S_k} f(x) + \langle \nabla g(\xlb_k^{S_k}), \xub_k-(1-\gamma_k^{S_k})\xub_{k-1} - \gamma_k^{S_k} x\rangle
    \\
       \le &f(\xub_k) - f((1-\gamma_k^{S_k})\xub_{k-1} + \gamma_k^{S_k} x) + \langle \nabla g(\xlb_k^{S_k}), \xub_k-(1-\gamma_k^{S_k})\xub_{k-1} - \gamma_k^{S_k} x\rangle
    \\
    \le&
    \gamma_k^{S_k}(\eta_k^{S_k}+A_k^{T_k^{S_k},{S_k}}p_k^{1,{S_k}}/A_k^{1,{S_k}})V(x_{k-1},x)
    -
    \gamma_k^{S_k}\alpha_k^{T_k^{S_k},{S_k}}(p_k^{T_k^{S_k},{S_k}}+\eta_k^{S_k})V(x_k,x)
    \\
    & -\tfrac{\gamma_k^{S_k}\eta_k^{S_k}}{2}\|\xtil_k-x_{k-1}\|^2 + \tfrac{\gamma_k^{S_k}\varepsilon}{2}.
    \end{aligned}
    \end{equation}
}
Summing the above two results we obtain that
\begin{align*}
\begin{aligned}
    &f(\xub_k) + g(\xub_k)-f(x)-g(x)-(1-\gamma_k^{S_k})(f(\xub_{k-1}) + g(\xub_{k-1})-f(x)-g(x))-\tfrac{\gamma_k^{S_k}\varepsilon}{2}
    \\
    \le&
    \gamma_k^{S_k}(\eta_k^{S_k}+A_k^{T_k^{S_k},S_k}p_k^{1,S_k}/A_k^{1,S_k})V(x_{k-1},x)
    -
    \gamma_k^{S_k}\alpha_k^{T_k^{S_k},{S_k}}(p_k^{T_k^{S_k},{S_k}}+\eta_k^{S_k})V(x_k,x).
\end{aligned}
\end{align*}
The conclusion then follows from the recursion of $\Gamma_k^s$ in \eqref{eq:AGamma_recur}
and Lemma~\ref{thm:recursion_property}~\ref{item:AT_eq_sum_a_over_A}.
\end{proof}

The preceding proposition yields the following property of
Algorithm~\ref{alg:universal_linesearch}.
\begin{theorem}\label{thm:universal_parameters_and_convergence}
Fix any $x\in X$. Running $N$ iterations in Algorithm~\ref{alg:universal_linesearch} yields 
$
 f(\xub_N) + g(\xub_N)-f(x)-g(x)
         \le
         \tfrac{\varepsilon}{2}+
         \Gamma_N^{S_N}\sum\nolimits_ {k=1}^N
        (1+\tfrac{A_k^{T_k^{S_k},S_k}}{L_k^{S_k}})
        (V(x_{k-1},x)
        -
        V(x_k,x)).
$
Specifically, if $T_k^{S_k}$ is chosen so that $A_k^{T_k^{S_k},S_k}/ L_k^{S_k}=1$, for all $k\ge1$, then
\begin{align}\label{eq:universal_final_bound}
    f(\xub_N) + g(\xub_N)-f(x)-g(x)
         \le
         \tfrac{\varepsilon}{2}+
         2\Gamma_N^{S_N}
        V(x_{0},x).
\end{align}
\end{theorem}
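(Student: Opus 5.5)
The plan is to start from Proposition~\ref{thm:universal_outer_convergence}, multiply both sides by $\Gamma_N^{S_N}>0$, and simplify the two coefficients inside the sum. The claim is that each coefficient collapses to the same quantity $1+A_k^{T_k^{S_k},S_k}/L_k^{S_k}$. Positivity of $\Gamma_N^{S_N}$ follows from Lemma~\ref{thm:recursion_property}\ref{item:lambda_in_01_Lambda_decrease}, since $\Gamma_1^{s}=L_1^s>0$ and each later factor $1-\gamma_k^s$ lies in $(0,1)$. For brevity, fix $k$, write $s=S_k$ and $T=T_k^{s}$, and drop the indices where this is harmless.

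\emph{First coefficient.} Since $\eta_k^s=L_k^s\gamma_k^s$ and $\Gamma_k^s=L_k^s(\gamma_k^s)^2$, we get $\gamma_k^s\eta_k^s/\Gamma_k^s=1$. Because $\alpha_k^{1,s}=1$, the definition of $p$ in Subroutine~\ref{alg:universal_subroutine_linesearch} gives $p_k^{1,s}=\gamma_k^sM_k^{1,s}$, and by \eqref{eq:AGamma} $A_k^{1,s}=M_k^{1,s}$. Hence
\[
\tfrac{\gamma_k^s}{\Gamma_k^s}\cdot\tfrac{A_k^{T,s}p_k^{1,s}}{A_k^{1,s}}=\tfrac{(\gamma_k^s)^2A_k^{T,s}}{L_k^s(\gamma_k^s)^2}=\tfrac{A_k^{T,s}}{L_k^s}.
\]
So the coefficient of $V(x_{k-1},x)$ equals $1+A_k^{T,s}/L_k^s$.

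\emph{Second coefficient.} From the definition of $p$,
\[
p_k^{T,s}+\eta_k^s=L_k^s\gamma_k^s\tfrac{1-\alpha_k^{T,s}}{\alpha_k^{T,s}}+\gamma_k^sM_k^{T,s}\alpha_k^{T,s}+L_k^s\gamma_k^s=\tfrac{L_k^s\gamma_k^s}{\alpha_k^{T,s}}+\gamma_k^sM_k^{T,s}\alpha_k^{T,s}.
\]
Multiplying by $\gamma_k^s\alpha_k^{T,s}/\Gamma_k^s$ gives
\[
\tfrac{L_k^s(\gamma_k^s)^2+(\gamma_k^s)^2A_k^{T,s}}{L_k^s(\gamma_k^s)^2}=1+\tfrac{A_k^{T,s}}{L_k^s}.
\]
This matches the first coefficient, and substituting both into Proposition~\ref{thm:universal_outer_convergence} yields the general bound stated in the theorem.

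\emph{Special case.} When $A_k^{T_k^{S_k},S_k}=L_k^{S_k}$ for every $k$, all coefficients equal $2$. The sum then telescopes to $2\bigl(V(x_0,x)-V(x_N,x)\bigr)$, which is at most $2V(x_0,x)$ because $V\ge0$ by \eqref{eq:Breg_definition}; this gives \eqref{eq:universal_final_bound}.

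The only real obstacle is the bookkeeping in the second coefficient: one has to recognize that the $(1-\alpha)/\alpha$ term in $p_k^{T,s}$ combines with $\eta_k^s$ to give exactly $L_k^s\gamma_k^s/\alpha_k^{T,s}$. After that, everything else is substitution of definitions.
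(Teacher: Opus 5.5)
Your proposal is correct and follows essentially the paper's route. The paper also multiplies Proposition~\ref{thm:universal_outer_convergence} by $\Gamma_N^{S_N}$ and uses $\eta_k^s=L_k^s\gamma_k^s$, $\Gamma_k^s=L_k^s(\gamma_k^s)^2$, $A_k^{1,s}=M_k^{1,s}$ and $p_k^{1,s}=\gamma_k^sM_k^{1,s}$ (from $\alpha_k^{1,s}=1$) to collapse both coefficients to $1+A_k^{T_k^{S_k},S_k}/L_k^{S_k}$; your explicit second-coefficient computation and the telescoping with $V\ge 0$ just fill in details the paper leaves implicit.
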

\begin{proof}
The conclusions follow directly from Proposition~\ref{thm:universal_outer_convergence}
and the definitions of $\Gamma_k^s$ and $A_k^{t,s}$ in \eqref{eq:AGamma}, $\eta_k^s$
in Algorithm~\ref{alg:universal_linesearch}, and $p_k^{t,s}$ with $\alpha_k^{1,s}=1$
in Subroutine~\ref{alg:universal_subroutine_linesearch}.
\end{proof}

To derive explicit complexity from Theorem~\ref{thm:universal_parameters_and_convergence}, it remains to bound $\Gamma_N^{S_N}$ and the number of evaluations of $\nabla g$ and $f'$, which is the goal of the next subsection. Before proceeding, we highlight a key feature of Subroutine~\ref{alg:universal_subroutine_linesearch}. The result in
\eqref{eq:universal_final_bound} relies critically on the terminal identity
$A_k^{T_k^s,s}/L_k^s=1$. By
\eqref{eq:AGamma}, this is equivalent to choosing $T_k^s$ so that
\(
    M_k^{T_k^s,s}(\alpha_k^{T_k^s,s})^2=L_k^s.
\)
As discussed at the end of
Section~\ref{sec:GDS}, enforcing such terminal balance without knowing the problem
constants is the main difficulty in designing parameter-free GS methods. The
next proposition shows that
Subroutine~\ref{alg:universal_subroutine_linesearch} enforces this identity
automatically at termination.
\begin{proposition}\label{thm:Ak1s_ge_Lks}
Fix any $k \ge 1$ and $s \ge 1$. For any iteration  $t$ in Subroutine~\ref{alg:universal_subroutine_linesearch}, we have
\begin{align}
    \label{eq:universal_A_bound}
    L_k^s\le A_k^{t,s}\le
   (\tfrac{2}{t} )^{\tfrac{1+3\nu}{1+\nu}}
    2\overline{M}_{\nu,\varepsilon}
    (\gamma_k^s)^{-\tfrac{1-\nu}{1+\nu}}.
\end{align}
Moreover, Subroutine \ref{alg:universal_subroutine_linesearch} terminates after a finite number of iterations. Upon termination at the $T_k^s$-th iteration, we have $A_k^{T_k^s,s}=L_k^s$.
\end{proposition}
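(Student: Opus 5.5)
The plan is to prove the two-sided bound \eqref{eq:universal_A_bound} for every iteration $t$ the subroutine actually reaches. Finite termination then follows by contradiction, and the terminal identity $A_k^{T_k^s,s}=L_k^s$ is immediate from the termination test in line~\ref{state:universal_T_test1}: the only way to stop is when $M_k^{t,s}(\alpha_k^{t,s})^2=L_k^s$. Throughout I apply Lemma~\ref{thm:recursion_property} with $E_\tau=M_k^{\tau,s}$, $\lambda_\tau=\alpha_k^{\tau,s}$ and $\Lambda_\tau=A_k^{\tau,s}$. This is legitimate because $M_k^{t,s}=2^{i}c_k^{t-1,s}M_k^{t-1,s}$ with $i\ge0$, and $c_k^{t-1,s}\ge1$: either $c=1$, or $c\ge1$ by Lemma~\ref{thm:recursion_property}\ref{item:c_def}. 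Hence $\{M_k^{t,s}\}_t$ is nondecreasing.

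\textbf{Lower bound, by induction on $t$.} For $t=1$, $\alpha_k^{1,s}=1$ and $c_k^{0,s}=1$, so $A_k^{1,s}=M_k^{1,s}\ge M_k^{0,s}=\max\{M_0,L_k^s\}\ge L_k^s$. Suppose iteration $t-1$ did not terminate, so $A_k^{t-1,s}>L_k^s$. The test quantity $M_k^{t-1,s}[(\alpha)^4+2\alpha^2-\alpha^3\sqrt{\alpha^2+4}]/2$, with $\alpha=\alpha_k^{t-1,s}$, is exactly the value $\Lambda_t$ that would result from choosing $E_t=M_k^{t-1,s}$. To see this, solve $M\lambda^2=M\alpha^2(1-\lambda)$, which gives $\lambda=(-\alpha^2+\alpha\sqrt{\alpha^2+4})/2$, and then square. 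There are two cases.
\begin{itemize}
\item If this quantity exceeds $L_k^s$, then $c=1$. Since $M_k^{t,s}\ge M_k^{t-1,s}$, Lemma~\ref{thm:recursion_property}\ref{item:lambda_Lambda_E} (monotonicity of $\Lambda_t$ in $E_t$) gives $A_k^{t,s}>L_k^s$.
\item Otherwise, $\Lambda_t(E_t=M_k^{t-1,s})\le L_k^s<A_k^{t-1,s}$. Then $c_k^{t-1,s}$ is exactly the constant of Lemma~\ref{thm:recursion_property}\ref{item:c_def} with $E_{\rm fix}=L_k^s$. The candidate $E_t=cM_k^{t-1,s}$ yields $\Lambda_t=L_k^s$, and any doubling only increases $\Lambda_t$. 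Hence $A_k^{t,s}\ge L_k^s$, with equality precisely when $i_k^{t,s}=0$.
\end{itemize}

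\textbf{Upper bound.} Write $A_k^{t,s}=M_k^{t,s}(\alpha_k^{t,s})^2$ and apply Proposition~\ref{thm:universal_estimates_bound}:
\[
A_k^{t,s}\le 2\overline{M}_{\nu,\varepsilon}(\gamma_k^s)^{-\frac{1-\nu}{1+\nu}}(\alpha_k^{t,s})^{2-\frac{1-\nu}{1+\nu}}
=2\overline{M}_{\nu,\varepsilon}(\gamma_k^s)^{-\frac{1-\nu}{1+\nu}}(\alpha_k^{t,s})^{\frac{1+3\nu}{1+\nu}}.
\]
Using $\alpha_k^{t,s}\le 2/(t+1)\le 2/t$ from Lemma~\ref{thm:recursion_property}\ref{item:lambda_bound}, this gives the claim for $t=1$ (where $\alpha=1$ and $(2/1)^{\cdot}\ge1$) and for $2\le t\le T_k^s-1$.

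\textbf{Main obstacle: the terminal index $t=T_k^s$.} Proposition~\ref{thm:universal_estimates_bound} does not cover this index, since there $M$ has been inflated by $c$. At termination $A_k^{T_k^s,s}=L_k^s$, so what must be shown is $L_k^s\le(2/T)^{\frac{1+3\nu}{1+\nu}}2\overline{M}_{\nu,\varepsilon}(\gamma_k^s)^{-\frac{1-\nu}{1+\nu}}$. I would first try to compare with the uninflated choice $E_T=M_k^{T-1,s}$ or with the smallest $E$ that passes \eqref{eq:universal_f_ineq_s}. Lemma~\ref{thm:recursion_property}\ref{item:lambda_Lambda_E} together with the $c$-construction shows that the actual $\Lambda_T=L_k^s$ is no larger than the $\Lambda_T$ produced by the Nesterov-type linesearch bound $M\le 2\overline{M}_{\nu,\varepsilon}(\gamma\alpha)^{-\frac{1-\nu}{1+\nu}}$. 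Rerunning the Proposition~\ref{thm:universal_estimates_bound} argument for that comparison sequence then gives the same bound. If that comparison fails, I would instead use Lemma~\ref{thm:recursion_property}\ref{item:lambda_compare_L_increase} to control $\alpha_k^{T,s}$ by $\sqrt{c}$ times the comparison step.

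\textbf{Finite termination.} For every reached $t$ we have $L_k^s\le A_k^{t,s}\le C\,(2/t)^{\frac{1+3\nu}{1+\nu}}$, with $C$ independent of $t$ and exponent $\frac{1+3\nu}{1+\nu}\ge1$. So $t\le 2\,(C/L_k^s)^{\frac{1+\nu}{1+3\nu}}$, and the loop cannot run indefinitely. Since the only exit is the test $A_k^{t,s}=L_k^s$, the subroutine stops at some finite $T_k^s$ with $A_k^{T_k^s,s}=L_k^s$.
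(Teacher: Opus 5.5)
Your lower-bound induction, the finite-termination argument, and the terminal identity are fine. Your upper bound for non-terminal indices is a valid shortcut: combining Proposition~\ref{thm:universal_estimates_bound} with $\alpha_k^{t,s}\le 2/(t+1)$ from Lemma~\ref{thm:recursion_property}\ref{item:lambda_bound} gives $A_k^{t,s}\le 2\overline{M}_{\nu,\varepsilon}(\gamma_k^s)^{-\frac{1-\nu}{1+\nu}}(\tfrac{2}{t+1})^{\frac{1+3\nu}{1+\nu}}$ for $t\le T_k^s-1$. That is more direct than the paper's telescoping of $(A_k^{i,s})^{-\frac{1+\nu}{1+3\nu}}$.

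The gap is the terminal index $t=T_k^s$. You leave it open, and neither route you sketch works. The factor $c_k^{T_k^s-1,s}=L_k^s(\alpha_k^{T_k^s-1,s})^4M_k^{T_k^s-1,s}/[A_k^{T_k^s-1,s}-L_k^s]^2$ is unbounded: it blows up as $A_k^{T_k^s-1,s}\downarrow L_k^s$. So $M_k^{T_k^s,s}$ obeys no H\"older-type bound. Monotonicity of $\Lambda_\tau$ in $E_\tau$ (Lemma~\ref{thm:recursion_property}\ref{item:lambda_Lambda_E}), applied to any $E_T$ below the inflated value $c_k^{T_k^s-1,s}M_k^{T_k^s-1,s}$, only gives a lower bound on $L_k^s$, which is the wrong direction. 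Lemma~\ref{thm:recursion_property}\ref{item:lambda_compare_L_increase} with $q=c_k^{T_k^s-1,s}$ is vacuous for the same reason. This index is not a technicality: it is exactly the bound the paper uses to control $T_k^s$ in \eqref{eq:T_ks_bound_by_gammaGamma}.

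The missing idea is just the strict decrease $A_k^{t,s}<A_k^{t-1,s}$ from Lemma~\ref{thm:recursion_property}\ref{item:lambda_in_01_Lambda_decrease}.
\begin{itemize}
\item If $T_k^s=1$, Proposition~\ref{thm:universal_estimates_bound} bounds $A_k^{1,s}=M_k^{1,s}$ directly.
\item If $T_k^s\ge2$, your own bound at $t=T_k^s-1$ already uses $\alpha_k^{T_k^s-1,s}\le 2/T_k^s$. Hence $L_k^s=A_k^{T_k^s,s}<A_k^{T_k^s-1,s}\le(\tfrac{2}{T_k^s})^{\frac{1+3\nu}{1+\nu}}2\overline{M}_{\nu,\varepsilon}(\gamma_k^s)^{-\frac{1-\nu}{1+\nu}}$.
\end{itemize}
This is how the paper sidesteps the inflated $M_k^{T_k^s,s}$: its telescoping invokes the H\"older bound on $M_k^{i,s}$ only for $i\le t-1$, and it concludes via $A_k^{t,s}\le A_k^{t-1,s}$.
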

\begin{proof}
We first prove $L_k^s\le A_k^{t,s}$, $\forall t\ge1$ by induction. For $t=1$, we have $A_k^{1,s} = M_k^{1,s}(\alpha_k^{1,s})^2 = M_k^{1,s}\ge M_k^{0,s} \ge L_k^s$,
where the first inequality is guaranteed by the linesearch procedure in Subroutine~\ref{alg:universal_subroutine_linesearch} and the second by the definition of $M_k^{0,s}$ in Algorithm~\ref{alg:universal_linesearch}.
Suppose that  Subroutine~\ref{alg:universal_subroutine_linesearch} has not terminated at
iteration \(t\), it checks whether the smallest possible value of \(A_k^{t+1,s}\) (by Lemma~\ref{thm:recursion_property}\ref{item:lambda_Lambda_E}) at the
next inner iteration would be no larger than \(L_k^s\). This check is
implemented through the criterion
$M_k^{t,s}(\tilde\alpha_k^{t,s})^2\le L_k^{s}$, in which it is easy to verify that $(\tilde \alpha_k^{t,s})^2 := [(\alpha_k^{t,s})^4+2(\alpha_k^{t,s})^2-(\alpha_k^{t,s})^3\sqrt{(\alpha_k^{t,s})^2+4}]/2$ is the square of the positive root to the quadratic equation $(\tilde\alpha_k^{t,s})^2 = (\alpha_k^{t,s})^2(1-\tilde\alpha_k^{t,s})$.
Third, whenever the criterion
\(
    M_k^{t,s}(\tilde\alpha_k^{t,s})^2\le L_k^s
\)
is satisfied, the subroutine introduces a correction factor \(c_k^{t,s}\).
By Lemma~\ref{thm:recursion_property}\ref{item:c_def}, such factor $c_k^{t,s}\ge1$ makes the first 
trial value of \(A_k^{t+1,s}\) equal to \(L_k^s\).
Fourth, by Lemma~\ref{thm:recursion_property}\ref{item:lambda_Lambda_E}, the possible increase of \(M_k^{t+1,s}\) due to backtracking can only
increase the corresponding value of \(A_k^{t+1,s}\), which implies \(A_k^{t+1,s}\ge L_k^s\) .

Next, we prove the upper bound of $A_k^{t,s}$ in \eqref{eq:universal_A_bound}. By Lemma~\ref{thm:recursion_property}\ref{item:lambda_in_01_Lambda_decrease}, we have
\(
0<A_k^{t,s}< A_k^{t-1,s}, \forall\, t\ge2,
\)
which leads to
{\small 
\begin{equation}
\label{eq:uni_At_m_Atm1}
\begin{aligned}
& (
\tfrac{1}{(A_k^{t,s})^{\tfrac{1+\nu}{1+3\nu}}}
-
\tfrac{1}{(A_k^{t-1,s})^{\tfrac{1+\nu}{1+3\nu}}}
 )
 (
\tfrac{1}{(A_k^{t,s})^{\tfrac{2\nu}{1+3\nu}}}
+
\tfrac{1}{(A_k^{t-1,s})^{\tfrac{2\nu}{1+3\nu}}}
 )
>
\tfrac{1}{A_k^{t,s}}-\tfrac{1}{A_k^{t-1,s}}.
\end{aligned}
\end{equation}}
Also, using the relation $A_k^{t,s}=M_k^{t,s}(\alpha_k^{t,s})^2$ in \eqref{eq:AGamma} with the bound on $M_k^{t,s}$ in Proposition~\ref{thm:universal_estimates_bound}, we obtain,
\(
\alpha_k^{t,s}(A_k^{t,s})^{-\tfrac{1+\nu}{1+3\nu}}
\ge
 [
\tfrac{1}{2}\,
(\overline{M}_{\nu,\varepsilon})^{-1}
(\gamma_k^s)^{\tfrac{1-\nu}{1+\nu}}
 ]^{\tfrac{1+\nu}{1+3\nu}},
\)
for $1\le t\le T_k^s-1$.
Then, the above relation and \eqref{eq:uni_At_m_Atm1} indicate that for $2\le t\le T_k^s-1$,
$
1/(A_k^{t,s})^{\tfrac{1+\nu}{1+3\nu}}
-
1/(A_k^{t-1,s})^{\tfrac{1+\nu}{1+3\nu}}
>
\tfrac{1}{2}\alpha_k^{t,s}(A_k^{t,s})^{-\tfrac{1+\nu}{1+3\nu}}
\ge
\tfrac{1}{2}
 [
\tfrac{1}{2}\,
(\overline{M}_{\nu,\varepsilon})^{-1}
(\gamma_k^s)^{\tfrac{1-\nu}{1+\nu}}
 ]^{\tfrac{1+\nu}{1+3\nu}},
$
where the first inequality follows also from $A_k^{t,s}< A_k^{t-1,s}$ together with the recursion in \eqref{eq:AGamma_recur}.
Moreover, at $t=1$, $A_k^{1,s}=M_k^{1,s}$, so by Proposition~\ref{thm:universal_estimates_bound},
$
1/(A_k^{1,s})^{\tfrac{1+\nu}{1+3\nu}}
=
1/(M_k^{1,s})^{\tfrac{1+\nu}{1+3\nu}}
\ge
 [
\tfrac{1}{2}\,
(\overline{M}_{\nu,\varepsilon})^{-1}
(\gamma_k^s)^{\tfrac{1-\nu}{1+\nu}}
 ]^{\tfrac{1+\nu}{1+3\nu}}.
$
Combining the above bounds, we obtain
\begin{align*}
 \tfrac{1}{(A_k^{t,s})^{\tfrac{1+\nu}{1+3\nu}}}
\ge
 \tfrac{1}{(A_k^{t-1,s})^{\tfrac{1+\nu}{1+3\nu}}}
&=
 \tfrac{1}{(A_k^{1,s})^{\tfrac{1+\nu}{1+3\nu}}}
+
\sum\nolimits_{i=2}^{t-1}
 [
 \tfrac{1}{(A_k^{i,s})^{\tfrac{1+\nu}{1+3\nu}}}
-
 \tfrac{1}{(A_k^{i-1,s})^{\tfrac{1+\nu}{1+3\nu}}}
 ]
\\
&\ge
 \tfrac{t}{2}
 [
\tfrac{1}{2}\,
(\overline{M}_{\nu,\varepsilon})^{-1}
(\gamma_k^s)^{\tfrac{1-\nu}{1+\nu}}
 ]^{\tfrac{1+\nu}{1+3\nu}},\ \forall t\ge 2.
\end{align*}
Rearranging the above two inequalities immediately yields the upper bound in \eqref{eq:universal_A_bound}.

With \eqref{eq:universal_A_bound}, we can immediately conclude that Subroutine \ref{alg:universal_subroutine_linesearch} will terminate after a finite number of iterations. Otherwise, with $t\to \infty$ the bound \eqref{eq:universal_A_bound} would yield $L_k^s\le 0$, which is impossible since Algorithm \ref{alg:universal_linesearch} always produces $L_k^s>0$. Noting that  Subroutine~\ref{alg:universal_subroutine_linesearch} terminates when $A_k^{t,s}=L_k^s$, we conclude that there exists $T_k^s$ such that $A_k^{T_k^s,s}=L_k^s$, which happens exactly during termination.
\end{proof}

\subsection{Complexity of UGS}\label{sec:UGS_complexity_analysis}
Theorem~\ref{thm:universal_parameters_and_convergence}
shows that, once Subroutine~\ref{alg:universal_subroutine_linesearch} enforces
the terminal identity
\(A_k^{T_k^s,s}=L_k^s\), the accuracy of
Algorithm~\ref{alg:universal_linesearch} is determined by
\(\Gamma_N^{S_N}\). 
The following Proposition~\ref{thm:universal_GammaA_bound} then provides bounds on
\(\Gamma_N^{S_N}\), \(\Gamma_{N-1}^{S_{N-1}}\) and $\sum_{s=1}^{S_1}T_1^s$. The first bound controls the needed number of outer iterations, whereas the latter two will be needed later when counting the
number of evaluations of \(f'\). The proof of Proposition~\ref{thm:universal_GammaA_bound} is deferred to the appendix.
\begin{proposition}\label{thm:universal_GammaA_bound}
After running $N$ outer iterations in Algorithm~\ref{alg:universal_linesearch}, we have the following bounds: $\Gamma_N^{S_N}\le 8L/(N+1)^2$, $\forall N\ge1$
and
\begin{align}
    \label{eq:Gamma_bound}
    \Gamma_{N-1}^{S_{N-1}}
\le
2\,
\overline{M}_{\nu,\varepsilon}
 (8+4\sqrt{2} )^{\tfrac{1+3\nu}{1+\nu}}
 (\tfrac{1+\sqrt{5}}{2} )^{\tfrac{1-\nu}{1+\nu}}
/(\sum\nolimits_{k=2}^N\sum\nolimits_{s=1}^{S_k}T_k^s )^{\tfrac{1+3\nu}{1+\nu}}, \forall N\ge2
.
\end{align}
Moreover, the number of inner iterations in the first outer iteration is bounded by
\begin{align}\label{eq:universal_sumT_bound_k1}
\sum\nolimits_{s=1}^{S_1}T_1^s
\le
(2\overline{M}_{\nu,\varepsilon})^{\tfrac{1+\nu}{1+3\nu}}
 (4+2\sqrt{2})
/(L_0)^{\tfrac{1+\nu}{1+3\nu}}.
\end{align}
\end{proposition}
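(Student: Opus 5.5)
The plan is to read all three bounds off the recursion structure in Lemma~\ref{thm:recursion_property}, applied at two levels. At the outer level we take $E_k=L_k^s$ and $\lambda_k=\gamma_k^s$. At the inner level we convert the upper bound \eqref{eq:universal_A_bound} of Proposition~\ref{thm:Ak1s_ge_Lks} into a bound on $T_k^s$.

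\emph{Bound on $\Gamma_N^{S_N}$.} Since $L_k^s=2^{s-1}L_{k-1}\ge L_{k-1}$, the accepted estimates $\{L_k\}$ are nondecreasing, so Lemma~\ref{thm:recursion_property} applies with $E_k=L_k$ and $\Lambda_k=\Gamma_k^{S_k}$. From $\Gamma_k=\Gamma_{k-1}(1-\gamma_k)$ and $\Gamma_k<\Gamma_{k-1}$ we get
\[
\Gamma_k^{-1/2}-\Gamma_{k-1}^{-1/2}=\frac{\Gamma_{k-1}-\Gamma_k}{\sqrt{\Gamma_k\Gamma_{k-1}}\,\bigl(\sqrt{\Gamma_k}+\sqrt{\Gamma_{k-1}}\bigr)}\ge\frac{\gamma_k}{2\sqrt{\Gamma_k}}=\frac{1}{2\sqrt{L_k}}\ge\frac{1}{2\sqrt{2L}}.
\]
Here the last step uses $L_k\le 2L$ from Proposition~\ref{thm:universal_estimates_bound}. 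Since $\Gamma_1=L_1\le 2L$, telescoping gives $\Gamma_N^{-1/2}\ge (N+1)/(2\sqrt{2L})$, which is the claim $\Gamma_N^{S_N}\le 8L/(N+1)^2$.

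\emph{Per-call inner count.} Write $p:=(1+\nu)/(1+3\nu)\in[1/2,1]$. Fix $k,s$ and apply \eqref{eq:universal_A_bound} at $t=T_k^s$, where $A_k^{T_k^s,s}=L_k^s$. If $T_k^s\ge2$ this gives $T_k^s\le 2\,(2\overline M_{\nu,\varepsilon})^p(\gamma_k^s)^{-(1-\nu)/(1+3\nu)}(L_k^s)^{-p}$. Substituting $L_k^s=\Gamma_k^s/(\gamma_k^s)^2$, the exponent of $\gamma_k^s$ becomes $2p-(1-\nu)/(1+3\nu)=1$, so
\[
T_k^s\le 2\,\gamma_k^s\,\bigl(2\overline M_{\nu,\varepsilon}/\Gamma_k^s\bigr)^p.
\]
The case $T_k^s=1$ is absorbed by enlarging the constant, using $A_k^{1,s}=M_k^{1,s}\le 2\overline M_{\nu,\varepsilon}(\gamma_k^s)^{-(1-\nu)/(1+\nu)}$.

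\emph{First outer iteration, \eqref{eq:universal_sumT_bound_k1}.} For $k=1$ we have $\gamma_1^s=1$ and $\Gamma_1^s=L_1^s=2^{s-1}L_0$. Hence $T_1^s\le 2(2\overline M_{\nu,\varepsilon}/L_0)^p\,2^{-p(s-1)}$. Since $2^{-p}\le 2^{-1/2}$, summing the geometric series over $s$ gives the factor $2/(1-2^{-1/2})=4+2\sqrt2$.

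\emph{Bound \eqref{eq:Gamma_bound} for $k\ge2$.} We first sum over $s$. By Lemma~\ref{thm:recursion_property}\ref{item:lambda_Lambda_E}, $\Gamma_k^s\ge\Gamma_k^1$ for all $s$. Next, $\gamma_k^s=\sqrt{\Gamma_k^s/L_k^s}\le\sqrt{\Gamma_{k-1}/(2^{s-1}L_{k-1})}=2^{-(s-1)/2}\gamma_{k-1}$, so $\sum_s\gamma_k^s\le(2+\sqrt2)\gamma_{k-1}$. To compare $\Gamma_k^1$ with $\Gamma_{k-1}$, note that at $s=1$ the recursion constant is unchanged ($E_k=E_{k-1}=L_{k-1}$). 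Lemma~\ref{thm:recursion_property}\ref{item:lambda_same_E_ratio} then controls the ratio $\gamma_{k-1}/\gamma_k^1\le(1+\sqrt5)/2$, which should produce the factor $((1+\sqrt5)/2)^{(1-\nu)/(1+\nu)}$ in \eqref{eq:Gamma_bound}. The result is
\[
\textstyle\sum_s T_k^s\le C\,\gamma_{k-1}\,\bigl(\overline M_{\nu,\varepsilon}/\Gamma_{k-1}\bigr)^p .
\]

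We then sum over $k=2,\dots,N$, i.e.\ over indices $j=k-1=1,\dots,N-1$. As in the first step, $\Gamma_j=\Gamma_{j-1}(1-\gamma_j)$ with $p\le1$ gives $\Gamma_j^{-p}-\Gamma_{j-1}^{-p}\ge c\,\gamma_j\Gamma_j^{-p}$ for $j\ge2$. The $j=1$ term is $\gamma_1\Gamma_1^{-p}$ with $\gamma_1=1$, and it is dominated directly by $\Gamma_1^{-p}$. Telescoping then yields $\sum_{j=1}^{N-1}\gamma_j\Gamma_j^{-p}\le C'\,\Gamma_{N-1}^{-p}$. Raising $\sum_{k\ge2}\sum_s T_k^s\le C''(\overline M_{\nu,\varepsilon}/\Gamma_{N-1})^p$ to the power $1/p=(1+3\nu)/(1+\nu)$ gives \eqref{eq:Gamma_bound}.

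\emph{Main obstacle.} The difficult step is the $k\ge2$ per-iteration bound. It needs care with the restriction $2\le t\le T_k^s-1$ in Proposition~\ref{thm:universal_estimates_bound}, which governs the terminal step. It also requires comparing $\gamma_k^s$ and $\Gamma_k^s$ across the different $s$-trials against the previous accepted $\gamma_{k-1}$ and $\Gamma_{k-1}$, and doing so tightly enough to recover the stated constants $(8+4\sqrt2)$ and $((1+\sqrt5)/2)^{(1-\nu)/(1+\nu)}$. If the cleaner route above loses a constant, I would fall back to keeping the bound in the form $\gamma_k^{-(1-\nu)/(1+\nu)}$ and applying \ref{item:lambda_same_E_ratio} only to that factor.
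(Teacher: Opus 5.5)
Your architecture matches the paper's. You obtain the per-call bound $T_k^s\le 2(2\overline{M}_{\nu,\varepsilon})^{p}\gamma_k^s/(\Gamma_k^s)^{p}$, with $p=\tfrac{1+\nu}{1+3\nu}$, from \eqref{eq:universal_A_bound} at $t=T_k^s$. You then take a geometric sum over $s$, compare iteration $k$ with $k-1$ via Lemma~\ref{thm:recursion_property}\ref{item:lambda_same_E_ratio}, and telescope $(\Gamma_k^{S_k})^{-p}$ with factor $\tfrac12$. Your proof of $\Gamma_N^{S_N}\le 8L/(N+1)^2$ is correct with the exact constant; it telescopes $\Gamma^{-1/2}$, whereas the paper uses $\gamma_N^{S_N}\le 2/(N+1)$ together with $L_N\le 2L$. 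Your proof of \eqref{eq:universal_sumT_bound_k1} is also correct with the exact constant. The case $T_k^s=1$ needs no special treatment. \eqref{eq:universal_A_bound} holds at $t=1$ as well, because it follows from the bound on $M_k^{1,s}$ in Proposition~\ref{thm:universal_estimates_bound}, and it gives the same formula. ``Enlarging the constant'' would itself break the stated constants.

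The gap is in the $k\ge2$ step. Your main route does not give the constant in \eqref{eq:Gamma_bound}, and your assertion that it produces $((1+\sqrt5)/2)^{(1-\nu)/(1+\nu)}$ is false. Put $\phi=(1+\sqrt5)/2$. You bound the numerator by $\gamma_k^s\le 2^{-(s-1)/2}\gamma_{k-1}^{S_{k-1}}$ and the denominator by $\Gamma_k^s\ge\Gamma_k^1=L_{k-1}(\gamma_k^1)^2\ge\phi^{-2}\Gamma_{k-1}^{S_{k-1}}$. This gives
\[
\textstyle\sum_{s}\gamma_k^s/(\Gamma_k^s)^{p}\le(2+\sqrt2)\,\phi^{2p}\,\gamma_{k-1}^{S_{k-1}}/(\Gamma_{k-1}^{S_{k-1}})^{p}.
\]
Since $2p-\tfrac{1-\nu}{1+3\nu}=1$, this is exactly a factor $\phi$ worse than the paper's $(2+\sqrt2)\phi^{(1-\nu)/(1+3\nu)}$. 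After telescoping and raising to the power $1/p$, your constant contains $\phi^{2}$ instead of $\phi^{(1-\nu)/(1+\nu)}$. So your bound on $\Gamma_{N-1}^{S_{N-1}}$ is weaker by a factor $\phi^{(1+3\nu)/(1+\nu)}\in[\phi,\phi^2]$.

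The loss is structural: you use $\gamma_k^1$ in both directions, an upper bound $\gamma_{k-1}^{S_{k-1}}$ in the numerator and a lower bound $\gamma_{k-1}^{S_{k-1}}/\phi$ in the denominator. The identity $\gamma_k^s/(\Gamma_k^s)^p=(\gamma_k^s)^{(\nu-1)/(1+3\nu)}/(L_k^s)^p$ shows that only lower bounds on $\gamma$ are ever needed, because the exponent is nonpositive.

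Your fallback points in this direction, but it cannot be carried out with the ingredients you listed. In that form, the sum over $s$ needs a lower bound on $\gamma_k^s$, and your estimate $\gamma_k^s\le 2^{-(s-1)/2}\gamma_{k-1}^{S_{k-1}}$ goes the wrong way. The missing ingredient is Lemma~\ref{thm:recursion_property}\ref{item:lambda_compare_L_increase}:

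\begin{itemize}
\item Since $L_k^{s+1}=2L_k^s$ with the same $\Gamma_{k-1}^{S_{k-1}}$, it gives $\gamma_k^s\le\sqrt2\,\gamma_k^{s+1}$.
\item Hence $\gamma_k^{s+1}/(\Gamma_k^{s+1})^p\le \gamma_k^s/(\sqrt2\,(\Gamma_k^s)^p)$, and so $\sum_s\gamma_k^s/(\Gamma_k^s)^p\le(2+\sqrt2)\gamma_k^1/(\Gamma_k^1)^p$.
\item Then use $L_k^1=L_{k-1}$ and apply part \ref{item:lambda_same_E_ratio} only to $(\gamma_k^1)^{(\nu-1)/(1+3\nu)}$. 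This yields the factor $\phi^{(1-\nu)/(1+3\nu)}$.
\end{itemize}

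After that, your telescoping reproduces \eqref{eq:Gamma_bound} exactly: use $c=\tfrac12$ and absorb the $j=1$ term directly, which gives the factor $2$.
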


We next estimate the numbers of evaluations of \(\nabla g\) and \(f'\) required by Algorithm~\ref{alg:universal_linesearch} to complete \(N\) outer iterations. The corresponding bounds are established in Lemmas~\ref{thm:additional_geval_dueto_L} and~\ref{thm:additional_feval_dueto_M}, respectively; the proof of Lemma \ref{thm:additional_feval_dueto_M} is deferred to appendix.
\begin{lemma}\label{thm:additional_geval_dueto_L}
    To complete $N$  outer iterations, Algorithm~\ref{alg:universal_linesearch} requires at most 
    $N+\log_2(2L/L_0)$ evaluations of $\nabla g$.
\end{lemma}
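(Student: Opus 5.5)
The plan is a standard counting argument for backtracking linesearch. Each trial $s$ at outer iteration $k$ evaluates $\nabla g$ exactly once, namely $\nabla g(\xlb_k^s)$, which is passed to Subroutine~\ref{alg:universal_subroutine_linesearch}. Checking \eqref{eq:universal_g_ineq_s} needs only function values of $g$ and the gradient already computed at $\xlb_k^s$, so it adds no further $\nabla g$ evaluations. Hence the total number of $\nabla g$ evaluations over $N$ outer iterations is $\sum_{k=1}^N S_k$, and it suffices to show that $\sum_{k=1}^N S_k\le N+\log_2(2L/L_0)$.

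By line~\ref{state:universal_outeralg_kge2} of Algorithm~\ref{alg:universal_linesearch}, $L_k^s=2^{s-1}L_{k-1}$, and the accepted value satisfies $L_k=L_k^{S_k}=2^{S_k-1}L_{k-1}$. Equivalently, $S_k-1=\log_2(L_k/L_{k-1})$. Summing over $k=1,\dots,N$ telescopes to
\begin{equation*}
\sum\nolimits_{k=1}^N (S_k-1)=\log_2(L_N/L_0).
\end{equation*}
Proposition~\ref{thm:universal_estimates_bound} gives $L_N\le 2L$, so $\sum_{k=1}^N S_k\le N+\log_2(2L/L_0)$, which is the claim.

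The only point that needs care is confirming that each inner loop terminates, so that every outer trial is well defined and $S_k$ is finite. Termination of the subroutine follows from Proposition~\ref{thm:Ak1s_ge_Lks}. For the outer loop, once $L_k^s\ge L$ condition \eqref{eq:universal_g_ineq_s} holds by \eqref{eq:L}, so doubling stops with $L_k\le 2L$. This is exactly the bound in Proposition~\ref{thm:universal_estimates_bound}, and it needs no separate argument here.
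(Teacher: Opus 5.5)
Your proposal is correct and is essentially the paper's own argument: count the $\nabla g$ evaluations as $\sum_{k=1}^N S_k$, write $S_k = 1+\log_2(L_k/L_{k-1})$, telescope, and use $L_N\le 2L$. Your extra remarks, that the acceptance test reuses the gradient already computed and that both loops terminate, are sound but not needed for the count itself.
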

\begin{proof}
By the description in Algorithm~\ref{alg:universal_linesearch}, the number of evaluations of $\nabla g$ is $\sum\nolimits_{k=1}^N S_k 
=  
\sum\nolimits_{k=1}^N (1+\log_2 L_k/L_{k-1} )
\le  N + \log_2 (2L/L_0)$.
\end{proof}
\begin{lemma}\label{thm:additional_feval_dueto_M}
    To complete $N$  outer iterations, Algorithm~\ref{alg:universal_linesearch} with Subroutine~\ref{alg:universal_subroutine_linesearch} requires at most 
        \(
\sum\nolimits_{k=1}^N\sum\nolimits_{s=1}^{S_k}T_k^s
      +
     \tfrac{2(1+\nu)}{3\nu+1} 
     \sum\nolimits_{k=1}^N S_k\log_2 (\tfrac{2\overline{M}_{\nu,\varepsilon}k^{\tfrac{1-\nu}{1+\nu}}}{L_0} )
    \)
    evaluations of $f'$.
\end{lemma}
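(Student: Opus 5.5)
We count each evaluation of $f'$ performed inside Subroutine~\ref{alg:universal_subroutine_linesearch}. Every call $(k,s)$ runs $T_k^s$ inner iterations. In inner iteration $t$ the linesearch \eqref{eq:universal_f_ineq_s} evaluates $f'(\xlb_k^{t,s})$ once for each trial value of $M_k^{t,s}$, which gives $1+i_k^{t,s}$ evaluations. (If the trial point $\xlb_k^{t,s}$ changes with the trial $M$, each trial costs one evaluation; in either case we bound by $1+i_k^{t,s}$.) The total is therefore
\[
\sum\nolimits_{k=1}^N\sum\nolimits_{s=1}^{S_k}T_k^s+\sum\nolimits_{k=1}^N\sum\nolimits_{s=1}^{S_k}\sum\nolimits_{t=1}^{T_k^s} i_k^{t,s},
\]
and it remains to show that, for each fixed $(k,s)$, $\sum_t i_k^{t,s}\le \tfrac{2(1+\nu)}{1+3\nu}\log_2(2\overline{M}_{\nu,\varepsilon}k^{\frac{1-\nu}{1+\nu}}/L_0)$.

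Fix $(k,s)$. By definition $M_k^{t,s}=2^{i_k^{t,s}}c_k^{t-1,s}M_k^{t-1,s}$ with $c_k^{t-1,s}\ge1$, and Lemma~\ref{thm:recursion_property}\ref{item:c_def} gives $c\ge 1$. Telescoping,
\[
\sum\nolimits_t i_k^{t,s}\le \log_2\bigl(M_k^{\max,s}/M_k^{0,s}\bigr),
\]
where $M_k^{\max,s}$ is the largest accepted estimate in the call. We drop the $c$ factors because they only make the ratio larger. Two bounds feed into this. The lower bound is $M_k^{0,s}=\max\{M_0,L_k^s\}\ge L_0$. The upper bound comes from Proposition~\ref{thm:universal_estimates_bound}, which gives $M_k^{t,s}\le 2\overline{M}_{\nu,\varepsilon}(\gamma_k^s\alpha_k^{t,s})^{-\frac{1-\nu}{1+\nu}}$.

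The main obstacle is converting $(\gamma_k^s\alpha_k^{t,s})^{-\frac{1-\nu}{1+\nu}}$ into $k^{\frac{1-\nu}{1+\nu}}$ and producing the constant $\tfrac{2(1+\nu)}{1+3\nu}$. Two further difficulties sit inside this step. First, the terminal iteration $t=T_k^s$ is not covered by Proposition~\ref{thm:universal_estimates_bound}. Second, the $c$-corrections inflate $M$ beyond what the linesearch alone produces. My plan is to avoid bounding $M$ directly and work with $A_k^{t,s}=M_k^{t,s}(\alpha_k^{t,s})^2$ instead.

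\begin{itemize}
\item At termination, Proposition~\ref{thm:Ak1s_ge_Lks} gives $A_k^{T_k^s,s}=L_k^s$, and $L_k^s\ge L_0$.
\item Each doubling in the $M$-linesearch changes $\alpha$ through the recursion \eqref{eq:alpha_recur}. By Lemma~\ref{thm:recursion_property}\ref{item:lambda_compare_L_increase}, multiplying $M$ by $q$ multiplies the resulting $A$ by at least $q\cdot(1/\sqrt q)^2$ in the worst case, so track $\alpha$ carefully. I would relate the total doubling factor $2^{\sum_t i_k^{t,s}}$ to the ratio between the final $M$-trajectory and a reference trajectory with no doublings.
\item To bound $\gamma_k^s$ from below, use Lemma~\ref{thm:recursion_property}\ref{item:lambda_bound} and \ref{item:Gamma_k_ge_L1_over_ksq}. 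This gives $\Gamma_k^s\ge L_1/k^2$, and hence $\gamma_k^s=\sqrt{\Gamma_k^s/L_k^s}\gtrsim 1/k$ up to a ratio of $L$-estimates, which itself is bounded by $2L/L_0$.
\item Combining these should give $M_k^{\max,s}(\alpha)^{\frac{2(1-\nu)}{1+3\nu}}$-type bounds. Solving the balance between $M\le 2\overline{M}_{\nu,\varepsilon}(\gamma\alpha)^{-\frac{1-\nu}{1+\nu}}$ and $A=M\alpha^2\ge L_k^s\ge L_0$ for $\alpha$ gives
\[
M\le\bigl(2\overline{M}_{\nu,\varepsilon}\gamma^{-\frac{1-\nu}{1+\nu}}\bigr)^{\frac{2(1+\nu)}{1+3\nu}}L_0^{-\frac{1-\nu}{1+3\nu}},
\]
and therefore $\log_2(M/L_0)\le\tfrac{2(1+\nu)}{1+3\nu}\log_2\bigl(2\overline{M}_{\nu,\varepsilon}\gamma^{-\frac{1-\nu}{1+\nu}}/L_0\bigr)$. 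This exponent is exactly the constant in the statement.
\end{itemize}

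Finally, substituting $\gamma_k^s\ge 1/k$ (after checking that the $L$-ratio factor can be absorbed, or else that the worst case is governed by $L_k^s\ge L_k^1$), summing over $s$ and $k$, and adding the $\sum T_k^s$ term gives the claimed bound.
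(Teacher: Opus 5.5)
\textbf{There is a genuine gap, in two places.}

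\emph{First gap: the terminal estimate.} Your telescoping $\sum_t i_k^{t,s}\le\log_2(M_k^{\max,s}/M_k^{0,s})$ runs all the way to the terminal estimate. The $M_k^{t,s}$ are nondecreasing, so $M_k^{\max,s}=M_k^{T_k^s,s}$, and this quantity cannot be bounded. For $T_k^s\ge2$ it equals $c_k^{T_k^s-1,s}M_k^{T_k^s-1,s}=L_k^s(A_k^{T_k^s-1,s})^2/(A_k^{T_k^s-1,s}-L_k^s)^2$, which blows up as $A_k^{T_k^s-1,s}\downarrow L_k^s$. Your last bullet then applies the bound of Proposition~\ref{thm:universal_estimates_bound} to this $M$, which is exactly where that bound is not available.

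Switching to $A$ does not rescue it. The identity $A_k^{T_k^s,s}=L_k^s$ still allows $\alpha_k^{T_k^s,s}=1-L_k^s/A_k^{T_k^s-1,s}$ to be arbitrarily small.

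The missing observation is that $i_k^{T_k^s,s}=0$:
\begin{itemize}
\item Termination requires $A_k^{t,s}=L_k^s$ exactly.
\item The first trial at the terminal step gives $A=L_k^s$ by Lemma~\ref{thm:recursion_property}\ref{item:c_def}. If no correction was set, it instead exceeds $L_k^s$.
\item Every doubling strictly increases $A$, by Lemma~\ref{thm:recursion_property}\ref{item:lambda_Lambda_E}.
\end{itemize}
So the doubling count telescopes only up to $t=T_k^s-1$:
\[
\sum\nolimits_{t=1}^{T_k^s}(i_k^{t,s}+1)\le T_k^s+\log_2\bigl(M_k^{T_k^s-1,s}/M_k^{0,s}\bigr),
\]
and $t=T_k^s-1$ is covered by Proposition~\ref{thm:universal_estimates_bound}.

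\emph{Second gap: the lower bound on $A$.} In your balance computation you weaken $A\ge L_k^s$ to $A\ge L_0$. This decouples $\gamma_k^s$ from $L_k^s$ and leaves you needing $\gamma_k^s\ge 1/k$, which is false in general. Lemma~\ref{thm:recursion_property}\ref{item:Gamma_k_ge_L1_over_ksq} only gives $L_k^s(\gamma_k^s)^2=\Gamma_k^s\ge L_1/k^2$. If $L$ is doubled many times at $k=2$, then $\gamma_2^s\approx\sqrt{L_1/L_2^s}\ll 1/2$. The leftover factor $(L_k^s/L_1)^{(1-\nu)/(2+2\nu)}$ inside the logarithm is not part of the claimed bound.

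The fix is to keep $A_k^{T_k^s-1,s}\ge L_k^s$ from Proposition~\ref{thm:Ak1s_ge_Lks}. Combine it with $M_k^{T_k^s-1,s}\le 2\overline{M}_{\nu,\varepsilon}(\gamma_k^s\alpha_k^{T_k^s-1,s})^{-\frac{1-\nu}{1+\nu}}$ and eliminate $\alpha$:
\[
M_k^{T_k^s-1,s}\le(2\overline{M}_{\nu,\varepsilon})^{\frac{2(1+\nu)}{1+3\nu}}\bigl(L_k^s(\gamma_k^s)^2\bigr)^{-\frac{1-\nu}{1+3\nu}}=(2\overline{M}_{\nu,\varepsilon})^{\frac{2(1+\nu)}{1+3\nu}}(\Gamma_k^s)^{-\frac{1-\nu}{1+3\nu}}.
\]
Now $L_k^s$ and $\gamma_k^s$ enter only through $\Gamma_k^s$. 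Since the sequence $L_1,\dots,L_{k-1},L_k^s$ is nondecreasing, $\Gamma_k^s\ge L_1/k^2\ge L_0/k^2$; for $k=1$, $\Gamma_1^s=L_1^s\ge L_0$. Together with $M_k^{0,s}\ge L_0$, this yields exactly $\tfrac{2(1+\nu)}{1+3\nu}\log_2\bigl(2\overline{M}_{\nu,\varepsilon}k^{\frac{1-\nu}{1+\nu}}/L_0\bigr)$ per call $(k,s)$.
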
 

With the help of Proposition~\ref{thm:universal_GammaA_bound} and Lemmas~\ref{thm:additional_geval_dueto_L} and~\ref{thm:additional_feval_dueto_M}, we now establish the numbers of evaluations of \(\nabla g\) and \(f'\) required by the UGS algorithm to obtain an \(\varepsilon\)-solution in Theorem~\ref{thm:universal_overall_complexity_for_naive}. We then discuss the resulting gradient complexity.
\begin{theorem}\label{thm:universal_overall_complexity_for_naive}
To compute an
\(\varepsilon\)-solution, 
Algorithm~\ref{alg:universal_linesearch} with
Subroutine~\ref{alg:universal_subroutine_linesearch} requires at most
\(
(4\sqrt{2}+2)\sqrt{LV^*/\varepsilon}
\)
evaluations of $\nabla g$, 
and 
\begin{equation*}
C(\tfrac{\overline{M}_{\nu,\varepsilon}V^*}{\varepsilon})^{\tfrac{1+\nu}{1+3\nu}}
+
(4\sqrt{2}+2)(\tfrac{LV^*}{\varepsilon})^{\tfrac{1}{2}}
[\tfrac{7-3\nu}{3\nu+1} + 
\tfrac{2+2\nu}{3\nu+1}
\log_2 (\tfrac{\overline{M}_{\nu,\varepsilon}V^*}{\varepsilon})+\tfrac{1-\nu}{3\nu+1}\log_2(\tfrac{LV^*}{\varepsilon})]
\end{equation*}
evaluations of $f'$, where $C = (4+2\sqrt{2})
[1+4(1+\sqrt{5})^{\tfrac{1-\nu}{1+3\nu}}]2^{\tfrac{1+\nu}{1+3\nu}}$ and $V^*:=V(x_0,x^*)$. 
\end{theorem}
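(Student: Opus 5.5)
Let $N_\varepsilon$ be the first outer index with $2\Gamma_{N_\varepsilon}^{S_{N_\varepsilon}}V^*\le\varepsilon/2$, i.e. $\Gamma_{N_\varepsilon}^{S_{N_\varepsilon}}\le \varepsilon/(4V^*)$. By Proposition~\ref{thm:Ak1s_ge_Lks} the terminal identity $A_k^{T_k^{S_k},S_k}=L_k^{S_k}$ always holds. Hence \eqref{eq:universal_final_bound} in Theorem~\ref{thm:universal_parameters_and_convergence} with $x=x^*$ shows that $\xub_{N_\varepsilon}$ is an $\varepsilon$-solution. The plan is to bound $N_\varepsilon$ from above using the first bound of Proposition~\ref{thm:universal_GammaA_bound}. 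We then bound the inner work of iterations $k\ge2$ using \eqref{eq:Gamma_bound} applied at $N=N_\varepsilon$, where minimality gives $\Gamma_{N_\varepsilon-1}^{S_{N_\varepsilon-1}}>\varepsilon/(4V^*)$. The work of iteration $k=1$ is bounded via \eqref{eq:universal_sumT_bound_k1} together with $L_0\ge\varepsilon/V^*$. The linesearch overheads are handled by Lemmas~\ref{thm:additional_geval_dueto_L} and \ref{thm:additional_feval_dueto_M}.

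\textbf{Gradient evaluations of $g$.} From $\Gamma_N^{S_N}\le 8L/(N+1)^2$, we get $N_\varepsilon\le \lceil \sqrt{32LV^*/\varepsilon}\,\rceil-1\le 4\sqrt2\sqrt{LV^*/\varepsilon}$. Lemma~\ref{thm:additional_geval_dueto_L} adds $\log_2(2L/L_0)\le \log_2(2LV^*/\varepsilon)$. I would bound this logarithm crudely by $2\sqrt{LV^*/\varepsilon}$, using $\log_2(2y)\le 2\sqrt y$ for $y\ge1$; note $y=LV^*/\varepsilon\ge L_0V^*/\varepsilon\ge1$. This gives the claimed $(4\sqrt2+2)\sqrt{LV^*/\varepsilon}$. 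The same estimate gives $\sum_{k\le N_\varepsilon}S_k\le(4\sqrt2+2)\sqrt{LV^*/\varepsilon}$, which is needed below.

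\textbf{Subgradient evaluations of $f$.} For $N_\varepsilon\ge2$, we invert \eqref{eq:Gamma_bound} with $\Gamma_{N_\varepsilon-1}^{S_{N_\varepsilon-1}}>\varepsilon/(4V^*)$. This gives
\[
\sum\nolimits_{k=2}^{N_\varepsilon}\sum\nolimits_{s}T_k^s\le (8\overline{M}_{\nu,\varepsilon}V^*/\varepsilon)^{\frac{1+\nu}{1+3\nu}}(8+4\sqrt2)\bigl(\tfrac{1+\sqrt5}{2}\bigr)^{\frac{1-\nu}{1+3\nu}}.
\]
The first-iteration term is at most $(4+2\sqrt2)(2\overline{M}_{\nu,\varepsilon}V^*/\varepsilon)^{\frac{1+\nu}{1+3\nu}}$. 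Collecting constants should reproduce $C=(4+2\sqrt2)[1+4(1+\sqrt5)^{\frac{1-\nu}{1+3\nu}}]2^{\frac{1+\nu}{1+3\nu}}$, since $8^{\frac{1+\nu}{1+3\nu}}\cdot 2\cdot 2^{-\frac{1-\nu}{1+3\nu}}\cdot(1+\sqrt5)^{\frac{1-\nu}{1+3\nu}}=4\cdot 2^{\frac{1+\nu}{1+3\nu}}(1+\sqrt5)^{\frac{1-\nu}{1+3\nu}}$. This exponent check is routine and I would verify it explicitly.

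\textbf{Logarithmic overhead (main obstacle).} This is the second term of Lemma~\ref{thm:additional_feval_dueto_M}. Bound $k\le N_\varepsilon\le 4\sqrt2\sqrt{LV^*/\varepsilon}$ and $1/L_0\le V^*/\varepsilon$. Then
\[
\log_2\bigl(2\overline{M}_{\nu,\varepsilon}k^{\frac{1-\nu}{1+\nu}}/L_0\bigr)\le 1+\log_2(\overline{M}_{\nu,\varepsilon}V^*/\varepsilon)+\tfrac{1-\nu}{1+\nu}\bigl[\tfrac52+\tfrac12\log_2(LV^*/\varepsilon)\bigr].
\]
Multiplying by $\frac{2(1+\nu)}{1+3\nu}$ and by $\sum_kS_k\le(4\sqrt2+2)\sqrt{LV^*/\varepsilon}$ should give the bracket. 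Here the constant is $\frac{2(1+\nu)+5(1-\nu)}{1+3\nu}=\frac{7-3\nu}{1+3\nu}$, and the remaining coefficients are $\frac{2+2\nu}{1+3\nu}$ and $\frac{1-\nu}{1+3\nu}$. The hard part is getting these exact constants. In particular, one must check that $\log_2(4\sqrt2)=5/2$ enters with the right weight, and that the $N_\varepsilon=1$ case is covered by the $k=1$ bound alone.
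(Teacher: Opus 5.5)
Your proposal is correct and follows essentially the same route as the paper's proof. You use the same threshold $\Gamma_{N_\varepsilon}^{S_{N_\varepsilon}}\le\varepsilon/(4V^*)$, the bound $\Gamma_N^{S_N}\le 8L/(N+1)^2$, \eqref{eq:universal_sumT_bound_k1} and the inverted \eqref{eq:Gamma_bound} together with $L_0\ge\varepsilon/V^*$, and the two linesearch-overhead lemmas. Your constant bookkeeping also checks out: $\log_2(2y)\le 2\sqrt{y}$, the identity $8^{\frac{1+\nu}{1+3\nu}}\,2^{1-\frac{1-\nu}{1+3\nu}}=4\cdot 2^{\frac{1+\nu}{1+3\nu}}$, and $\log_2(4\sqrt2)=5/2$ yielding $\frac{7-3\nu}{3\nu+1}$ are all valid.
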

\begin{proof}
It follows from Lemma~\ref{thm:recursion_property}\ref{item:lambda_in_01_Lambda_decrease}
and Proposition~\ref{thm:universal_GammaA_bound} that
\(\Gamma_k^{S_k}\) decreases strictly to \(0\) as \(k\to\infty\).
Let $N_\varepsilon\ge1$ be the first outer iteration index such that $\Gamma_{N_\varepsilon}^{S_{N_\varepsilon}}\le \varepsilon/(4V^*)$.
Theorem~\ref{thm:universal_parameters_and_convergence} with  
Proposition~\ref{thm:Ak1s_ge_Lks} indicates 
that $\xub_{N_\varepsilon}$ is an $\varepsilon$-solution and by 
$\Gamma_{N_\varepsilon}^{S_{N_\varepsilon}}\le 8L/(N_\varepsilon+1)^2$ in 
Proposition~\ref{thm:universal_GammaA_bound} we know $N_\varepsilon\le \lceil \sqrt{32LV^*/\varepsilon}-1\rceil$. Hence by $L_0\ge \varepsilon/V^*$ and Lemma~\ref{thm:additional_geval_dueto_L}, the required number of evaluations of $\nabla g$ is at most $\sum_{k=1}^{N_\varepsilon}S_k\le \sqrt{32LV^*/\varepsilon}+\log_2 (2LV^*/\varepsilon)
\le 
(4\sqrt{2}+2)\sqrt{LV^*/\varepsilon}$.
Applying  $L_0\ge \varepsilon/V^*$ to $\eqref{eq:universal_sumT_bound_k1}$
yields $\sum_{s=1}^{S_1}T_1^s
\le 
(4+2\sqrt{2}) (2\overline{M}_{\nu,\varepsilon}V^*/\varepsilon)^{\tfrac{1+\nu}{1+3\nu}}$. 
If $N_\varepsilon\ge2$, then  
$\Gamma_{N_\varepsilon-1}^{S_{N_\varepsilon-1}}>\varepsilon/(4V^*)$ and hence 
\(
\sum\nolimits_{k=2}^{N_\varepsilon}\sum\nolimits_{s=1}^{S_k}T_k^s 
\le
(2\overline{M}_{\nu,\varepsilon}V^*/\varepsilon)^{\tfrac{1+\nu}{1+3\nu}}(16+8\sqrt{2})(1+\sqrt{5})^{\tfrac{1-\nu}{1+3\nu}}
\) 
by \eqref{eq:Gamma_bound}. 
Moreover, by the above upper bound on $N_\varepsilon$ and $L_0\ge \varepsilon/V^*$, we have that 
\(
   \log_2 ((\tfrac{2\overline{M}_{\nu,\varepsilon}}{L_0})k^{\tfrac{1-\nu}{1+\nu}})
\le
\log_2
((\tfrac{2\overline{M}_{\nu,\varepsilon}V^*}{\varepsilon})N_\varepsilon^{\tfrac{1-\nu}{1+\nu}})
\le
\tfrac{7-3\nu}{2+2\nu} + 
\log_2 (\tfrac{\overline{M}_{\nu,\varepsilon}V^*}{\varepsilon})+\tfrac{1-\nu}{2+2\nu}\log_2(\tfrac{LV^*}{\varepsilon})
\), for $k\le N_\varepsilon$.
Applying the above relations to Lemma~\ref{thm:additional_feval_dueto_M} completes the proof.
\end{proof}

We conclude this subsection with a few comments on Theorem~\ref{thm:universal_overall_complexity_for_naive}. 
First, the total number of
evaluations of \(\nabla g\) is bounded by
\(
    \mathcal O(
    \sqrt{LV^*/\varepsilon})
\), matching the optimal complexity bound. Second, when $\nu\in [0,1)$, using the assumption that $L\le \overline{M}_{\nu,\varepsilon}$ and the fact that $Q^{1/2}\log_2 Q \le  \mathcal{O}(Q^{(1+\nu)/(1+3\nu)})$ for all positive $Q$, 
the total number of evaluations of \(f'\) is bounded by
\(
    \mathcal O(
    (
    \overline M_{\nu,\varepsilon}V^*/\varepsilon
    )^{{(1+\nu)}/{(1+3\nu)}}
    )
    =
    \mathcal O(
    M_\nu (V^*)^{{(1+\nu)}/{2}}
    /\varepsilon
    )^{{2}/{(1+3\nu)}}
)
\) .
Such a bound matches the optimal order for the \((M_\nu,\nu)\)-Hölder component.
Third, when \(\nu=1\), the total number of evaluations of $f'$ is bounded by $\mathcal O(
    \sqrt{M_{\nu=1}V^*/\varepsilon} + \sqrt{LV^*/\varepsilon}\log_2{(M_{\nu=1}V^*/\varepsilon)})$. 
There is an unfavorable extra logarithm term in the complexity, which is due to the lack of monotonicity in the sequence $\{M_{k}^{t,s}\}$ of estimated constants.
Under a slightly stronger assumption that 
\(
    L\le
    M_{\nu=1}/
    \log_2^2(M_{\nu=1}V^*/\varepsilon)
\) instead of $L\le M_{\nu=1}$ as in \eqref{eq:LMnv_assumption}, 
such $f'$ evaluation complexity becomes $\mathcal O(
    \sqrt{M_{\nu=1}V^*/\varepsilon})$ and matches the optimal complexity. Alternatively, if we already know that $\nu=1$ in advance, by modifying the algorithm and searching for $M_{\nu}$ directly, it is easy to show that the sequence of estimates of constant $M_{\nu}$ can be monotone increasing and the unfavorable logarithm can be removed in the analysis.
Finally, 
the remaining restriction in this subsection
is the initial condition
\(
    L_0\ge \varepsilon/V^*
\). 
We will remove this requirement in the next section.
\section{Parameter-free universal gradient sliding (PFUGS)}\label{sec:PFUGS}
To remove the remaining dependence of Algorithm~\ref{alg:universal_linesearch} on the unknown quantity $V^*$,
we introduce a modification that preserves the same complexity while
requiring no prior knowledge of \(\nu\), \(M_\nu\), \(L\), or \(V^*\).
We call the resulting method the parameter-free universal gradient sliding (PFUGS) algorithm, whose
main procedure is given in Algorithm~\ref{alg:universal_linesearch_decreas_increase}.

\begin{algorithm}[h]
\caption{Proposed PFUGS algorithm}
\label{alg:universal_linesearch_decreas_increase}
\begin{algorithmic}[1]
\State In Algorithm~\ref{alg:universal_linesearch}, modify the initialization and $k=1$ iteration as follows:
\State Initialization: $x_0=\xtil_0=\xub_0\in X$ and $M_0\in(0,L]$
\State Set $k=1$, $s=1$, $t=1$, $x_1^{0,1} = \xtil_1^{0,1}=x_0$, and $M_1^{0,1} = M_0$
\State Compute
\vspace{-10pt}
\begin{align}
\begin{aligned}
    &\xlb_k^s = (1-\gamma_k^s)\xub_{k-1}+\gamma_k^sx_{k-1}\nonumber
    \\&
    \xlb_k^{t,s} = (1-\gamma_k^s)\xub_{k-1} + \gamma_k^s[(1-\alpha_k^{t,s})\xtil_k^{t-1,s} +\alpha_k^{t,s}x_k^{t-1,s}]\nonumber
    \\&
    x_k^{t,s} = \operatorname{argmin}_{x\in X} \langle \nabla g(\xlb_k^s) + f'(\xlb_k^{t,s}), x\rangle + \eta_k^sV(x_{k-1},x)+p_k^{t,s}V(x_k^{t-1,s},x)
    \\&
    \xtil_k^{t,s} = (1-\alpha_k^{t,s})\xtil_k^{t-1,s} +\alpha_k^{t,s}x_k^{t,s}\nonumber
    \\&
    \xub_k^{t,s} = (1-\gamma_k^s)\xub_{k-1} + \gamma_k^s\xtil_k^{t,s}\nonumber
\end{aligned}
\end{align}
Here the parameters are set as
    $\alpha_1^{1,1}=1$,
    $\gamma_1^1=1$,
    $\eta_1^1 = L_1^1\gamma_1^1$,
    $p_1^{1,1} = L_1^1\gamma_1^1(1-\alpha_1^{1,1})/\alpha_1^{1,1} + \gamma_1^1M_1^{1,1}\alpha_1^{1,1}$,
    $L_1^1=M_1^{1,1}= 2^{i_1^{1,1}}M_1^{0,1}$, where
$i_1^{1,1}$ is the smallest nonnegative integer, such that both \eqref{eq:universal_f_ineq_s} and \eqref{eq:universal_g_ineq_s} hold
\State Set
$x_k^s = x_k^{1,s}$,
$\xtil_k^s = \xtil_k^{1,s}$,
$\xub_k^s = \xub_k^{1,s}$,
and update $s\gets s+1$
\State Set $L_k^s = L_k^1/2^{s-1}$ and $M_k^{0,s} =\max\{M_0,L_k^s\}$\label{state:universal_outeralg_k1sge2}
\State Call subroutine
$(x_k^s, \tilde x_k^s, \xub_k^s)
=
\mathcal{A}(\nabla g(\xlb_k^s), \eta_k^s, M_k^{0,s})$
where,
$\xlb_k^s = (1-\gamma_k^s)\xub_{k-1}+\gamma_k^sx_{k-1}$,
and parameters are set as in Algorithm~\ref{alg:universal_linesearch}
\State If condition \eqref{eq:universal_g_ineq_s} holds,  set $s\gets s+1$ and return to line \eqref{state:universal_outeralg_k1sge2};
otherwise, set
$S_k = s-1$,
$L_k=L_k^{S_k}$,
$x_k = x_k^{S_k}, \xtil_k = \xtil_k^{S_k}, \xub_k=\xub_k^{S_k}$, and $k=2$\label{state:pfugs_k1_ineq}
\end{algorithmic}
\end{algorithm}
 Algorithm~\ref{alg:universal_linesearch_decreas_increase} differs from Algorithm~\ref{alg:universal_linesearch} only in the
linesearch order in the first iteration. So all
convergence results established in Subsection~\ref{sec:universal_base} and \ref{sec:universal_convergence_properties}
remain valid. 
We also note that the initial condition $M_0\le L$ requires no knowledge of $L$; see the remark on satisfying $L_0\le L$ after the description of Algorithm \ref{alg:GDS_linesearch_naive}.
We now derive bounds of $\Gamma_N^{S_N}$ and $\Gamma_{N-1}^{S_{N-1}}$ in Proposition~\ref{thm:universal_GammaA_bound_final_version}; its proof is deferred to appendix.
\begin{proposition}
\label{thm:universal_GammaA_bound_final_version}
After running $N$ outer iterations in Algorithm~\ref{alg:universal_linesearch_decreas_increase}, we have the following bounds:  
$\Gamma_N^{S_N}\le 8L/(N+1)^2$, $\forall N\ge1$,
and if $N\ge2$, \\
\(
\Gamma_{N-1}^{S_{N-1}}
\le
(16+8\sqrt{2})^{\tfrac{1+3\nu}{1+\nu}}
2^{\tfrac{2\nu}{1+\nu}}
(1+\sqrt5)^{\tfrac{1-\nu}{1+\nu}}
\overline{M}_{\nu,\varepsilon}
/
\left(
\sum\nolimits_{s=1}^{S_1+1}T_1^s
+
\sum\nolimits_{k=2}^N\sum\nolimits_{s=1}^{S_k}T_k^s
\right)^{\tfrac{1+3\nu}{1+\nu}}
.
\)
\end{proposition}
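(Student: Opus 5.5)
The plan is to redo the argument behind Proposition~\ref{thm:universal_GammaA_bound}. The outer iterations $k\ge2$ of Algorithm~\ref{alg:universal_linesearch_decreas_increase} coincide with those of Algorithm~\ref{alg:universal_linesearch}, so only the treatment of $k=1$ needs to change.

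\emph{Step 1: the bound on $\Gamma_N^{S_N}$.} I would first check that $L_1\le 2L$ still holds under the new search order.
\begin{itemize}
\item If $S_1=1$, then $L_1=L_1^1$ comes from the increasing doubling search starting at $M_0\le L$. The condition \eqref{eq:universal_g_ineq_s} holds as soon as the trial exceeds $L$, so $L_1^1\le 2L$.
\item If $S_1\ge2$, the decreasing search stopped because \eqref{eq:universal_g_ineq_s} failed at $L_1^{S_1+1}=L_1/2$. A failure forces $L_1/2<L$.
\end{itemize}
For $k\ge2$, the bound $L_k\le2L$ is Proposition~\ref{thm:universal_estimates_bound}. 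I would then reuse the proof of $\Gamma_N^{S_N}\le 8L/(N+1)^2$ verbatim. It is the standard accelerated estimate: from $\Gamma_k=\Gamma_{k-1}(1-\gamma_k)$ and $\Gamma_k=L_k\gamma_k^2$ one gets $\Gamma_k^{-1/2}-\Gamma_{k-1}^{-1/2}\ge \gamma_k/(2\Gamma_k^{1/2})=1/(2\sqrt{L_k})\ge 1/(2\sqrt{2L})$. Together with $\Gamma_1=L_1\le2L$, summing gives the claim.

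\emph{Step 2: inner iterations for $k\ge2$.} Fix $(k,s)$ and evaluate \eqref{eq:universal_A_bound} of Proposition~\ref{thm:Ak1s_ge_Lks} at $t=T_k^s-1$, using $A\ge L_k^s$. This gives
\[
T_k^s\lesssim \bigl(2\overline{M}_{\nu,\varepsilon}(\gamma_k^s)^{-\frac{1-\nu}{1+\nu}}/L_k^s\bigr)^{\frac{1+\nu}{1+3\nu}}.
\]
Since $L_k^s=2^{s-1}L_{k-1}$, I would compare each trial $\gamma_k^s$ with the accepted $\gamma_k$ via Lemma~\ref{thm:recursion_property}\ref{item:lambda_compare_L_increase}. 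The sum over $s$ is then geometric and is dominated by a constant times the $s=S_k$ term. Next I would rewrite $L_k(\gamma_k)^{(1-\nu)/(1+\nu)}$ in terms of $\Gamma_k=L_k\gamma_k^2$ and $\gamma_k$. The goal is a per-iteration bound of the form $\gamma_k\,\Gamma_k^{-\frac{1+\nu}{1+3\nu}}\cdot(\text{const}\cdot\overline{M}_{\nu,\varepsilon})^{\frac{1+\nu}{1+3\nu}}$. The increment inequality from Step 1 then lets me telescope $\sum_k\gamma_k\Gamma_k^{-\frac{1+\nu}{1+3\nu}}$ up to a multiple of $\Gamma_{N-1}^{-\frac{1+\nu}{1+3\nu}}$. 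Where an index shift between $\gamma_k$ and $\gamma_{k-1}$ is needed, Lemma~\ref{thm:recursion_property}\ref{item:lambda_same_E_ratio} supplies it, and this is where the factor $(1+\sqrt5)^{\frac{1-\nu}{1+\nu}}$ comes from. I would copy these steps from the appendix proof of Proposition~\ref{thm:universal_GammaA_bound}.

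\emph{Step 3: the first outer iteration, which I expect to be the main obstacle.} Here $\gamma_1^s=1$ and $T_1^1=1$. The trials for $s\ge2$ are $L_1^s=L_1^1/2^{s-1}$, running down to $L_1^{S_1+1}=L_1/2$, and each has
\[
T_1^s\lesssim (2\overline{M}_{\nu,\varepsilon}/L_1^s)^{\frac{1+\nu}{1+3\nu}}.
\]
Because the $L_1^s$ decrease geometrically, $\sum_{s=1}^{S_1+1}T_1^s$ is bounded by a constant times its last term, namely $(4\overline{M}_{\nu,\varepsilon}/L_1)^{\frac{1+\nu}{1+3\nu}}$. The halving of $L_1$ in the last, rejected trial should produce the extra factor $2^{\frac{2\nu}{1+\nu}}$ and the doubled constant $16+8\sqrt2$, once everything is raised to the power $\frac{1+3\nu}{1+\nu}$. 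Finally, $\Gamma_1=L_1\ge\Gamma_{N-1}$ because $\Gamma$ is decreasing (Lemma~\ref{thm:recursion_property}\ref{item:lambda_in_01_Lambda_decrease}), so this term is also at most a constant times $(\overline{M}_{\nu,\varepsilon}/\Gamma_{N-1})^{\frac{1+\nu}{1+3\nu}}$.

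Adding the bounds from Steps 2 and 3 and inverting the power $\frac{1+\nu}{1+3\nu}$ yields the stated inequality. The delicate point is tracking the constants so that the $k=1$ contribution and the $k\ge2$ sum merge into the single stated coefficient.
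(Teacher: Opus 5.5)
\paragraph{The gap: which trial dominates in Step~2.} Your Step~2 rests on a claim that is false. For $k\ge2$ the trial values go \emph{up}, $L_k^{s+1}=2L_k^s$. With $p:=\tfrac{1+\nu}{1+3\nu}$, the per-trial bound is
$T_k^s\le 2(2\overline{M}_{\nu,\varepsilon})^{p}\gamma_k^s/(\Gamma_k^s)^{p}=2(2\overline{M}_{\nu,\varepsilon})^{p}(\gamma_k^s)^{\frac{\nu-1}{1+3\nu}}/(L_k^s)^{p}$,
and it \emph{decreases} geometrically in $s$. Lemma~\ref{thm:recursion_property}\ref{item:lambda_compare_L_increase} applied to consecutive trials gives $\gamma_k^s\le\sqrt2\,\gamma_k^{s+1}$, so each term is at most $1/\sqrt2$ times the previous one. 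The sum over $s$ is therefore controlled by the \emph{first} trial. The first term's bound is at least $2^{(S_k-1)/2}$ times that of the accepted trial $s=S_k$, and $S_k$ is unbounded, so the sum is not dominated by a constant times the $s=S_k$ term.

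Comparing $\gamma_k^s$ with $\gamma_k^{S_k}$ via part~\ref{item:lambda_compare_L_increase} cannot repair this. That part gives an \emph{upper} bound on $\gamma_k^s$, whereas the nonpositive exponent $\tfrac{\nu-1}{1+3\nu}$ needs a lower bound. The only lower bound available, $\gamma_k^s\ge\gamma_k^{S_k}$, leaves the factor $(L_k^{S_k}/L_k^s)^{p}=2^{(S_k-s)p}$ uncontrolled.

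\paragraph{The missing idea.} Anchor at $s=1$:
$\sum_{s=1}^{S_k}T_k^s\le\tfrac{\sqrt2}{\sqrt2-1}\,2(2\overline{M}_{\nu,\varepsilon})^{p}\gamma_k^1/(\Gamma_k^1)^{p}$.
Because $L_k^1=L_{k-1}$, Lemma~\ref{thm:recursion_property}\ref{item:lambda_same_E_ratio} applies to exactly this trial. It gives $\gamma_k^1\ge\tfrac{2}{1+\sqrt5}\gamma_{k-1}^{S_{k-1}}$, so the cost of iteration $k$ is at most a multiple of $\gamma_{k-1}^{S_{k-1}}/(\Gamma_{k-1}^{S_{k-1}})^{p}$. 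This index shift is what makes the telescoping stop at $\Gamma_{N-1}^{S_{N-1}}$ rather than $\Gamma_N^{S_N}$; the telescoping uses $(\Gamma_k^{S_k})^{-p}-(\Gamma_{k-1}^{S_{k-1}})^{-p}\ge\tfrac12\gamma_k^{S_k}(\Gamma_k^{S_k})^{-p}$ with $1/\Gamma_0^{S_0}:=0$. If you anchor at $\gamma_k^{S_k}$, there is no legitimate source for that shift.

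\paragraph{Other points.} Your Step~3 is right: at $k=1$ the trials go down, so the last trial $s=S_1+1$ does dominate. Your additive merge using $\Gamma_1^{S_1}\ge\Gamma_{N-1}^{S_{N-1}}$ is a harmless repackaging of the paper's step $2/(\Gamma_{N-1}^{S_{N-1}})^{p}\ge 1/(\Gamma_1^{S_1})^{p}+\sum_{k=1}^{N-1}[\cdots]$. With the corrected Step~2, the $k\ge2$ cost accounts for exactly half of the stated coefficient raised to the power $p$, and the $k=1$ cost fits under the other half. So $2^{\frac{2\nu}{1+\nu}}$ and $16+8\sqrt2$ come from the $k\ge2$ analysis, not from the halving of $L_1$ as you guessed.

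To obtain these constants, apply \eqref{eq:universal_A_bound} at $t=T_k^s$ together with the identity $A_k^{T_k^s,s}=L_k^s$. At $t=T_k^s-1$ you only get $T_k^s\le 1+\cdots$, which loses a constant.

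Your $\Gamma_k^{-1/2}$ telescoping for $\Gamma_N^{S_N}\le 8L/(N+1)^2$ is a valid alternative to the paper's route via $L_N\le 2L$ and $\gamma_N^{S_N}\le 2/(N+1)$. However, the first bullet of Step~1 is mis-argued. At $k=s=1$ the doubling must satisfy both \eqref{eq:universal_f_ineq_s} and \eqref{eq:universal_g_ineq_s}, so $L_1^1=M_1^{1,1}$ can be as large as roughly $2\overline{M}_{\nu,\varepsilon}$, which may far exceed $2L$. The bound $L_1<2L$ always comes from the rejected trial $L_1^{S_1+1}=L_1/2$, and this argument covers $S_1=1$ as well.
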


We next estimate the number of evaluations of $\nabla g$ and $f'$ required by Algorithm~\ref{alg:universal_linesearch_decreas_increase} to complete $N$ outer iterations. The corresponding bounds are established in Lemma~\ref{thm:additional_geval_dueto_L_final_version} and \ref{thm:additional_feval_dueto_M_alg_final_version}, respectively; the proof of Lemma \ref{thm:additional_feval_dueto_M_alg_final_version} is deferred to appendix.
\begin{lemma}\label{thm:additional_geval_dueto_L_final_version}
Algorithm~\ref{alg:universal_linesearch_decreas_increase} requires only one evaluation of $\nabla g$ to complete the first iteration. For $N\ge2$ it requires at most  
$N+\log_2 (2L/L_1 )$ evaluations of $\nabla g$.
\end{lemma}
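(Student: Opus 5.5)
The count splits into the first outer iteration and the iterations $k\ge2$, with evaluations tallied through the points $\xlb_k^s$ at which $\nabla g$ is queried.

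\textbf{First iteration.} Since $\gamma_1^s=1$ for every $s$, we have $\xlb_1^s=(1-\gamma_1^s)\xub_0+\gamma_1^s x_0=x_0$ for all trials $s$ in the first iteration of Algorithm~\ref{alg:universal_linesearch_decreas_increase}. This includes both the increasing search over $i_1^{1,1}$ and the subsequent decreasing search over $s\ge2$. Every subproblem and every test of \eqref{eq:universal_g_ineq_s} therefore uses the single vector $\nabla g(x_0)$. The test \eqref{eq:universal_g_ineq_s} needs only function values of $g$, not gradients. Hence computing $\nabla g(x_0)$ once and reusing it completes the first iteration with one evaluation of $\nabla g$.

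\textbf{Iterations $k\ge2$.} For $k\ge2$ the iteration is identical to Algorithm~\ref{alg:universal_linesearch}. In outer iteration $k$, each trial $s=1,\dots,S_k$ requires one new evaluation $\nabla g(\xlb_k^s)$, because $\gamma_k^s$ and hence $\xlb_k^s$ change with $L_k^s=2^{s-1}L_{k-1}$. Thus $S_k=1+\log_2(L_k/L_{k-1})$, and the total is
\[
1+\sum\nolimits_{k=2}^N S_k = 1+(N-1)+\log_2(L_N/L_1)=N+\log_2(L_N/L_1).
\]
It remains to show $L_N\le 2L$, which gives the claimed bound $N+\log_2(2L/L_1)$.

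\textbf{Bounding $L_k$.} For $k\ge2$, the doubling search stops once $L_k^s\ge L$, because \eqref{eq:universal_g_ineq_s} then holds by \eqref{eq:L}. This gives $L_k\le\max\{L_{k-1},2L\}$, and by induction it reduces to showing $L_1\le 2L$.

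\textbf{Main step: $L_1\le 2L$.} In the first iteration, $L_1$ is chosen by a decreasing search. Starting from $L_1^1$, the algorithm halves $L_1^s$ while \eqref{eq:universal_g_ineq_s} holds and stops at the first violation, setting $L_1=L_1^{S_1}$.

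Suppose first that $S_1\ge2$ in the sense that the trial $L_1^{S_1+1}=L_1/2$ was tested and violated \eqref{eq:universal_g_ineq_s}. Any value $\ge L$ satisfies \eqref{eq:universal_g_ineq_s}, so $L_1/2<L$, i.e. $L_1<2L$.

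Otherwise the first trial value $L_1^2=L_1^1/2$ already fails, and $L_1=L_1^1=M_1^{1,1}=2^{i_1^{1,1}}M_0$. Because $i_1^{1,1}$ is the smallest index for which both \eqref{eq:universal_f_ineq_s} and \eqref{eq:universal_g_ineq_s} hold, either $i_1^{1,1}=0$, in which case $L_1=M_0\le L$, or the failure at the previous index must be attributed to one of the two inequalities. If \eqref{eq:universal_g_ineq_s} failed at $L_1^1/2$, then $L_1^1/2<L$. The remaining case is a failure caused only by \eqref{eq:universal_f_ineq_s}. This is the main obstacle: $M_1^{1,1}$ is tied to $L_1^1$, so the $f$-search could in principle push $L_1^1$ above $2L$. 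Here the decreasing search resolves it. The trial $L_1^2=L_1^1/2$ is tested against \eqref{eq:universal_g_ineq_s} alone; if $L_1^1/2\ge L$ that test passes by \eqref{eq:L}, and the search continues to halve. The search therefore terminates with $L_1<2L$ in all cases, which completes the induction.
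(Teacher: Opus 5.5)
Your proof is correct and matches the paper's. The paper's argument has the same structure: $\nabla g(x_0)$ is reused throughout the first iteration because $\gamma_1^s=1$, and for $k\ge2$ the count telescopes as $1+\sum_{k=2}^N(1+\log_2(L_k/L_{k-1}))\le N+\log_2(2L/L_1)$ using $L_N\le 2L$. The paper simply cites $L_k\le 2L$, whereas you verify $L_1<2L$ for the decreasing first-iteration search. Your case split there is unnecessary: the search starts at $s=2$, so the failing trial $L_1^{S_1+1}=L_1/2$ is always tested, and hence $L_1/2<L$ in every case.
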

\begin{proof}
    Notice that $\gamma_1^s=1$, so the $\nabla g$ evaluation points at $k=1$ satisfy $\xlb_1^s=x_0$, $\forall s\ge1$, that is the same value $\nabla g(x_0)$ is reused for all trials $L_1^s$. Hence, Algorithm~\ref{alg:universal_linesearch_decreas_increase} needs only one evaluation of $\nabla g$ at the first iteration. To complete $N\ge2$ iterations, by $L_k\le2L$, it requires at most 
    $
     1
    +
     \sum\nolimits_{k=2}^{N} (1+
    \log_2(L_k/L_{k-1}))
    \le
    N+\log_2 (2L/L_1 ).
    $
\end{proof}
\begin{lemma}\label{thm:additional_feval_dueto_M_alg_final_version}
    For $N\ge2$,
    the total number of evaluations of $f'$ required by 
Algorithm~\ref{alg:universal_linesearch_decreas_increase} is at most
$
 ( \sum\nolimits_{s=1}^{S_1+1}T_1^{s}       + \sum\nolimits_{k=2}^N \sum\nolimits_{s=1}^{S_k}T_k^s )
+
 \sum\nolimits_{k=1}^N S_k 
    \tfrac{2(1+\nu)}{3\nu+1}
    \log_2 ((\tfrac{4 \overline{M}_{\nu,\varepsilon}}{L_1})k^{\tfrac{1-\nu}{1+\nu}} ).
$
\end{lemma}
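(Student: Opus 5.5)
We prove Lemma~\ref{thm:additional_feval_dueto_M_alg_final_version} by splitting the total count of $f'$ evaluations into two parts. The first part is one evaluation per inner iteration $t$, namely the evaluation of $f'(\xlb_k^{t,s})$ at the first trial of $M_k^{t,s}$. The second part is the extra evaluations caused by failed backtracking trials of $M_k^{t,s}$, i.e., by the exponents $i_k^{t,s}$. This follows the analogous Lemma~\ref{thm:additional_feval_dueto_M} for Algorithm~\ref{alg:universal_linesearch}; only the first outer iteration needs new accounting.

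\emph{Counting inner iterations.} For $k\ge2$ the subroutine is called exactly $S_k$ times, and call $s$ runs $T_k^s$ inner iterations. At $k=1$, the decreasing search tests the trials $s=1,\dots,S_1+1$, where the last trial is the one that fails \eqref{eq:universal_g_ineq_s}. Every trial runs the subroutine completely, and trial $s=1$ has $T_1^1=1$. Hence the base count is $\sum_{s=1}^{S_1+1}T_1^s+\sum_{k\ge2}\sum_s T_k^s$.

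\emph{Extra evaluations from doubling.} Fix a call $(k,s)$. Each doubling multiplies $M$ by $2$. The only decreases come from the reset $M_k^{0,s}=\max\{M_0,L_k^s\}$ at the start of the call; the factor $c_k^{t,s}\ge1$ is not a decrease. The plan is to bound the total number of doublings in a call, $\sum_t i_k^{t,s}$, by $\log_2$ of the ratio between the largest value $M$ can reach and the starting value, with the sequence normalized to undo the $c$ factors.

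The first point is that by \eqref{eq:universal_f_ineq_s}, once $M\ge\overline{M}_{\nu,\varepsilon}(\gamma_k^s\alpha_k^{t,s})^{-\frac{1-\nu}{1+\nu}}$ the test succeeds. This is the Nesterov universal argument underlying Proposition~\ref{thm:universal_estimates_bound}. Hence every trial value stays below $2\overline{M}_{\nu,\varepsilon}(\gamma_k^s\alpha_k^{t,s})^{-\frac{1-\nu}{1+\nu}}$.

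The second point is that the product of the factors $c$ and $2^{i}$ telescopes. We use $A_k^{t,s}=M_k^{t,s}(\alpha_k^{t,s})^2\ge L_k^s$ from Proposition~\ref{thm:Ak1s_ge_Lks}. Together with the bound $(2/t)^{\frac{1+3\nu}{1+\nu}}$ on $A$, this converts the per-call count into $\frac{2(1+\nu)}{1+3\nu}\log_2$ of the ratio $\overline{M}_{\nu,\varepsilon}(\gamma_k^s)^{-\frac{1-\nu}{1+\nu}}/L_k^s$, possibly times a constant, in the same way as in the proof of Lemma~\ref{thm:additional_feval_dueto_M}. By Lemma~\ref{thm:recursion_property}\ref{item:Gamma_k_ge_L1_over_ksq} we have $\Gamma_k\ge L_1/k^2$, so $\gamma_k^s\ge\sqrt{L_1/(L_k^s k^2)}$. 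Combining this with $L_k^s\ge L_1/2$ for $k\ge2$, from $L_k^1=2L_{k-1}$ or $L_k^s$ nondecreasing, gives a per-call bound of $\frac{2(1+\nu)}{3\nu+1}\log_2\big((4\overline{M}_{\nu,\varepsilon}/L_1)k^{\frac{1-\nu}{1+\nu}}\big)$. The extra factor $2$ relative to Lemma~\ref{thm:additional_feval_dueto_M} absorbs $L_k^s\ge L_1/2$.

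\emph{The case $k=1$.} The trials $L_1^s=L_1^1/2^{s-1}$ go down to $L_1^{S_1+1}=L_1/2$. Here $\gamma_1^s=1$, so the same per-call bound holds with denominator $L_1^{s}\ge L_1/2$. There are $S_1+1$ calls rather than $S_1$. The first trial $s=1$ performs a joint doubling from $M_0$, whose count is not bounded this way. The plan is to absorb the extra call and this first-trial doubling into the summed log term, using that $\log_2(4\overline{M}_{\nu,\varepsilon}/L_1)\ge1$ dominates the surplus.

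\emph{Expected main obstacle.} The hardest step is the $k=1$ accounting. Specifically, it is bounding the doublings from $M_0$ to $L_1^1$ in trial $s=1$, which are about $\log_2(L_1^1/M_0)$, against $L_1$ rather than against $M_0$. I would first try to show that these doublings coincide with the $\nabla g$ test failures, since $L_1^1=M_1^{1,1}$ is doubled jointly. This makes them charged to the outer doubling rather than to $f'$, or else bounded by $\log_2(2L/M_0)$ counted alongside the $\nabla g$ bound. If that fails, I would argue that $L_1^1\le 2\max\{L,\overline{M}_{\nu,\varepsilon}\}=2\overline{M}_{\nu,\varepsilon}$, so the excess is covered by one extra log term of the stated form.
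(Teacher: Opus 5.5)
You have the paper's route for every call except $(k,s)=(1,1)$: one $f'$ evaluation per inner iteration, plus the doublings. Using $c_k^{t,s}\ge1$, the doublings telescope into $\log_2(M_k^{T_k^s-1,s}/M_k^{0,s})$, which is bounded via Proposition~\ref{thm:universal_estimates_bound}, $A_k^{t,s}\ge L_k^s$, $\Gamma_k^s\ge L_1/k^2$ and $M_k^{0,s}\ge L_k^s\ge L_1/2$.

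\textbf{The gap.} The gap is exactly the step you flag as the main obstacle, and none of your proposed resolutions closes it. The missing observation is this. In trial $(1,1)$ we have $\gamma_1^1=\alpha_1^{1,1}=1$, so the evaluation point $\xlb_1^{1,1}=x_0$ does not depend on the trial value of $L_1^1=M_1^{1,1}$. Each joint doubling only re-solves the prox step and re-checks \eqref{eq:universal_f_ineq_s} and \eqref{eq:universal_g_ineq_s}. That needs new function values of $f$ and $g$, but it reuses $f'(x_0)$ and $\nabla g(x_0)$.

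Consequences:
\begin{itemize}
\item Trial $s=1$ costs exactly one $f'$ evaluation. This is the term $T_1^1=1$ already in the base count, and the trial contributes no logarithmic term.
\item The remaining trials $s=2,\dots,S_1+1$ are exactly $S_1$ calls, each with $\gamma_1^s=1$ and $L_1^s\ge L_1/2$. Each therefore costs at most $T_1^s+\tfrac{2(1+\nu)}{3\nu+1}\log_2(4\overline{M}_{\nu,\varepsilon}/L_1)$.
\item This is why the coefficient at $k=1$ is $S_1$. There is no extra call to absorb.
\end{itemize}

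\textbf{Why your fallbacks fail.}
\begin{itemize}
\item The number of doublings in trial $(1,1)$ is $\log_2(L_1^1/M_0)$. Since $M_0\in(0,L]$ is arbitrary, this cannot be bounded by any expression in $L_1$, $\overline{M}_{\nu,\varepsilon}$ and $k$. The bound $L_1^1\le 2\overline{M}_{\nu,\varepsilon}$ is useless as $M_0\to0$.
\item ``Charging'' these doublings to the outer $\nabla g$ test does not remove them from a count of $f'$ evaluations. They drop out only because no new $f'$ evaluation is needed.
\item There is no slack to absorb anything. If $S_1=1$, the single log term available at $k=1$ is used up entirely by trial $s=2$, where $L_1^2=L_1/2$.
\end{itemize}

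\textbf{Smaller points.}
\begin{itemize}
\item The per-call telescoping must stop at $t=T_k^s-1$, using $i_k^{T_k^s,s}=0$. If the first trial failed at the final step, $A_k^{t,s}$ would exceed $L_k^s$ and the subroutine would not terminate there.
\item This matters because $M_k^{T_k^s,s}=L_k^s/(\alpha_k^{T_k^s,s})^2$ can be inflated by $c_k^{T_k^s-1,s}$ and is not controlled by Proposition~\ref{thm:universal_estimates_bound}. So your claim that every trial value stays below $2\overline{M}_{\nu,\varepsilon}(\gamma_k^s\alpha_k^{t,s})^{-\frac{1-\nu}{1+\nu}}$ is false at $t=T_k^s$.
\item $L_k^1=L_{k-1}$, not $2L_{k-1}$. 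This is harmless, since $L_k^s\ge L_1$ for $k\ge2$.
\end{itemize}
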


With the help of Proposition \ref{thm:universal_GammaA_bound_final_version} and Lemmas \ref{thm:additional_geval_dueto_L_final_version} and \ref{thm:additional_feval_dueto_M_alg_final_version}, we present the needed number of evaluations of $\nabla g$ and $f'$ to get an $\varepsilon$-solution by PFUGS in Theorem \ref{thm:universal_overall_complexity_final_version}.

\begin{theorem}\label{thm:universal_overall_complexity_final_version}
To compute an $\varepsilon$-solution,
Algorithm~\ref{alg:universal_linesearch_decreas_increase} with Subroutine~\ref{alg:universal_subroutine_linesearch} requires at most $
(4\sqrt{2}+3)\sqrt{LV^*/\varepsilon}$ 
evaluations of $\nabla g$ and 
\(
D(\tfrac{\overline{M}_{\nu,\varepsilon}V^*}{\varepsilon})^{\tfrac{1+\nu}{1+3\nu}}
    +
    [(4\sqrt{2}+3)(\tfrac{LV^*}{\varepsilon})^{\tfrac{1}{2}}+\log_2(\tfrac{8\overline{M}_{\nu,\varepsilon}V^*}{\varepsilon})]
     [\tfrac{13+3\nu}{3\nu+1} 
     +  
\tfrac{2+2\nu}{3\nu+1} \log_2(\tfrac{\overline{M}_{\nu,\varepsilon}V^*}{\varepsilon}) 
+ 
 \tfrac{1-\nu}{3\nu+1}\log_2(\tfrac{LV^*}{\varepsilon})]
\)
evaluations of $f'$, where 
\(
D:=(32+16\sqrt{2})(1+\sqrt{5})^{\tfrac{1-\nu}{1+3\nu}}2^{\tfrac{1+\nu}{1+3\nu}}
\) and $V^*:=V(x_0,x^*)$.
\end{theorem}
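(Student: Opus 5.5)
The plan mirrors the proof of Theorem~\ref{thm:universal_overall_complexity_for_naive}, with the role of the assumption $L_0\ge\varepsilon/V^*$ taken over by the decreasing search in the first iteration, as in Corollary~\ref{thm:GDS_corollary}.

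The first step is to handle the first outer iteration. During the decreasing search over $s\ge2$, there are two cases.
\begin{enumerate}
\item Suppose condition \eqref{eq:universal_g_ineq_s} is tested and holds at some trial $L_1^{s_\varepsilon}\in(\varepsilon/(8V^*),\varepsilon/(4V^*)]$. Since $\gamma_1^s=1$, we have $\Gamma_1^{s}=L_1^{s}$, and Proposition~\ref{thm:Ak1s_ge_Lks} gives $A_1^{T_1^s,s}=L_1^s$. Theorem~\ref{thm:universal_parameters_and_convergence} (with $N=1$, applied to that trial) then shows $\xub_1^{s_\varepsilon}$ is already an $\varepsilon$-solution, and we may stop.
\item Otherwise the accepted value satisfies $L_1>\varepsilon/(4V^*)$, or more crudely $L_1\ge\varepsilon/(8V^*)$ once the first rejected trial is accounted for.
\end{enumerate}
In either case the relevant $L$-trials are bounded below by $\varepsilon/(8V^*)$. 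This is exactly what produces the constant $8$ inside $\log_2(8\overline{M}_{\nu,\varepsilon}V^*/\varepsilon)$ and the doubled constant $D$ relative to $C$.

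The second step counts the outer iterations. Let $N_\varepsilon$ be the first index with $\Gamma_{N_\varepsilon}^{S_{N_\varepsilon}}\le\varepsilon/(4V^*)$. By \eqref{eq:universal_final_bound}, $\xub_{N_\varepsilon}$ is an $\varepsilon$-solution. Proposition~\ref{thm:universal_GammaA_bound_final_version} gives $\Gamma_N^{S_N}\le 8L/(N+1)^2$, hence $N_\varepsilon\le\sqrt{32LV^*/\varepsilon}$. Lemma~\ref{thm:additional_geval_dueto_L_final_version} then bounds the $\nabla g$ count by $N_\varepsilon+\log_2(2L/L_1)\le\sqrt{32LV^*/\varepsilon}+\log_2(16LV^*/\varepsilon)$. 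The logarithm is absorbed via $\log_2(16Q)\le 3\sqrt{Q}$, with $Q:=LV^*/\varepsilon$. The inequality is assumed to hold on the relevant range of $Q$ (for instance $Q\ge1/8$, which follows from $L_1\le 2L$ and $L_1\ge\varepsilon/(8V^*)$); this gives $(4\sqrt2+3)\sqrt{LV^*/\varepsilon}$.

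The third step bounds the $f'$ count through Lemma~\ref{thm:additional_feval_dueto_M_alg_final_version}.
\begin{itemize}
\item \emph{Inner-iteration sum.} If $N_\varepsilon\ge2$, then $\Gamma_{N_\varepsilon-1}^{S_{N_\varepsilon-1}}>\varepsilon/(4V^*)$. Inverting the second bound in Proposition~\ref{thm:universal_GammaA_bound_final_version} bounds $\sum_{s=1}^{S_1+1}T_1^s+\sum_{k\ge2}\sum_s T_k^s$ by a constant times $(\overline{M}_{\nu,\varepsilon}V^*/\varepsilon)^{(1+\nu)/(1+3\nu)}$, which should yield $D$. If $N_\varepsilon=1$, or in case 1 of the first step, the first-iteration inner work must be bounded directly. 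The plan is to use the inner-iteration bound behind \eqref{eq:universal_A_bound}, namely $T_1^s\lesssim(\overline{M}_{\nu,\varepsilon}/L_1^s)^{(1+\nu)/(1+3\nu)}$, and sum it over the geometrically decreasing trials $L_1^s$. The sum is dominated by its last term, which is at least $\varepsilon/(8V^*)$, giving the same order.
\item \emph{Logarithmic term.} Bound $\sum_k S_k$ by the $\nabla g$ count plus the first-iteration $S_1$. Since $L_1^1\le 2L$ and the last trial is at least $\varepsilon/(8V^*)$, we get $S_1\le\log_2(16LV^*/\varepsilon)$; this is presumably where the extra additive $\log_2(8\overline{M}_{\nu,\varepsilon}V^*/\varepsilon)$ factor originates. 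Then bound each log inside the sum, using $L_1\ge\varepsilon/(8V^*)$ and $k\le N_\varepsilon$, by $\tfrac{13+3\nu}{2+2\nu}+\log_2(\overline{M}_{\nu,\varepsilon}V^*/\varepsilon)+\tfrac{1-\nu}{2+2\nu}\log_2(LV^*/\varepsilon)$.
\end{itemize}

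The main obstacle is the first-iteration accounting, especially case 1 of the first step, where the algorithm stops mid-search. The cost of every trial, including the rejected one at $S_1+1$, must be shown to be of order $(\overline{M}_{\nu,\varepsilon}V^*/\varepsilon)^{(1+\nu)/(1+3\nu)}$ without any lower bound on $M_0$. This relies on $M_k^{0,s}=\max\{M_0,L_k^s\}$ and on the geometric decrease of $L_1^s$. The inner work per trial decays only like $(L_1^s)^{-(1+\nu)/(1+3\nu)}$, so the geometric sum is dominated by its last term.
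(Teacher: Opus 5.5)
Your skeleton is the paper's. It splits the first iteration into the same two cases, bounds $N_\varepsilon$ via $\Gamma_N^{S_N}\le 8L/(N+1)^2$, inverts Proposition~\ref{thm:universal_GammaA_bound_final_version}, sums \eqref{eq:T_ks_bound_by_gammaGamma} geometrically over the halving trials, and finishes with Lemma~\ref{thm:additional_feval_dueto_M_alg_final_version}. However, your accounting of the first-iteration search rests on a false claim. You assert $L_1^1\le 2L$ and deduce $S_1\le\log_2(16LV^*/\varepsilon)$. In PFUGS, $L_1^1=M_1^{1,1}$ is doubled until \emph{both} \eqref{eq:universal_f_ineq_s} and \eqref{eq:universal_g_ineq_s} hold, so $f$ pushes it up as well. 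The only available bound is $M_1^{1,1}\le 2\overline{M}_{\nu,\varepsilon}$, from Proposition~\ref{thm:universal_estimates_bound} with $\gamma_1^1=\alpha_1^{1,1}=1$ and $L\le\overline{M}_{\nu,\varepsilon}$. For example, if $M_1^{1,1}=2^{10}L$ and $LV^*/\varepsilon=1$, every halved trial with $L_1^s\ge L$ passes \eqref{eq:universal_g_ineq_s}. Then $S_1\ge 11$, whereas $\log_2(16LV^*/\varepsilon)=4$. The correct count is $S_1-1=\log_2(M_1^{1,1}/L_1)\le\log_2(8\overline{M}_{\nu,\varepsilon}V^*/\varepsilon)$, which gives $\sum_k S_k\le N_\varepsilon+\log_2(2L/L_1)+\log_2(M_1^{1,1}/L_1)$. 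This, not the lower bound $\varepsilon/(8V^*)$ on the trials, is where the $\log_2(8\overline{M}_{\nu,\varepsilon}V^*/\varepsilon)$ in the statement comes from. Your first case needs the same fact, and there your plan is incomplete. You only bound $\sum_s T_1^s$, but the $f'$ count also contains the backtracking overhead $\sum_{s=2}^{s_\varepsilon}\log_2(M_1^{T_1^s-1,s}/M_1^{0,s})$. Bounding it requires the number of trials, $s_\varepsilon-1=\log_2(M_1^{1,1}/L_1^{s_\varepsilon})\le\log_2(2\overline{M}_{\nu,\varepsilon}/L_1^{s_\varepsilon})$. The result is a $\log_2^2(16\overline{M}_{\nu,\varepsilon}V^*/\varepsilon)$ term, which you must then show is dominated by the stated bound.

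Second, in the second case you weaken $L_1>\varepsilon/(4V^*)$ to $L_1\ge\varepsilon/(8V^*)$ ``to account for the rejected trial.'' But $L_1^{S_1+1}=L_1/2$ is already covered in two places. The first is the factor $4$ (rather than $2$) inside the logarithm of Lemma~\ref{thm:additional_feval_dueto_M_alg_final_version}. The second is the index $S_1+1$ in Proposition~\ref{thm:universal_GammaA_bound_final_version}. The extra factor of $2$ breaks the explicit constants. The absorption you invoke, $\log_2(16Q)\le 3\sqrt{Q}$ for $Q\ge 1/8$, is false: at $Q=1$ it reads $4\le 3$. The per-$k$ logarithm likewise becomes $\tfrac{15+5\nu}{2+2\nu}+\cdots$ instead of $\tfrac{13+3\nu}{2+2\nu}+\cdots$. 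Keep $L_1>\varepsilon/(4V^*)$, which holds in that case, so that $\log_2(8LV^*/\varepsilon)$ and $16\overline{M}_{\nu,\varepsilon}V^*/\varepsilon$ appear as in the paper. There is no slack here, since $\log_2(8Q)$ and $3\sqrt{Q}$ coincide at $Q=1$. Finally, in that case $\Gamma_1^{S_1}=L_1>\varepsilon/(4V^*)$ already forces $N_\varepsilon\ge 2$, so your ``$N_\varepsilon=1$'' subcase is vacuous.
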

\begin{proof}
    At the first outer iteration, Algorithm~\ref{alg:universal_linesearch_decreas_increase} performs a decreasing search for an estimate of $L$ over $s\ge2$. During this decreasing search process, if, at line~\ref{state:pfugs_k1_ineq} in Algorithm~\ref{alg:universal_linesearch_decreas_increase}, the smoothness inequality \eqref{eq:universal_g_ineq_s} is tested and not violated at some trial value $L_1^{s_\varepsilon}$ such that $\varepsilon/(8V^*)<L_1^{s_\varepsilon}\le \varepsilon/(4V^*)$, then Theorem~\ref{thm:universal_parameters_and_convergence} with Proposition~\ref{thm:Ak1s_ge_Lks} indicates that $\xub_1:=\xub_1^{s_\varepsilon}$ is an $\varepsilon$-solution, and only one  evaluation of $\nabla g$ is needed by Lemma~\ref{thm:additional_geval_dueto_L_final_version}. 
    Moreover, we make the following observations. First, by similar arguments in \eqref{eq:pfugs_sumT1s} we have \(
\sum\nolimits_{s=1}^{s_\varepsilon} T_1^s \le 
2(2\overline{M}_{\nu,\varepsilon})^{\tfrac{1+\nu}{1+3\nu}}
\sum\nolimits_{s=1}^{s_\varepsilon}\gamma_1^s/(\Gamma_1^s)^{\tfrac{1+\nu}{1+3\nu}}
\le
2(2\overline{M}_{\nu,\varepsilon}/\Gamma_1^{s_\varepsilon})^{\tfrac{1+\nu}{1+3\nu}}
/(1-(2)^{-\tfrac{1+\nu}{1+3\nu}})
\). Second, the number of tested estimates of $L$ in the decreasing process is  
$s_\varepsilon-1 = \log_2(M_1^{1,1}/L_1^{s_\varepsilon})\le 
    \log_2(2\overline{M}_{\nu,\varepsilon}/\Gamma_1^{s_\varepsilon})
    $ by Proposition~\ref{thm:universal_estimates_bound}, $\gamma_1^s=1$ and $\Gamma_1^s=L_1^s$. Third, by similar arguments to \eqref{eq:universal_log_over_log}, we have $\log_2\tfrac{M_1^{T_1^s-1,s}}{M_1^{0,s}}
    \le
    \tfrac{2(1+\nu)}{1+3\nu}\log_2\tfrac{2\overline{M}_{\nu,\varepsilon}}{\Gamma_1^{s_\varepsilon}}
    $. Hence, by similar arguments of \eqref{eq:pfugs_total_feval} and the above observations, the needed number of evaluations of $f'$ is at most 
    \(
\sum_{s=1}^{s_\varepsilon}T_1^s
+\sum_{s=2}^{s_\varepsilon}
\log_2\tfrac{M_1^{T_1^s-1,s}}{M_1^{0,s}}
\le   
2(\tfrac{16\overline{M}_{\nu,\varepsilon}V^*}{\varepsilon})^{\tfrac{1+\nu}{1+3\nu}}/(1-(2)^{-\tfrac{1+\nu}{1+3\nu}})
+
\tfrac{2(1+\nu)}{1+3\nu}
\log_2^2\tfrac{16\overline{M}_{\nu,\varepsilon}V^*}{\varepsilon}
\), which is covered by the bound in the statement.
Otherwise, the decreasing search process accepts some $L_1>\varepsilon/(4V^*)$. Let $N_\varepsilon\ge2$ be the outer index such that $\Gamma_{N_\varepsilon}^{S_{N_\varepsilon}}
\le 
\varepsilon/(4V^*)
<
\Gamma_{N_\varepsilon-1}^{S_{N_\varepsilon-1}}
$. Theorem~\ref{thm:universal_parameters_and_convergence} with Proposition~\ref{thm:Ak1s_ge_Lks} indicates that $\xub_{N_\varepsilon}$ is an $\varepsilon$-solution and by $\Gamma_{N_\varepsilon}^{S_{N_\varepsilon}}\le 8L/(N_\varepsilon+1)^2$ in Proposition~\ref{thm:universal_GammaA_bound_final_version} we have $N_\varepsilon\le\lceil\sqrt{32LV^*/\varepsilon}-1\rceil
$. Hence  Lemma~\ref{thm:additional_geval_dueto_L_final_version} indicates that the number of evaluations of $\nabla g$ is at most $\sqrt{32LV^*/\varepsilon} + \log_2 (8LV^*/\varepsilon)\le
(4\sqrt{2}+3)\sqrt{LV^*/\varepsilon}$. 
Moreover, from Proposition~\ref{thm:universal_GammaA_bound_final_version} we have 
 \(
 \sum\nolimits_{s=1}^{S_1+1}T_1^s + \sum\nolimits_{k=2}^{N_\varepsilon}\sum\nolimits_{s=1}^{S_k}T_k^s 
 \le 
 (32+16\sqrt{2})(1+\sqrt{5})^{\tfrac{1-\nu}{1+3\nu}}(\tfrac{2\overline{M}_{\nu,\varepsilon}V^*}{\varepsilon})^{\tfrac{1+\nu}{1+3\nu}}.
 \)
 By the above bound on $N_\varepsilon$, we have that 
 \(
 \log_2((\tfrac{4\overline{M}_{\nu,\varepsilon}}{L_1})k^{\tfrac{1-\nu}{1+\nu}}) 
 \le
 \log_2((\tfrac{16\overline{M}_{\nu,\varepsilon}V^*}{\varepsilon})N_\varepsilon^{\tfrac{1-\nu}{1+\nu}}) 
 \le 
 \tfrac{13+3\nu}{2+2\nu} + 
\log_2(\tfrac{\overline{M}_{\nu,\varepsilon}V^*}{\varepsilon}) 
+ 
 \tfrac{1-\nu}{2+2\nu}\log_2(\tfrac{LV^*}{\varepsilon})
 \), for $k\le N_\varepsilon$. From analysis in Lemma~\ref{thm:additional_geval_dueto_L_final_version} we have $\sum_{k=1}^{N_\varepsilon}S_k\le N_\varepsilon+\log_2(2L/L_1) + \log_2(M_1^{1,1}/L_1)
 \le 
(4\sqrt{2}+3)\sqrt{LV^*/\varepsilon}
+
\log_2(8\overline{M}_{\nu,\varepsilon}V^*/\varepsilon)
$.
Applying the above relations to Lemma~\ref{thm:additional_feval_dueto_M_alg_final_version} completes this proof.
\end{proof}
We conclude this section by noting that the discussion regarding  complexities after  Theorem~\ref{thm:universal_overall_complexity_for_naive} applies to 
Theorem~\ref{thm:universal_overall_complexity_final_version} as well.
\section{Numerical Experiments}
\label{sec:numerical_experiments}
In this section, we conduct two numerical experiments designed to assess the practical performance of PFUGS, focusing in particular on its sliding mechanism in Subsection~\ref{sec:experiment1} and parameter-free nature in Subsection~\ref{sec:experiment2}. All experiments were performed in MATLAB R2025b on an iMac with Apple M1 chip and 16 GB RAM.
Code is available at \url{https://github.com/YanWu0/PFUGS}.
\subsection{Penalized matrix game}
\label{sec:experiment1}
We first consider the following penalized matrix-game problem. Given a payoff matrix $A\in\mathbb{R}^{n\times m}$, a linear operator $B\in\mathbb{R}^{p\times n}$, and a vector $d\in\mathbb{R}^p$, we seek a mixed strategy $x$ that balances the matrix-game objective with a quadratic penalty term. The problem is formulated as $\Phi^* =\min\nolimits_{x \in \Delta_n} \Phi(x)
:= g(x)+f(x)=
\lambda \|Bx-d\|_2^2 + \max\nolimits_{y \in \Delta_m}\langle x, Ay\rangle$,
where $\Delta_{(\cdot)}$ denotes the standard simplex. 

In this experiment, we set $n=2000$, $m=500$, $p=2000$, and $\lambda=0.05$. Linear operators $A$, $B$, and $d$ are generated randomly with entries drawn independently from uniform $[-1,1]$. 
The penalty term $g$ is more expensive to evaluate due to its higher dimensionality, while
$\lambda$ controls its curvature. Such configuration makes it advantageous to process $g$ in the main PFUGS algorithm while handling $f$ in subroutine.

We use Nesterov's fast gradient method (FGM) \cite{nesterov2015universal} as the benchmark. Since the feasible set is simplex, both PFUGS and FGM use the entropy distance-generating function, whose associated prox-function is the Kullback--Leibler divergence. To compare their performance, we generate ten independent instances and initialize both algorithms at the uniform point $x_0=(1/n,\ldots,1/n)^\top$. Following the experimental setup used for FGM in \cite{nesterov2015universal}, we test accuracies $\varepsilon\in\{2^{-5},\dots,2^{-10}\}$. For each $\varepsilon$, both algorithms run until they first produce an iterate $\xub_k$ satisfying $\Phi(\xub_k)-\Phi^*\le \varepsilon$, where the reference value $\Phi^*$ is computed by \texttt{quadprog}. Table~\ref{table:experiment1} reports the mean of ten instances, with standard deviations shown in parentheses, for running time, achieved objective gap, number of (sub)gradient evaluations of $f$ (``fsubg'') and $g$ (``ggrad'').
\begin{table}[htbp]
\centering
\caption{Average performance of PFUGS and FGM}
\label{table:experiment1}
\scriptsize
\setlength{\tabcolsep}{4pt}
\renewcommand{\arraystretch}{1.15}
\begin{tabular}{c c c c c c}
\hline
$\varepsilon$ & Algorithm & Running time (s) & Gap & fsubg & ggrad \\
\hline
\multirow{2}{*}{$2^{-5}$}
& PFUGS & $0.4(\pm0.03)$ & $3.0{\times}10^{-2}(\pm6.5{\times}10^{-4})$ & $1377(\pm106)$ & $24(\pm1)$ \\
& FGM   & $2.8(\pm0.4)$ & $3.1{\times}10^{-2}(\pm5.2{\times}10^{-5})$ & $607(\pm93)$ & $607(\pm93)$ \\
\hline
\multirow{2}{*}{$2^{-6}$}
& PFUGS & $1.2(\pm0.1)$ & $1.5{\times}10^{-2}(\pm1.9{\times}10^{-4})$ & $5214(\pm401)$ & $35(\pm5)$ \\
& FGM   & $11.4(\pm1.6)$ & $1.6{\times}10^{-2}(\pm4.0{\times}10^{-6})$ & $2431(\pm340)$ & $2431(\pm340)$ \\
\hline
\multirow{2}{*}{$2^{-7}$}
& PFUGS & $4.6(\pm 0.4)$ & $7.7{\times}10^{-3}(\pm9.5{\times}10^{-5})$ & $21543(\pm1583)$ & $49(\pm1)$ \\
& FGM   & $46.1(\pm6.1)$ & $7.8{\times}10^{-3}(\pm9.3{\times}10^{-7})$ & $9830(\pm1342)$ & $9830(\pm1342)$ \\
\hline
\multirow{2}{*}{$2^{-8}$}
& PFUGS & $18.7(\pm1.6)$ & $3.9{\times}10^{-3}(\pm3.4{\times}10^{-5})$ & $90822(\pm7627)$ & $70(\pm1)$ \\
& FGM   & $167.6(\pm24.1)$ & $3.9{\times}10^{-3}(\pm1.2{\times}10^{-7})$ & $39906(\pm5499)$ & $39906(\pm5499)$ \\
\hline
\multirow{2}{*}{$2^{-9}$}
& PFUGS & $77.0(\pm6.8)$ & $1.9{\times}10^{-3}(\pm1.1{\times}10^{-5})$ & $382500(\pm31704)$ & $102(\pm16)$ \\
& FGM   & $680.8(\pm99.9)$ & $2.0{\times}10^{-3}(\pm1.1{\times}10^{-8})$ & $161332(\pm21958)$ & $161332(\pm21958)$ \\
\hline
\multirow{2}{*}{$2^{-10}$}
& PFUGS & $318.6(\pm29.9)$ & $9.7{\times}10^{-4}(\pm3.4{\times}10^{-6})$ & $1587005(\pm139703)$ & $145(\pm23)$ \\
& FGM   & $2698.9(\pm363.0)$ & $9.8{\times}10^{-4}(\pm1.2{\times}10^{-9})$ & $649097(\pm88131)$ & $649097(\pm88131)$ \\
\hline
\end{tabular}
\begin{flushleft}
\footnotesize
Note: Values are reported as mean ($\pm$ standard deviation) over ten independent instances.
\end{flushleft}
\end{table}

Table~\ref{table:experiment1} shows that PFUGS consistently outperforms FGM in wall-clock time across all tested accuracies. The improvement becomes more pronounced as the target accuracy tightens. This speedup is mainly due to the dramatic reduction in gradient evaluations of the penalty term $g$. Although PFUGS performs more subgradient evaluations of the max term $f$, this tradeoff is favorable in the present regime. Overall, the results clearly demonstrate the practical advantage of the sliding mechanism.
\subsection{Smoothed penalized matrix game}
\label{sec:experiment2}
We next consider a smoothed variant of the penalized matrix-game problem. Starting from the model in Experiment~\ref{sec:experiment1}, we replace the max term by its entropy-smoothed counterpart
\(
\tilde f(x)
:=
\mu \log \sum_{j=1}^m \exp\!\bigl(\langle A_j,x\rangle/\mu\bigr),
\)
where $\mu>0$ is the smoothing parameter and $A_j$ denotes the $j$-th column of $A$. The resulting problem is
\(
\Phi_\mu^\star
:=
\min_{x\in\Delta_n}\Phi_\mu(x)
=
g(x)+\tilde f(x).
\)

We use the same data configuration, uniform initialization, and entropy prox setup as in Experiment~\ref{sec:experiment1}, with smoothing parameter  $\mu=0.0001$, target tolerance $\varepsilon=2^{-10}$ and reference value $\Phi^*_\mu$ computed by \texttt{fmincon}.
For comparison, we use accelerated gradient sliding method (AGS) \cite{lan2022accelerated} as a benchmark. Unlike PFUGS, AGS requires prescribed smoothness constants for the two objective components. 
In the entropy setting, the relevant smoothness constant for penalty term $g$ is 
\(
L_g
=
2\lambda \max_{i,j}|(B^\top B)_{ij}|
\), which can be computed exactly. 
For component $\tilde f$, we use the computable reference scale
\(
M_{\tilde{f}}
=
\max_{1\le i\le n}(\max_j A_{ij}-\min_j A_{ij})^2/4\mu,
\) which is an upper bound on the true smoothness constant of $\tilde{f}$.
To assess the sensitivity of AGS to parameter selection, we test it over all nine pairs 
\(
(\hat{L}_g, \hat{M}_{\tilde{f}})
\in 
\{L_g/100,\;L_g,\;100L_g\}
\times
\{M_{\tilde{f}}/100,\;M_{\tilde{f}},\;100M_{\tilde{f}}\}.
\)
For each pair, AGS is compared with the same PFUGS trajectory by plotting the gap 
$\Phi_\mu(\xub_k)-\Phi^*_\mu$ against running time (in seconds). The panel labels in Figure~\ref{fig:experiment2} describe the values of $(\hat{L}_g, \hat{M}_{\tilde{f}})$ and stopping status.
\begin{figure}[htbp]
\centering
\includegraphics[width=0.99\textwidth]{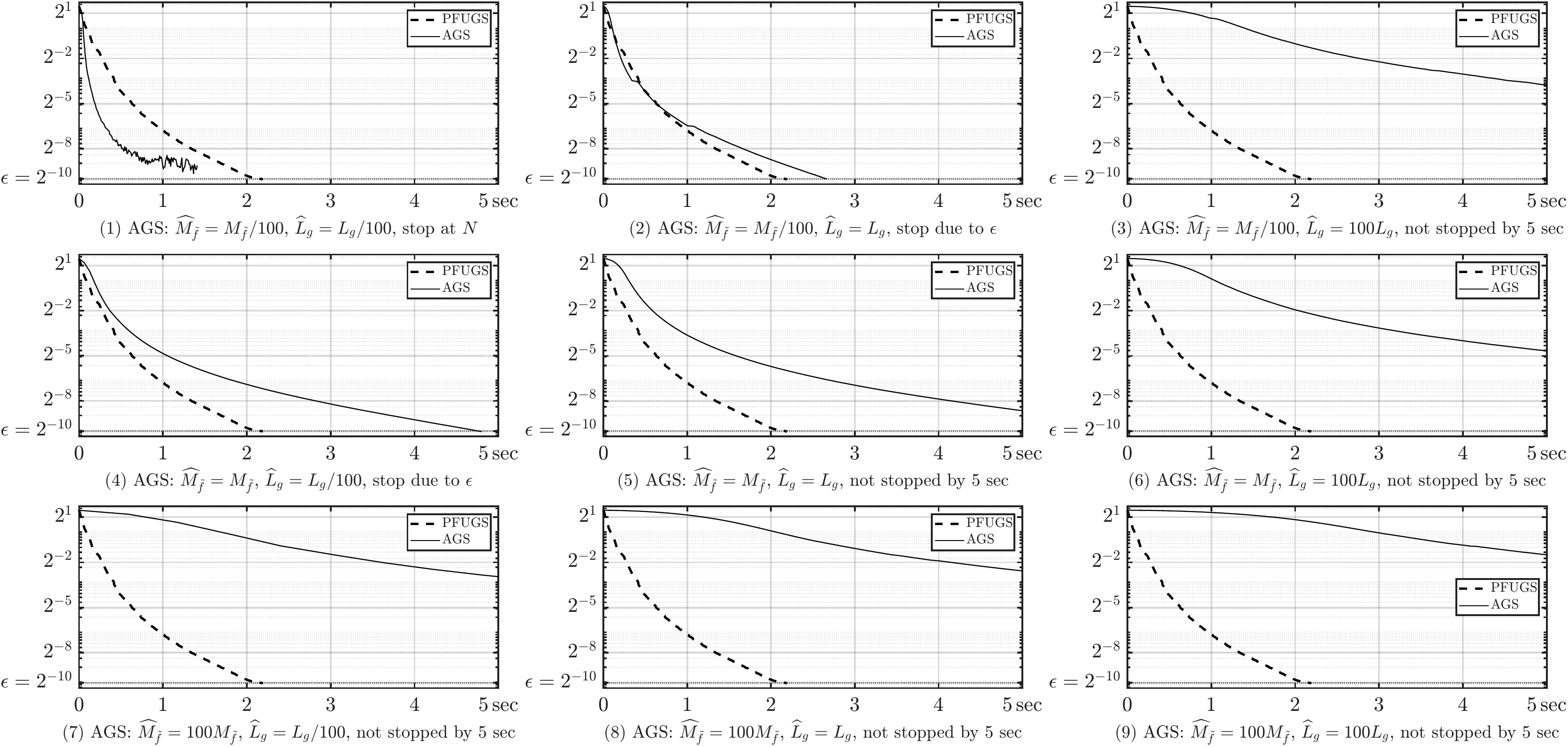}
\caption{Comparison of PFUGS and AGS under predefined smoothness constants}
\label{fig:experiment2}
\end{figure}

Figure~\ref{fig:experiment2} shows that the practical performance of AGS depends strongly on the guessed smoothness constants. Some choices lead to rapid progress (see subplot 2), while others fail to reach the target accuracy within the theoretical AGS budget associated with the guessed constants (see subplot 1) or within the displayed time horizon (see subplots 3 and 5-9). In contrast, PFUGS does not require these constants as inputs and gives a stable and fast gap reduction. These results highlight that PFUGS avoids manual parameter tuning while maintaining competitive runtime performance.
\section{Conclusion}
\label{sec:conclusion}
This work develops a PFUGS framework for convex composite optimization that adapts to the unknown smoothness parameters of  $f$ and $g$.
This algorithm computes an approximate solution using at most $\mathcal{O}\! ((M_\nu/\varepsilon)^{{2}/{(1+3\nu)}} )$ evaluations of (sub)gradients of $f$ and $\mathcal{O}\! (\sqrt{L/\varepsilon})$ evaluations of gradients of $g$.   
We believe that our results open several avenues for future research, including extension to functionally constrained problems and nonconvex composite optimization.

\appendix
\bibliographystyle{siamplain}
\bibliography{yan}
\section{Some omitted proofs}\label{sec:appendix}
\begin{proof}[Proof of Lemma \ref{thm:GDS_outer_convergence}]
Fix any $k\ge1$. The approximate optimality condition in line~\ref{state:GDS_subproblem_optimality} of Algorithm~\ref{alg:GDS} indicates
\(
    \langle \nabla g(\xtil_{k-1}), \xtil_k-x\rangle +f(\xtil_k)-f(x)
    \le 
    \epsilon_k(x) + \tfrac{\eta_k}{2}(\|x-\xtil_{k-1}\|^2 - \| x- \xtil_k\|^2 - \|\xtil_k-\xtil_{k-1}\|^2).
\)
From \eqref{eq:GDS_g_ineq} and convexity of $g(\cdot)$ we have
\(
     g(\xtil_k)-g(x)
        \le
    \langle \nabla g(\xtil_{k-1}), \xtil_k-x\rangle + \tfrac{L_k}{2}\|\xtil_k-\xtil_{k-1}\|^2.
\)
Summing the above two inequalities while recalling $\eta_k=L_k$ yields
\(
    g(\xtil_k)+ f(\xtil_k)-g(x)-f(x)\le \epsilon_k(x) + \tfrac{\eta_k}{2}(\|x-\xtil_{k-1}\|^2 - \| x- \xtil_k\|^2 ).
\)
Applying the definition of $\xub_N$ and convexity of $g(\cdot)$ and $f(\cdot)$ to the above relation concludes the lemma.
\end{proof}

\begin{proof}[Proof of Lemma \ref{thm:GDS_subroutine_convergence}]
     From \eqref{eq:GDS_f_ineq} and convexity of $f(\cdot)$ we have
  \(
        f(x_k^t)-f(x)
    \le
    \langle \nabla f(x_k^{t-1}), x_k^t-x\rangle + \tfrac{M_k^t}{2}\|x_k^t - x_k^{t-1}\|^2.
    \)
    Applying Lemma \ref{thm:three_point_Breg} to \eqref{eq:GDS_subroutine_update} yields
    \begin{align*}
        \langle \nabla f(x_k^{t-1}) +\nabla g(\xtil_{k-1}),x_k^t-x\rangle
        \le &
        \tfrac{p_k^t}{2}(\|x-x_k^{t-1}\|^2 - \|x-x_k^t\|^2 -\|x_k^t-x_k^{t-1}\|^2)
        \\
        &+
        \tfrac{\eta_k}{2}(\|x-\xtil_{k-1}\|^2 - \|x-x_k^t\|^2 -\|x_k^t-\xtil_{k-1}\|^2).
    \end{align*}
Summing the above two inequalities, multiplying both sides by $(\sum\nolimits_{\tau=1}^{T_k}1/p_k^\tau)^{-1} 1/p_k^t$,
and summing over $t = 1,\ldots,T_k$ while recalling that $p_k^t = M_k^t $ we have that 
\begin{align*}
\begin{aligned}
    &- \tfrac{\eta_k}{2} \|x - \xtil_{k-1}\|^2  + (\sum\nolimits_{\tau=1}^{T_k}\tfrac{1}{p_k^\tau})^{-1} \sum\nolimits_{t=1}^{T_k} \tfrac{1}{p_k^t} (f(x_k^t) - f(x)+ \langle \nabla g(\xtil_{k-1}), x_k^t - x \rangle)
    \\
    \le&
    (\sum\limits_{\tau=1}^{T_k}\tfrac{1}{p_k^\tau})^{-1}\sum\limits_{t=1}^{T_k} \tfrac{1}{p_k^t}
    [
        \tfrac{p_k^t}{2}
        (\|x - x_k^{t-1}\|^2 - \|x - x_k^t\|^2)
        - \tfrac{\eta_k}{2}
        (
             \|x - x_k^t\|^2
            + \|x_k^t - \xtil_{k-1}\|^2
        )
    ]
    .
    \end{aligned}
\end{align*}
Rearranging terms in the above inequality and recalling that $\xtil_k$ is a convex combination of $x_k^t$'s, we have
\begin{align*}
& f(\xtil_k)-f(x)
+ \langle\nabla g(\xtil_{k-1}), \xtil_k-x\rangle - \tfrac{\eta_k}{2}(\|x-\xtil_{k-1}\|^2-\|x-\xtil_k\|^2-\|\xtil_k-\xtil_{k-1}\|^2)
\\
\le &
(2\sum\nolimits_{\tau=1}^{T_k} 1/p_k^\tau )^{-1}(\|x-x_{k-1}\|^2-\|x-x_k\|^2) = \varepsilon_k(x).
\end{align*}
We conclude the lemma immediately by noting that the left-hand-side of the above inequality is exactly $\langle \nabla\phi_k(\xtil_k), \xtil_k - x\rangle + f(\xtil_k) - f(x)$ in the approximate optimality condition in line~\ref{state:GDS_subproblem_optimality} of Algorithm~\ref{alg:GDS}.
\end{proof}
\begin{proof}[Proof of Lemma~\ref{thm:recursion_property}]
Since $\Lambda_1=E_1(\lambda_1)^2=E_1>0$, the quadratic equation
\(
E_2(\lambda_2)^2=\Lambda_1(1-\lambda_2)
\)
admits a unique positive solution given by
\(
\lambda_2
=\tfrac{2}{1+\sqrt{1+4E_2/\Lambda_1}}.
\)
Because $\infty>E_2/\Lambda_1>0$, we have
$\lambda_2\in(0,1)$. Consequently,
\(
0<\Lambda_2=\Lambda_1(1-\lambda_2)<\Lambda_1.
\)
Assuming that $\Lambda_{\tau-1}>0$ for some $\tau\ge3$,
then by similar argument, we have $\lambda_\tau\in(0,1)$ and $0<\Lambda_\tau<\Lambda_{\tau-1}$, which completes the proof of part~\ref{item:lambda_in_01_Lambda_decrease}.

The conclusion in Part~\ref{item:AT_eq_sum_a_over_A} follows immediately from dividing both sides of relation $b_i = (1-\lambda_i)b_{i-1}+\lambda_i d_i$ by $\Lambda_i$, for $i\ge1$, and summing over $i=1,\dots,\tau$, while noticing the recursive relation $1/\Lambda_{i-1} = (1-\lambda_i)/\Lambda_i$ for all $i\ge2$ and $\lambda_1=1$ by definition.

Having established in part~\ref{item:lambda_in_01_Lambda_decrease} the existence and uniqueness of $\lambda_\tau$, part~\ref{item:lambda_Lambda_E} follows from the explicit formula
\(
\lambda_\tau
= \tfrac {2}{1+\sqrt{1+4E_\tau/\Lambda_{\tau-1}}},
 \forall \tau \ge 2,
\)
and 
\(
\Lambda_\tau = \Lambda_{\tau-1}(1-\lambda_\tau).
\)

Given nondecreasing $E_\tau$ and  strictly decreasing $\Lambda_\tau>0$ in part~\ref{item:lambda_in_01_Lambda_decrease}, we have that $E_\tau/\Lambda_{\tau-1}$ is strictly increasing for $\tau\ge2$. Then, the explicit formula of $\lambda_\tau$ indicates the decrease of $\lambda_\tau$ for $\tau\ge2$. So,
part~\ref{item:lambda_decrease} holds since $\lambda_1=1$ and $\lambda_\tau<1$ by part~\ref{item:lambda_in_01_Lambda_decrease}.

We show part~\ref{item:lambda_bound} by induction. Given $\lambda_1=1=2/(1+1)$, the conclusion in part~\ref{item:lambda_bound} holds at $\tau = 1$.
Assume that $\lambda_{\tau-1}\le 2/\tau$ for some $\tau\ge2$.
Then,
$\Lambda_{\tau-1}
= E_{\tau-1}(\lambda_{\tau-1})^2
\le 4E_\tau/\tau^2$, and hence
$\lambda_\tau
= \tfrac{2}{1+\sqrt{1+4E_\tau/\Lambda_{\tau-1}}}
\le \tfrac{2}{1+\sqrt{1+\tau^2}}\le \tfrac{2}{\tau+1}$.

For part~\ref{item:c_def}, the given information $\Lambda_{\tau-1}(1-\lambda_\tau) =\Lambda_{\tau} \le E_\text{fix}<\Lambda_{\tau-1}$ indicates that
$\lambda_\tau
\ge
1- E_\text{fix}/\Lambda_{\tau-1}>0$.
Thus,
$c
=  E_\text{fix}/[(1-E_\text{fix}/\Lambda_{\tau-1})^2 E_\tau]
\ge E_\text{fix}/[(\lambda_\tau)^2 E_\tau]
= E_\text{fix}/\Lambda_\tau
\ge 1$. Finally, the conclusion $\hat{\Lambda}_\tau = E_{\mathrm{fix}}$ follows immediately from noticing $
    \hat{\lambda}_\tau = \tfrac{-\Lambda_{\tau-1}+\sqrt{(\Lambda_{\tau-1})^2 + 4\Lambda_{\tau-1}cE_\tau}}{2cE_\tau}
$, $cE_\tau = \tfrac{\Lambda_{\tau-1}^2E_\text{fix}}{(\Lambda_{\tau-1}-E_\text{fix})^2}$ and $\hat{\Lambda}_\tau = cE_\tau(\hat{\lambda}_\tau)^2$.

The condition \(q\ge1\) indicates
\(
\sqrt{q}(1+\sqrt{1+4E_\tau/\Lambda_{\tau-1}})
\ge
1+\sqrt{1+4qE_\tau/\Lambda_{\tau-1}}.
\)
Then, the conclusion in part~\ref{item:lambda_compare_L_increase} follows immediately from the explicit formula that
\(
\lambda_\tau=\tfrac{2}{1+\sqrt{1+4E_\tau/\Lambda_{\tau-1}}}
\)
and
\(
\overline{\lambda}_\tau=\tfrac{2}{1+\sqrt{1+4qE_\tau/\Lambda_{\tau-1}}}.
\)

The condition \(E_{\tau+1}=E_\tau\) in part~\ref{item:lambda_same_E_ratio} implies that
\(
E_\tau(\lambda_{\tau+1})^2
=
E_\tau(\lambda_\tau)^2(1-\lambda_{\tau+1}),
\)
i.e.,
\(
\lambda_{\tau+1}
=
\tfrac{2(\lambda_\tau)^2}{(\lambda_\tau)^2+\sqrt{(\lambda_\tau)^4+4(\lambda_\tau)^2}}
=
\tfrac{2\lambda_\tau}{\lambda_\tau+\sqrt{(\lambda_\tau)^2+4}}
\ge
(\tfrac{2}{1+\sqrt{5}})\lambda_\tau,
\)
where the last inequality holds since $\lambda_\tau\le 1$ by Part~\ref{item:lambda_in_01_Lambda_decrease}.

We prove part~\ref{item:Gamma_k_ge_L1_over_ksq} by induction. It holds at $\tau=1$, since $\Lambda_1=E_1(\lambda_1)^2=E_1$.
Assume that $\Lambda_{\tau-1}\ge E_1/(\tau-1)^2$ for some $\tau\ge2$. By Part~\ref{item:lambda_bound}, we have $\lambda_\tau\le 2/(\tau+1)$, which leads to
$1-\lambda_\tau \ge (\tau-1)/(\tau+1)$. Hence,
\(
\Lambda_\tau
=
\Lambda_{\tau-1}(1-\lambda_\tau)
\ge
E_1/\tau^2.
\)
\end{proof}

The following lemma (see proof of Lemma 2 in \cite{nesterov2015universal}) is needed by Proposition~\ref{thm:universal_estimates_bound}.
\begin{lemma}\label{thm:universal_M_kt_bound}
Suppose that $f(\cdot)$ satisfies \eqref{eq:holder_descent}, and
$
\hat{M}\ge [ \tfrac{1-\nu}{1+\nu} ( \tfrac{1}{\delta }) ]^{ \tfrac{1-\nu}{1+\nu}}M_\nu^{ \tfrac{2}{1+\nu}}
$, for some $\delta>0$.
Then
$
f(x) \le f(u) +\langle f'(u), x-u\rangle +  \tfrac{\hat{M}}{2}\|x-u\|^2 +  \tfrac{\delta}{2}, \quad \forall x,u\in X.
$
\end{lemma}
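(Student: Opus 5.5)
This is the classical "inexact oracle" lemma of Nesterov's universal gradient method. I would prove it by pointwise optimization of a Young-type inequality. Fix $x,u\in X$ and set $r:=\|x-u\|$. By the H\"older condition \eqref{eq:holder_descent},
\[
f(x)\le f(u)+\langle f'(u),x-u\rangle+\tfrac{M_\nu}{1+\nu}r^{1+\nu}.
\]
It then suffices to show the scalar inequality
\[
\tfrac{M_\nu}{1+\nu}r^{1+\nu}\le \tfrac{\hat M}{2}r^2+\tfrac{\delta}{2}\qquad \text{for all } r\ge0 .
\]

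\textbf{Case $\nu=1$.} With the convention $0^0=1$, the hypothesis reads $\hat M\ge M_1$, and the inequality is immediate.

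\textbf{Case $\nu\in[0,1)$.} Apply the weighted AM--GM (Young) inequality $a^{\theta}b^{1-\theta}\le \theta a+(1-\theta)b$ with $\theta=\tfrac{1+\nu}{2}$, $a=\hat M r^2$ and $b=\delta$, so that $1-\theta=\tfrac{1-\nu}{2}$. This gives
\[
\hat M^{\frac{1+\nu}{2}}\,\delta^{\frac{1-\nu}{2}}\,r^{1+\nu}\;\le\;\tfrac{1+\nu}{2}\,\hat M r^2+\tfrac{1-\nu}{2}\,\delta .
\]
Dividing by $1+\nu$ yields
\[
\tfrac{1}{1+\nu}\,\hat M^{\frac{1+\nu}{2}}\delta^{\frac{1-\nu}{2}}\,r^{1+\nu}\;\le\;\tfrac{\hat M}{2}r^2+\tfrac{1-\nu}{2(1+\nu)}\,\delta\;\le\;\tfrac{\hat M}{2}r^2+\tfrac{\delta}{2}.
\]
The remaining requirement is $M_\nu\le \hat M^{\frac{1+\nu}{2}}\delta^{\frac{1-\nu}{2}}$, i.e.\ $\hat M\ge \delta^{-\frac{1-\nu}{1+\nu}}M_\nu^{\frac{2}{1+\nu}}$. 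This is weaker than the stated hypothesis only by the factor $(\tfrac{1-\nu}{1+\nu})^{\frac{1-\nu}{1+\nu}}\le 1$, so it is not implied automatically, and the constant has to be tracked more carefully.

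\textbf{Sharpening the constant.} Instead of the crude bound $\tfrac{1-\nu}{2(1+\nu)}\le\tfrac12$, I would optimize directly. The function
\[
h(r)=\tfrac{M_\nu}{1+\nu}r^{1+\nu}-\tfrac{\hat M}{2}r^2
\]
is maximized at $r_*^{1-\nu}=M_\nu/\hat M$, with maximum value
\[
\tfrac{1-\nu}{2(1+\nu)}\,M_\nu^{\frac{2}{1-\nu}}\,\hat M^{-\frac{1+\nu}{1-\nu}} .
\]
Requiring this to be at most $\delta/2$ is equivalent to
\[
\hat M^{\frac{1+\nu}{1-\nu}}\;\ge\;\tfrac{1-\nu}{1+\nu}\,\delta^{-1}M_\nu^{\frac{2}{1-\nu}},
\]
and raising both sides to the power $\tfrac{1-\nu}{1+\nu}$ gives exactly the hypothesis of the lemma. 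The only real obstacle is this exponent bookkeeping, which is where the precise constant $[\tfrac{1-\nu}{1+\nu}\tfrac1\delta]^{\frac{1-\nu}{1+\nu}}$ comes from.
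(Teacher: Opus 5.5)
Your proof is correct and is essentially the argument the paper defers to (the proof of Lemma~2 in Nesterov's universal gradient paper): bound $f(x)$ by the H\"older upper model, then verify the scalar inequality $\tfrac{M_\nu}{1+\nu}r^{1+\nu}\le\tfrac{\hat M}{2}r^2+\tfrac{\delta}{2}$ with the sharp constant, which your exact maximization of $h$ does just as Young's inequality with exponents $\tfrac{2}{1+\nu},\tfrac{2}{1-\nu}$ would. As a minor remark, your first Young attempt already yields the exact hypothesis if you take $b=\tfrac{1+\nu}{1-\nu}\delta$ instead of $b=\delta$, and the requirement you obtained with $b=\delta$ is stronger (not weaker) than the stated hypothesis.
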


\begin{proof}[Proof of Proposition~\ref{thm:universal_estimates_bound}]
Since $L_0\le L$ and $L_k=L_k^{S_k}$, the description of $L_k^s$ indicates $L_k\le 2L$.
To prove the bound of $M_k^{1,s}$, note that our assumption of this proposition and the definition of $M_k^{0,s}$ in Algorithm~\ref{alg:universal_linesearch} guarantee that
$M_{k}^{0,s}
= 
\max\{ M_0, L_k^s\}
\le
\max\{ M_0, 2L\}
\le  
2[\tfrac{1-\nu}{1+\nu}(\tfrac{1}{\gamma_k^s\varepsilon})]^{\tfrac{1-\nu}{1+\nu}}M_\nu^{\tfrac{2}{1+\nu}}.
$
Here the last inequality holds due to $\gamma_k^s\le 1$ by Lemma~\ref{thm:recursion_property}\ref{item:lambda_in_01_Lambda_decrease}.
Since $M_k^{1,s}=2^{i_k^{1,s}}M_k^{0,s}$ where $i_k^{1,s}$ is the smallest nonnegative integer such that \eqref{eq:universal_f_ineq_s} holds, Lemma \ref{thm:universal_M_kt_bound} gives the desired bound of $M_k^{1,s}\le 2 [ \tfrac{1-\nu}{1+\nu}  ( \tfrac{1}{\gamma_k^s\alpha_k^{1,s}\varepsilon} ) ]^{ \tfrac{1-\nu}{1+\nu}}M_\nu^{ \tfrac{2}{1+\nu}}$.
Lemma~\ref{thm:recursion_property}\ref{item:lambda_decrease} therefore gives
$M_k^{1,s}\le 2 [ \tfrac{1-\nu}{1+\nu}  ( \tfrac{1}{\gamma_k^s\alpha_k^{2,s}\varepsilon} ) ]^{ \tfrac{1-\nu}{1+\nu}}M_\nu^{ \tfrac{2}{1+\nu}}$. 
Moreover, for any \(1\le t-1\le T_k^s-1\), the first trial in the
\(t\)-th inner iteration is \(c_k^{t-1,s}M_k^{t-1,s}\). If \(c_k^{t-1,s}=1\),
then the desired bound follows from the induction hypothesis and
\(\alpha_k^{t,s}\le \alpha_k^{t-1,s}\). If \(c_k^{t-1,s}>1\), then by the
definition of \(c_k^{t-1,s}\), the first trial satisfies
\(
    c_k^{t-1,s}M_k^{t-1,s}(\alpha_k^{t,s})^2=L_k^s.
\)
If this first trial were  larger than the threshold in
Lemma~\ref{thm:universal_M_kt_bound}, then \eqref{eq:universal_f_ineq_s}
would hold and the subroutine would terminate at iteration \(t\), contradicting
\(t\le T_k^s-1\). Therefore the first trial  is below the threshold, and
the doubling search gives
\(
M_k^{t,s}
\le
2[
\tfrac{1-\nu}{1+\nu}
(
\tfrac{1}{\gamma_k^s\alpha_k^{t,s}\varepsilon}
)
]^{\tfrac{1-\nu}{1+\nu}}
M_\nu^{\tfrac{2}{1+\nu}} .
\)
Performing this induction  concludes the proof.
\end{proof}
\begin{proof}[Proof of Proposition~\ref{thm:universal_GammaA_bound}]
Recall in \eqref{eq:AGamma} that $\Gamma_N^{S_N} = L_N(\gamma_N^{S_N})^2$. By Proposition \ref{thm:universal_estimates_bound} we have $L_N\le 2L$ and by Lemma \ref{thm:recursion_property}\ref{item:lambda_bound} we have $\gamma_N^{S_N}\le 2/(N+1)$. Therefore $\Gamma_N^{S_N}\le 8L/(N+1)^2$ holds. To prove \eqref{eq:Gamma_bound}, we proceed in three steps. 
First, we establish a relation between $\gamma_k^s/(\Gamma_k^s)^{\tfrac{1+\nu}{1+3\nu}}$ and $T_k^s$. Specifically, fix any $k,s\ge1$, by the bound on $A_k^{T_k^s,s}$ and the identity $A_k^{T_k^s,s}=L_k^s$ from Proposition~\ref{thm:Ak1s_ge_Lks} and the relation $\Gamma_k^s=L_k^s(\gamma_k^s)^2$, we have
\(
\tfrac{(\gamma_k^s)^2}{\Gamma_k^s}
=
\tfrac{1}{L_k^s}
=
\tfrac{1}{A_k^{T_k^s,s}}
\ge
 (\tfrac{T_k^s}{2} )^{\tfrac{1+3\nu}{1+\nu}}
\tfrac{1}{2}\,
(\overline{M}_{\nu,\varepsilon})^{-1}
(\gamma_k^s)^{\tfrac{1-\nu}{1+\nu}},
\)
i.e.,
{\small
\begin{equation}\label{eq:T_ks_bound_by_gammaGamma}
T_k^s
\le
 2(2\overline{M}_{\nu,\varepsilon})^{\tfrac{1+\nu}{1+3\nu}}\gamma_k^s/(\Gamma_k^s)^{\tfrac{1+\nu}{1+3\nu}}.
\end{equation}
}
Hence, for all $k\ge1$,
{\small
\begin{equation}\label{eq:sum_T_bound_in_down_up_searching_L}
     \sum\nolimits_{s=1}^{S_k}T_k^s
    \le
2(2\overline{M}_{\nu,\varepsilon})^{\tfrac{1+\nu}{1+3\nu}}
\sum\nolimits_{s=1}^{S_k}\gamma_k^s/(\Gamma_k^s)^{\tfrac{1+\nu}{1+3\nu}}.
\end{equation}
}
Next, we further bound \( \sum\nolimits_{s=1}^{S_k} T_k^s\) for each $k\ge1$ via the right-hand side of \eqref{eq:sum_T_bound_in_down_up_searching_L}.
For any $k\ge1$ and $s\ge1$, Algorithm~\ref{alg:universal_linesearch} ensures $L_k^{s+1}=2L_k^s$. Hence, by Lemma~\ref{thm:recursion_property}\ref{item:lambda_compare_L_increase}, we have
\(
\gamma_k^s\le \sqrt{2}\,\gamma_k^{s+1},
\)
and consequently
{\small \begin{equation}\label{eq:gamma_over_Gamma_change}
\begin{aligned}
\tfrac{\gamma_k^{s+1}}{(\Gamma_k^{s+1})^{\tfrac{1+\nu}{1+3\nu}}}
&=
\tfrac{(\gamma_k^{s+1})^{\tfrac{\nu-1}{1+3\nu}}}{(L_k^{s+1})^{\tfrac{1+\nu}{1+3\nu}}}
\le
\tfrac{(\sqrt{2})^{-\tfrac{\nu-1}{1+3\nu}}(\gamma_k^s)^{\tfrac{\nu-1}{1+3\nu}}}{(2L_k^s)^{\tfrac{1+\nu}{1+3\nu}}}
=
\tfrac{(\gamma_k^s)^{\tfrac{\nu-1}{1+3\nu}}}{\sqrt{2}(L_k^s)^{\tfrac{1+\nu}{1+3\nu}}}
=
\tfrac{\gamma_k^s}{\sqrt{2}(\Gamma_k^s)^{\tfrac{1+\nu}{1+3\nu}}},
\end{aligned}
\end{equation} }
where the inequality follows from the negative exponent
\(
\tfrac{\nu-1}{1+3\nu}
\).
Combining \eqref{eq:sum_T_bound_in_down_up_searching_L} with the geometric decay in \eqref{eq:gamma_over_Gamma_change}, we obtain for all $k\ge1$,
{\small
\begin{align}\label{eq:universal_sumT_bound}
 \sum\nolimits_{s=1}^{S_k}T_k^s
\le
\tfrac{2}{(\tfrac{1}{2}\overline{M}_{\nu,\varepsilon}^{-1})^{\tfrac{1+\nu}{1+3\nu}}}
 (\tfrac{\sqrt{2}}{\sqrt{2}-1} )
\tfrac{\gamma_k^1}{(\Gamma_k^1)^{\tfrac{1+\nu}{1+3\nu}}}
=
\tfrac{2}{(\tfrac{1}{2}\overline{M}_{\nu,\varepsilon}^{-1})^{\tfrac{1+\nu}{1+3\nu}}}
 (\tfrac{\sqrt{2}}{\sqrt{2}-1} )
\tfrac{(\gamma_k^1)^{\tfrac{\nu-1}{1+3\nu}}}{(L_k^1)^{\tfrac{1+\nu}{1+3\nu}}}.
\end{align}
}
Specifically, we have $\gamma_1^1=1$ and $L_1^1=L_0$, hence \eqref{eq:universal_sumT_bound} reduces to \eqref{eq:universal_sumT_bound_k1}.
Moreover, for $k\ge2$, utilizing \eqref{eq:universal_sumT_bound}, relations \(L_k^1=L_{k-1}^{S_{k-1}}\) enforced by Algorithm~\ref{alg:universal_linesearch}, $\gamma_{k-1}^{S_{k-1}} 2/(1+\sqrt{5}) \le
\gamma_k^1$ by  Lemma~\ref{thm:recursion_property}\ref{item:lambda_same_E_ratio},
together with
$\Gamma_k^s = L_k^s(\gamma_k^s)^2$,
we have for $k\ge2$
{\small
\begin{align}\label{eq:universal_sumT_bound_kge2}
\begin{aligned}
 \sum\nolimits_{s=1}^{S_k}T_k^s
&\le
\tfrac{2}{(\tfrac{1}{2}\overline{M}_{\nu,\varepsilon}^{-1})^{\tfrac{1+\nu}{1+3\nu}}}
 (\tfrac{\sqrt{2}}{\sqrt{2}-1} )
 (\tfrac{2}{1+\sqrt{5}} )^{\tfrac{\nu-1}{1+3\nu}}
\tfrac{\gamma_{k-1}^{S_{k-1}}}{(\Gamma_{k-1}^{S_{k-1}})^{\tfrac{1+\nu}{1+3\nu}}}.
\end{aligned}
\end{align}
}
Last, we prove \eqref{eq:Gamma_bound} by creating a relation between $ \sum\nolimits_{s=1}^{S_k}T_k^s$, $k\ge2$ and $\Gamma_k^{S_k}$ sequence.
Define a placeholder \(1/\Gamma_0^{S_0}:=0\).
By  Lemma~\ref{thm:recursion_property}\ref{item:lambda_in_01_Lambda_decrease}, we have
\(
0<\Gamma_k^{S_k}<\Gamma_{k-1}^{S_{k-1}},
\)
which implies that for all  $k\ge1$,
{\small 
\begin{equation}\label{eq:Gammak_Gammakm1}
(
\tfrac{1}{(\Gamma_k^{S_k})^{\tfrac{1+\nu}{1+3\nu}}}
-
\tfrac{1}{(\Gamma_{k-1}^{S_{k-1}})^{\tfrac{1+\nu}{1+3\nu}}}
 )
 (
\tfrac{1}{(\Gamma_k^{S_k})^{\tfrac{2\nu}{1+3\nu}}}
+
\tfrac{1}{(\Gamma_{k-1}^{S_{k-1}})^{\tfrac{2\nu}{1+3\nu}}}
 )
>
\tfrac{1}{\Gamma_k^{S_k}}-\tfrac{1}{\Gamma_{k-1}^{S_{k-1}}}.
\end{equation}}
Utilizing the relations in \eqref{eq:universal_sumT_bound_kge2}, \eqref{eq:Gammak_Gammakm1}, \eqref{eq:AGamma_recur} and $\Gamma_k^{S_k}<\Gamma_{k-1}^{S_{k-1}}$ yields
{\small
\begin{align}\label{eq:universal_sumT_Gamma_sequence_kge2}
\begin{aligned}
&1/(\Gamma_k^{S_k})^{\tfrac{1+\nu}{1+3\nu}}
-
1/(\Gamma_{k-1}^{S_{k-1}})^{\tfrac{1+\nu}{1+3\nu}}
\ge
\tfrac{1}{2}\gamma_k^{S_k}/(\Gamma_k^{S_k})^{\tfrac{1+\nu}{1+3\nu}}
\\
\ge&
 ( \sum\nolimits_{s=1}^{S_{k+1}}T_{k+1}^s )
\tfrac{1}{4}\,
2^{-\tfrac{1+\nu}{1+3\nu}}
\overline{M}_{\nu,\varepsilon}^{-\tfrac{1+\nu}{1+3\nu}}
 (\tfrac{\sqrt{2}-1}{\sqrt{2}} )
 (\tfrac{2}{1+\sqrt{5}} )^{\tfrac{1-\nu}{1+3\nu}}, \forall k\ge1.
\end{aligned}
\end{align}
}
Hence, by utilizing \eqref{eq:universal_sumT_Gamma_sequence_kge2} and the placeholder $1/\Gamma_0^{S_0}=0$, we have 
{\small
\begin{align*}
    1/(\Gamma_{N-1}^{S_{N-1}})^{\tfrac{1+\nu}{1+3\nu}}
    =&
     \sum\nolimits_{k=1}^{N-1}
    1/(\Gamma_k^{S_k})^{\tfrac{1+\nu}{1+3\nu}}
    -
    1/(\Gamma_{k-1}^{S_{k-1}})^{\tfrac{1+\nu}{1+3\nu}}
    \\
    \ge&\,
     ( \sum\nolimits_{k=2}^N \sum\nolimits_{s=1}^{S_k}T_k^s )
    \tfrac{1}{4}\,
    2^{-\tfrac{1+\nu}{1+3\nu}}
    \overline{M}_{\nu,\varepsilon}^{-\tfrac{1+\nu}{1+3\nu}}
     (\tfrac{\sqrt{2}-1}{\sqrt{2}} )
     (\tfrac{2}{1+\sqrt{5}} )^{\tfrac{1-\nu}{1+3\nu}}.
\end{align*}
}
Then, \eqref{eq:Gamma_bound} follows by rearranging terms.
\end{proof}
\begin{proof}[Proof of Lemma~\ref{thm:additional_feval_dueto_M}]
    Fix any $k\ge1$ and $s\ge1$. By the description of Subroutine~\ref{alg:universal_subroutine_linesearch} and the fact $c_k^{t,s}\ge1$ by Lemma~\ref{thm:recursion_property}\ref{item:c_def}, we have
    $M_k^{t,s}= 2^{i_k^{t,s}}c_k^{t-1,s}M_k^{t-1,s}\ge 2^{i_k^{t,s}}M_k^{t-1,s}$, i.e., $i_k^{t,s}\le \log_2(M_k^{t,s}/M_k^{t-1,s})$,  for all $t\ge1$. Moreover, the description in Subroutine~\ref{alg:universal_subroutine_linesearch} and Lemma~\ref{thm:recursion_property}\ref{item:lambda_Lambda_E} and \ref{item:c_def} indicate that  $i_k^{T_k^s,s}=0$. So the number of (sub)gradient evaluations of $f$ from $t=1$ to $T_k^s$ is
        \(
         \sum\nolimits_{t=1}^{T_k^s}(i_k^{t,s}+1)
        =
         [ \sum\nolimits_{t=1}^{T_k^s-1}(i_k^{t,s}+1)  ]+1
        \le
        T_k^s + \log_2(M_k^{T_k^s-1,s}/M_k^{0,s}).
        \)
        Hence, after running $N$ iterations in Algorithm~\ref{alg:universal_linesearch}, the number of (sub)gradient evaluations of $f$ is
$
 \sum\nolimits_{k=1}^N \sum\nolimits_{s=1}^{S_k} \sum\nolimits_{t=1}^{T_k^s} (i_k^{t,s}+1 )
         \le \sum\nolimits_{k=1}^N \sum\nolimits_{s=1}^{S_k}T_k^s
         +
          \sum\nolimits_{k=1}^N \sum\nolimits_{s=1}^{S_k} \log_2(M_k^{T_k^s-1,s}/M_k^{0,s}).
$
It remains to bound the logarithmic term on the right-hand side of the above relation. 
Notice that if $T_k^s=1$, then $\log_2M_k^{T_k^s-1,s}/M_k^{0,s}=0$, so we only need to handle the case when $T_k^s\ge2$. By the upper bound on \(M_k^{t,s}\) established in Proposition~\ref{thm:universal_estimates_bound} and the relation \(A_k^{t,s}=M_k^{t,s}(\alpha_k^{t,s})^2\), we obtain
\(
    A_k^{T_k^s-1,s}/ (\alpha_k^{T_k^s-1,s} )^2
    =
    M_k^{T_k^s-1,s}
    \le
    2\,\overline{M}_{\nu,\varepsilon}
     (\gamma_k^s\alpha_k^{T_k^s-1,s} )^{-\tfrac{1-\nu}{1+\nu}},
\)
equivalently,
$
     (\tfrac{1}{\alpha_k^{T_k^s-1,s}} )^2
    \le
     (\tfrac{1}{A_k^{T_k^s-1,s}} )^{\tfrac{2(1+\nu)}{3\nu+1}}
     (2\,\overline{M}_{\nu,\varepsilon} )^{\tfrac{2(1+\nu)}{3\nu+1}}
     (\tfrac{1}{\gamma_k^s} )^{\tfrac{2(1-\nu)}{3\nu+1}}.
$
Then, by utilizing $A_k^{t,s} = M_k^{t,s}(\alpha_k^{t,s})^2$, the above inequality, relation $A_k^{T_k^s-1,s}\ge L_k^s$ guaranteed by Proposition~\ref{thm:Ak1s_ge_Lks}, $\Gamma_k^s=L_k^s(\gamma_k^s)^2$ in order, we have for $k\ge1$ and $s\ge1$,
{\small
\begin{equation}\label{eq:M_Tksm1_bound}
\begin{split}
    &\tfrac{M_k^{T_k^s-1,s}}{(2\,\overline{M}_{\nu,\varepsilon} )^{\tfrac{2(1+\nu)}{3\nu+1}}}
    =
    \tfrac{A_k^{T_k^s-1,s}}{(2\,\overline{M}_{\nu,\varepsilon} )^{\tfrac{2(1+\nu)}{3\nu+1}} (\alpha_k^{T_k^s-1,s} )^2}
    \le
     (\tfrac{1}{A_k^{T_k^s-1,s}} )^{\tfrac{1-\nu}{3\nu+1}}
     (\tfrac{1}{\gamma_k^s} )^{\tfrac{2(1-\nu)}{3\nu+1}}
\\
\le & 
     (\tfrac{1}{L_k^s(\gamma_k^s)^2} )^{\tfrac{1-\nu}{3\nu+1}}
    = 
     (\tfrac{1}{\Gamma_k^s} )^{\tfrac{1-\nu}{3\nu+1}}    
     \le
     (\tfrac{k^2}{L_0} )^{\tfrac{1-\nu}{3\nu+1}}.
\end{split}
\end{equation}
 }
Here the last inequality follows from
$\Gamma_k^s\ge L_1/k^2$, for $k\ge2$, by
Lemma~\ref{thm:recursion_property}\ref{item:Gamma_k_ge_L1_over_ksq} and $L_1\ge L_0$ by Algorithm~\ref{alg:universal_linesearch};
for $k=1$, it follows from $\Gamma_1^s=L_1^s\ge L_0$.
Moreover, by the relation  $M_k^{0,s}\ge L_k^s\ge L_0$ ensured in Algorithm~\ref{alg:universal_linesearch}, we obtain for $k\ge1$ and $s\ge1$,
\(
    1/M_k^{0,s}
\le
1/L_0.
\)
Combining the above relations, we have for all $k\ge1$ and $s\ge1$, 
\(
\log_2 M_k^{T_k^s-1,s}/M_k^{0,s}
\le
\tfrac{2(1+\nu)}{3\nu+1}
    \log_2 ((\tfrac{2 \overline{M}_{\nu,\varepsilon}}{L_0})k^{\tfrac{1-\nu}{1+\nu}} ).
    \)
\end{proof}

\begin{proof}[Proof of Proposition~\ref{thm:universal_GammaA_bound_final_version}]
Algorithm~\ref{alg:universal_linesearch_decreas_increase} differs from
Algorithm~\ref{alg:universal_linesearch} only in linesearch order used in the first iteration. Therefore, the proof largely follows that of
Proposition~\ref{thm:universal_GammaA_bound}; we only highlight the necessary modifications.
At $k=1$, $\gamma_1^s=1$, so $\Gamma_1^{s+1}=L_1^{s+1}=L_1^{s}/2=\Gamma_1^{s}/2$, for all $s\ge1$.
Then, it follows that 
\(
\gamma_1^{s}/ (\Gamma_1^{s} )^{\tfrac{1+\nu}{1+3\nu}}
=
\gamma_1^{s+1}/ (2\Gamma_1^{s+1} )^{\tfrac{1+\nu}{1+3\nu}}.
\)
From this geometric decay and \eqref{eq:T_ks_bound_by_gammaGamma}, it follows that 
{\small
\begin{align}\label{eq:pfugs_sumT1s}
\begin{aligned}
    \sum\nolimits_{s=1}^{S_1+1}T_1^s
    \le&
     (2/(\tfrac{1}{2}\overline{M}_{\nu,\varepsilon}^{-1})^{\tfrac{1+\nu}{1+3\nu}} )
\sum\nolimits_{s=1}^{S_1+1}\gamma_1^s/ (\Gamma_1^s )^{\tfrac{1+\nu}{1+3\nu}}
    \\
    \le&
     (\tfrac{2}{(\tfrac{1}{2}\overline{M}_{\nu,\varepsilon}^{-1})^{\tfrac{1+\nu}{1+3\nu}}} )
     (
\tfrac{1}{1-(2)^{-\tfrac{1+\nu}{1+3\nu}}}
+
2^{\tfrac{1+\nu}{1+3\nu}}
 )\tfrac{\gamma_1^{S_1}}{ (\Gamma_1^{S_1} )^{\tfrac{1+\nu}{1+3\nu}}}.
 \end{aligned}
\end{align} 
}
Using \eqref{eq:pfugs_sumT1s} and the fact $\gamma_1^s=1$, we have that
\(
1 /(\Gamma_1^{S_1} )^{\tfrac{1+\nu}{1+3\nu}}
\ge
\tfrac{ \sum\nolimits_{s=1}^{S_1+1} T_1^s}{2}
 (2^{-\tfrac{2+2\nu}{1+3\nu}}-2^{-\tfrac{3+3\nu}{1+3\nu}} )
\overline{M}_{\nu,\varepsilon}^{-\tfrac{1+\nu}{1+3\nu}}.
\)
Then, with the help of placeholder $\tfrac{1}{\Gamma_0^{S_0}}=0$ and \eqref{eq:universal_sumT_Gamma_sequence_kge2}, we have
{\small
\begin{align*}
    &2/ (\Gamma_{N-1}^{S_{N-1}} )^{\tfrac{1+\nu}{1+3\nu}}
    =
    2/ (\Gamma_1^{S_1} )^{\tfrac{1+\nu}{1+3\nu}} + 2 \sum\nolimits_{k=2}^{N-1}  1/ (\Gamma_k^{S_k} )^{\tfrac{1+\nu}{1+3\nu}}-1/ (\Gamma_{k-1}^{S_{k-1}} )^{\tfrac{1+\nu}{1+3\nu}}
   \\
   \ge &
    1/ (\Gamma_1^{S_1} )^{\tfrac{1+\nu}{1+3\nu}} +  \sum\nolimits_{k=1}^{N-1} 1/ (\Gamma_k^{S_k} )^{\tfrac{1+\nu}{1+3\nu}}-1/ (\Gamma_{k-1}^{S_{k-1}} )^{\tfrac{1+\nu}{1+3\nu}}
   \\
   \ge&
(\tfrac{2-\sqrt2}{8})
(2)^{-\tfrac{2\nu}{1+3\nu}}
(1+\sqrt5)^{-\tfrac{1-\nu}{1+3\nu}}
\overline{M}_{\nu,\varepsilon}^{-\tfrac{1+\nu}{1+3\nu}}  ( \sum\nolimits_{s=1}^{S_1+1}T_1^s+ \sum\nolimits_{k=2}^N \sum\nolimits_{s=1}^{S_k}T_k^s ).
\end{align*}}
\end{proof}
\begin{proof}[Proof of Lemma~\ref{thm:additional_feval_dueto_M_alg_final_version}]
Since this proof closely follows that of Lemma~\ref{thm:additional_feval_dueto_M};
we only highlight the necessary modifications.
By similar arguments in the proof of Lemma~\ref{thm:additional_feval_dueto_M} and
noticing $T_1^1=1$,
to complete $N$ iterations in Algorithm~\ref{alg:universal_linesearch_decreas_increase}, the number of evaluations of $f'$ is at most 
{\small
\begin{align}\label{eq:pfugs_total_feval}
\begin{aligned}
&1
+
 \sum\nolimits_{s=2}^{S_1+1} \sum\nolimits_{t=1}^{T_1^s} (i_1^{t,s}+1 )
+
 \sum\nolimits_{k=2}^N \sum\nolimits_{s=1}^{S_k} \sum\nolimits_{t=1}^{T_k^s} (i_k^{t,s}+1 )
\\
=&
 \sum\limits_{s=2}^{S_1+1}
\log_2\tfrac{M_1^{T_1^{s}-1,s}}{M_1^{0,s}}
+
 \sum\limits_{s=1}^{S_1+1}T_1^{s}
        +
         \sum\limits_{k=2}^N \sum\limits_{s=1}^{S_k}T_k^s
         +
          \sum\limits_{k=2}^N \sum\limits_{s=1}^{S_k} \log_2\tfrac{M_k^{T_k^s-1,s}}{M_k^{0,s}}.
\end{aligned}
\end{align}
}
Here the single $1$ in \eqref{eq:pfugs_total_feval} represents the number of evaluations of $f'$ needed at $k=1$, $s=1$. This is true because that $\xlb_1^{1,1} = x_0$ and hence only $f'(x_0)$ is evaluated at $k=1$ and $s=1$.
By similar arguments in \eqref{eq:M_Tksm1_bound}, we have 
$M_k^{T_k^s-1,s}
\le 
    (2\,\overline{M}_{\nu,\varepsilon} )^{\tfrac{2(1+\nu)}{3\nu+1}}
     (\tfrac{1}{\Gamma_k^s} )^{\tfrac{1-\nu}{3\nu+1}}    
     \le
 (2\,\overline{M}_{\nu,\varepsilon} )^{\tfrac{2(1+\nu)}{3\nu+1}}
     (\tfrac{2k^2}{L_1} )^{\tfrac{1-\nu}{3\nu+1}}.
$
The last inequality above holds for $k\ge2$, $s\ge1$ due to $\Gamma_k^s\ge L_1/k^2 \ge L_1/(2k^2)$ by Lemma~\ref{thm:recursion_property}\ref{item:Gamma_k_ge_L1_over_ksq} and holds for $k=1$, $s\le S_1+1$ since $\Gamma_1^s=L_1^s\ge L_1/2$  ensured by Algorithm~\ref{alg:universal_linesearch_decreas_increase}. 
From the above relation and the fact $1/M_k^{0,s}\le 1/L_k^s \le 2/L_1$, we have
{\small
\begin{equation}\label{eq:universal_log_over_log}    
\log_{2}\tfrac{M_k^{T_k^s-1,s}}{M_k^{0,s}}
\le 
    \tfrac{2(1+\nu)}{3\nu+1}
    \log_2 ((\tfrac{4 \overline{M}_{\nu,\varepsilon}}{L_1})k^{\tfrac{1-\nu}{1+\nu}} ), \;\forall k,s  
\end{equation}}
This completes the proof.
\end{proof}

\end{document}